\tikzset{snake it/.style={decorate, decoration=snake}}
\tikzset{zigzag/.style={decorate, decoration=zigzag}}
\definecolor{ao(english)}{rgb}{0.0, 0.5, 0.0}
\definecolor{darkgreen}{rgb}{0.0, 0.5, 0.0}%{green!50!black}
\newcommand{\sgnlamu}{{\sf sgn}(\la,\mu)}
\newcommand{\sgnnula}{{\sf sgn}(\nu,\la)}
\newcommand{\sgnalla}{{\sf sgn}(\alpha,\la)} 
\newcommand{\sgnnual}{{\sf sgn}(\nu,\alpha)} 
\newcommand{\sgnalnu}{{\sf sgn}(\alpha,\nu)}
\newcommand{\sgnnumu}{{\sf sgn}(\nu,\mu)}
	\definecolor{eng}{rgb}{0.0, 0.5, 0.0}
\definecolor{apple}{rgb}{0.55, 0.71, 0.0}
\definecolor{cadmium}{rgb}{0.0, 0.42, 0.24}
\definecolor{darkspringgreen}{rgb}{0.09, 0.45, 0.27}
\definecolor{amethyst}{rgb}{0.6, 0.4, 0.8}
\definecolor{ao}{rgb}{0.0, 0.0, 1.0}
\definecolor{atomictangerine}{rgb}{1.0, 0.6, 0.4}
\definecolor{carmine}{rgb}{0.59, 0.0, 0.09}
\definecolor{toggle}{rgb}{1.0, 0.94, 0.96}
 \newcommand{\csigma}{{{{\color{magenta}\bm\sigma}}}}
 \newcommand{\ctau}{{{{\color{cyan}\bm\tau}}}}
 \newcommand{\al}{{{{\color{magenta}\bm \sigma}}}}
 \newcommand{\crho}{{{{ \color{green!80!black}\bm\rho}}}}
\newcommand{\w}{{\underline{w}}}
\newcommand{\vvv}{{\underline{v} }}
\newcommand{\y}{{\underline{y}}}
\newcommand{\x}{{\underline{x}}}
\def\down{\vee}
\def\up{\wedge}
\tikzset{
  variable line width/.style={
    every variable line width/.append style={#1},
    to path={%
      \pgfextra{%
        \draw[every variable line width/.try,line width=\pgfkeysvalueof{/tikz/thickness}] (\tikztostart) -- (\tikztotarget);
      }%
      (\tikztotarget)
    },
  },
  thickness/.initial=0.6pt,
  every variable line width/.style={line cap=round, line join=round},
}
\newlength{\superthick}
\newlength{\cornerradius}
\tikzstyle{corner}=[rounded corners=\cornerradius]
\tikzstyle{dot}=[circle, inner sep=0pt, minimum size=4.8pt]
\tikzstyle{string}=[line width=\superthick]
\tikzstyle{std}=[string,dash pattern=on 0.9pt off 0.9pt]
\definecolor{realcyan}{rgb}{0,1,1}
\mathchardef\mhyphen="2D
\definecolor{mediumblue}{rgb}{0.0, 0.0, 0.8}
\newcommand{\mptn}{{\mathscr{P}_{m,n}}}
\renewcommand{\geq}{\geqslant}
\renewcommand{\leq}{\leqslant}
\tikzset{wei/.style= 
{red,double=red,double
distance=0.5pt}}
 \newcommand{\bet}{{{\color{cyan}\boldsymbol \tau}}}
\tikzset{wei2/.style={red,double=red,double
distance=0.5pt}}
\numberwithin{equation}{section}
\newtheorem{thm}{Theorem}[section]
\newtheorem{cor}[thm]{Corollary}
\newtheorem{lem}[thm]{Lemma}
\newtheorem{prop}[thm]{Proposition}
\newtheorem*{prop*}{Proposition}
\newtheorem*{thmA}{Theorem A}
\newtheorem*{thmB}{Theorem B}
\newtheorem*{thmC}{Theorem C}
\newtheorem*{cor*}{Corollary}
\newtheorem*{conj*}{Conjecture D}
\newtheorem*{conj1*}{Conjecture B}
\newtheorem*{Acknowledgements*}{Acknowledgements}
\theoremstyle{rmk}
\theoremstyle{defn}
\newtheorem{rmk}[thm]{Remark}
\newtheorem{defn}[thm]{Definition}
\newtheorem{eg}[thm]{Example}
\newcommand{\Std}{{\rm Std}}
\newcommand{\la}{\lambda}
\newcommand{\sts}{\mathsf{s}}  %standard index
\newcommand{\stt}{\mathsf{t}}  %standard index
\newcommand{\stu}{\mathsf{u}}  %standard index
\newcommand{\ZZ}{{\mathbb Z}}
\newcommand{\NN}{{\mathbb N}}
\tikzset{
ultra thin/.style= {line width=0.05pt},
very thin/.style=  {line width=0.2pt},
thin/.style=       {line width=0.1pt},
semithick/.style=  {line width=0.6pt},
thick/.style=      {line width=0.8pt},
very thick/.style= {line width=1.2pt},
ultra thick/.style={line width=1.6pt}
}
\crefname{ques}{Question}{Questions}
\crefname{defn}{Definition}{Definitions}
\crefname{thm}{Theorem}{Theorems}
\crefname{prop}{Proposition}{Propositions}
\crefname{lem}{Lemma}{Lemmas}
\crefname{cor}{Corollary}{Corollaries}
\crefname{conj}{Conjecture}{Conjectures}
\crefname{section}{Section}{Sections}
\crefname{subsection}{Subsection}{Subsections}
\crefname{eg}{Example}{Examples}
\crefname{figure}{Figure}{Figures}
\crefname{rem}{Remark}{Remarks}
\crefname{rmk}{Remark}{Remarks}
\crefname{equation}{equation}{equation}
\Crefname{ques}{Question}{Questions}
\Crefname{defn}{Definition}{Definitions}
\Crefname{thm}{Theorem}{Theorems}
\Crefname{prop}{Proposition}{Propositions}
\Crefname{lem}{Lemma}{Lemmas}
\Crefname{cor}{Corollary}{Corollaries}
\Crefname{conj}{Conjecture}{Conjectures}
\Crefname{section}{Section}{Sections}
\Crefname{subsection}{Subsection}{Subsections}
\Crefname{eg}{Example}{Examples}
\Crefname{figure}{Figure}{Figures}
\Crefname{rem}{Remark}{Remarks}
\Crefname{rmk}{Remark}{Remarks}
\begin{document}

 \title[Hecke categories and Khovanov arc algebras]{
Quiver presentations and isomorphisms \\  of Hecke categories and Khovanov arc algebras 
  }

 \author{Chris Bowman}
       \address{Department of Mathematics, 
University of York, Heslington, York,  UK}
\email{Chris.Bowman-Scargill@york.ac.uk}
  
 \author{Maud De Visscher}
	\address{Department of Mathematics, City, University of London,   London, UK}
\email{Maud.DeVisscher.1@city.ac.uk}

		\author{Amit  Hazi}
	 
     \address{Department of Mathematics, 
University of York, Heslington, York,  UK}
\email{Amit.Hazi@york.ac.uk}

		\author{ Catharina Stroppel}
 \address{ Mathematical Institute, Endenicher Allee 60, 53115 Bonn}
 \email{Stroppel@math.uni-bonn.de}
 
 \maketitle

 \vspace{-0.5cm}
\begin{abstract}
We prove that the extended  Khovanov arc algebras are isomorphic to the basic algebras of 
anti-spherical Hecke categories for maximal   parabolics of symmetric groups.  
We present these algebras by  quiver and relations     and provide the full submodule lattices of Verma modules. 
% Through earlier work,  this has applications  for   the categories of  
% rational representations of the supergroup ${\rm GL}_{m| n}(\mathbb C)$, 
%  perverse sheaves on Grassmannians, walled Brauer algebras and
%certain  quiver Schur algebras.  
  \end{abstract}

\section{Introduction}

A fundamental notion of categorical Lie theory is that 
of uniqueness.
  If a pair of 2-categorical objects share the  same underlying  Kazhdan--Lusztig theory, then they ``should  be  
   the same"   --- a striking example of this is given by the
$\ZZ$-graded    algebra isomorphisms between KLR algebras and diagrammatic Soergel bimodules
 conjectured in \cite{MR4100120} and proven in \cite{cell4us2}. 
Our starting point for this paper is the   simplest family of  ($p$-)Kazhdan--Lusztig polynomials --- those given by  oriented Temperley--Lieb diagrams, or equivalently, Dyck tilings. These combinatorial objects underly the (extended) Khovanov arc algebras \cite{MR2918294} and the 
  anti-spherical 
Hecke categories associated to maximal    parabolics   of   symmetric groups \cite{compan}.
 The first theorem  of this paper    explains this coincidence via  an  explicit and elementary 
 $\ZZ$-graded      algebra isomorphism (see Theorem~A below) in the spirit of \cite{MR4100120,cell4us2}.
 
 By a theorem of Gabriel, any basic algebra over a field is isomorphic to the path algebra of its ${\rm Ext}$-quiver, modulo relations.  Such presentations are one of the ``holy grails" of representation theory
--- they essentially provide complete information about the structure of an algebra.  
%Such presentations are also, unsurprisingly,  incredibly rare --- the only known examples of such presentations anywhere in Lie theory are for finite Weyl groups of ranks 2 and 3 \cite{MR2017061} and biserial algebras (see \cref{example-biserial} of this paper).
  Our second main theorem of this paper provides such a presentation for the basic algebras of 
anti-spherical  Hecke categories of 
  maximal    parabolics   of   symmetric groups (Theorem~B). 
  We use this presentation to obtain complete submodule lattices for the Verma modules   (Theorem~C).
  
The results of this paper are mostly self-contained, with elementary proofs which work over any integral domain $\Bbbk$ containing  $i \in \Bbbk$ such that  $i^2=-1$. 
%   In particular, we do not require deep categorical  
%  statements such as the proof of the light leaves basis \cite{MR3555156,MR2441994}
%   and nor do we assume strong structural properties such as Koszulity \cite{MR2600694}.  
% Through   Theorem A and results in the literature, we deduce 
% full quiver and relations descriptions of the  projective indecomposable  modules
% for   the categories of   
%  rational representations of the supergroup ${\rm GL}_{m| n}(\mathbb C)$ \cite{MR2881300}, 
%categories $\mathscr{O}$  of  perverse sheaves on Grassmannians \cite{MR2781018}, walled Brauer algebras \cite{MR2955190} and
%certain  quiver Schur algebras \cite{MR3356809}.  This makes these objects the most well-understood %infinite
%  families in all of  non-semisimple Lie theory.

\subsection{The isomorphism theorem}

 The (extended) Khovanov arc algebras $\mathcal{K} _{m,n}$ for $m,n\in \NN$ 
have their origins in 2-dimensional topological quantum field theory and their first applications were in  categorical knot theory  \cite{MR1740682,MR2521250}.  
These algebras have subsequently been studied from the point of view of 
Springer varieties \cite{MR1928174},
their cohomological and representation theoretic structure  \cite{MR2600694,MR2781018,MR2955190,MR2881300,BarWang},
and symplectic geometry \cite{MR4422212}
and  have further inspired much generalisation: from the Temperley--Lieb setting to web diagrams \cite{web1,web2,web3,web4}
and also  from the ``even" setting to    ``super"   \cite{odd1} and  ``odd" settings \cite{odd2}, as well as to the orthosymplectic case \cite{MR3518556,MR3563723,MR3644792}.
In summary, these algebras form the prototype for  knot-theoretic categorification, we   refer to Stroppel's 2022 ICM address for more details 
\cite{ICM1}.  

On the other hand, anti-spherical Hecke categories of parabolic Coxeter systems $(W,P)$ provide   the universal setting for studying the interaction between Kazhdan--Lusztig theory and categorical Lie theory --- they have formed the crux of the resolutions of the Jantzen, Lusztig, and Kazhdan--Lusztig positivity conjectures \cite{MR3689943,w13,MR3245013} and control much of the representation theory of algebraic groups and braid groups \cite{cell4us2,BNS2,MR3868004,MR4454848,ELpaper}. 
We refer to   Williamson's 2018 ICM address for a more complete history and the geometric motivation for their study 
\cite{MR3966750}.

Our first main theorem bridges the gap between these two distinct categorical worlds: 

\begin{thmA}%Let $\Bbbk$ be a field.  
The extended Khovanov arc algebras $\mathcal{K} _{m,n}$  are isomorphic (as   $\ZZ$-graded $\Bbbk$-algebras)
to the basic algebras $\mathcal{H}_{m,n} $ of the  anti-spherical  Hecke categories associated to the maximal parabolics of symmetric groups $(S_{m+n},S_m \times S_n)$ for all $m,n\in\NN$.
\end{thmA}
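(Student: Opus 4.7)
My plan is to exhibit an explicit $\ZZ$-graded $\Bbbk$-algebra isomorphism $\Phi\colon \mathcal{K}_{m,n} \to \mathcal{H}_{m,n}$ directly, rather than going through an equivalence of module categories. The first observation is that both algebras carry a complete set of primitive orthogonal idempotents indexed by the same combinatorial set: the weight sequences in $\{\wedge,\vee\}^{m+n}$ with exactly $m$ down arrows, equivalently the minimal length coset representatives of $S_{m+n}/(S_m\times S_n)$. This matches the vertices of the two $\mathrm{Ext}$-quivers, and by the coincidence of $(p)$-Kazhdan--Lusztig polynomials noted in the introduction (oriented Temperley--Lieb diagrams, or Dyck tilings, on both sides), the graded Cartan matrices of the two basic algebras already agree.

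The next step is to match bases. For each ordered pair $(\lambda,\mu)$ of weights, Khovanov's algebra has $e_\lambda \mathcal{K}_{m,n} e_\mu$ spanned by the oriented circle diagrams built by concatenating the cup diagram of $\lambda$ and the cap diagram of $\mu$ and orienting consistently. On the Hecke side, one expects to use an anti-spherical double leaves basis of Elias--Williamson--Libedinsky type. I would set up a degree-preserving bijection between these two indexing sets by reading each oriented circle diagram as a recipe for assembling a double leaves morphism cup-by-cup and cap-by-cap, with the orientation data recording the dot placements, and then define $\Phi$ to be the associated $\Bbbk$-linear extension.

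The main obstacle is to verify that $\Phi$ intertwines the two multiplications. Khovanov's multiplication is given by an explicit surgery procedure on circle diagrams: each cup/cap pair along the shared boundary is resolved as either a merge or a split of circles, subject to a precise sign rule for the orientations. The task is to show that the composition of the corresponding Hecke diagrams reduces, under the defining relations of the anti-spherical quotient, to exactly that surgery rule. Because surgery is local near a single cup/cap pair, this reduces to a finite case analysis, whose cases correspond to the elementary ``diamond'' configurations that govern the Hecke diagrammatics (and which the paper's \emph{Classification of diamonds} is evidently designed to enumerate). The cleanest route --- which simultaneously establishes Theorem~B --- is to first derive a self-contained quiver-with-relations presentation of $\mathcal{H}_{m,n}$ adapted to this weight combinatorics and show that the relations are precisely those satisfied by the diagrammatic generators of $\mathcal{K}_{m,n}$; surjectivity of $\Phi$ will be visible at the level of generators, and injectivity will follow from the already-matched graded dimensions.
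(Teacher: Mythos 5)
Your overall route---first extract a quadratic presentation of $\mathcal{H}_{m,n}$ in terms of the weight/Dyck combinatorics, then map generators to arc diagrams and check relations, then close with a generation-plus-dimension argument---is in outline the route the paper takes, but two of your steps, as stated, would fail. The first is the claim that the relations of the presentation ``are precisely those satisfied by the diagrammatic generators of $\mathcal{K}_{m,n}$.'' They are not: the Hecke-side relations carry signs (for instance $(-1)^{b(P)-1}$ and $(-1)^{b(Q)}$ in the self-dual relation \eqref{selfdualrel}, and $(-1)^{b(\langle P\cup Q\rangle_\mu)-b(Q)}$ in the adjacent relation \eqref{adjacent}), whereas the surgery product in $\mathcal{K}_{m,n}$ is sign-free. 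Consequently neither the naive basis-to-basis linear extension nor the naive generator-to-generator assignment is an algebra homomorphism. The fix is a nontrivial rescaling: each degree-one generator $D^\mu_\la$ must be sent to $i^{{\sf sgn}(\la,\mu)}\,\underline{\mu}\la\overline{\la}$, where ${\sf sgn}(\la,\mu)$ is the average content of the removed Dyck path and $i^2=-1$; this is exactly why the theorem requires $\Bbbk$ to contain a square root of $-1$, a hypothesis absent from your proposal. Moreover, verifying the rescaled relations is not a purely local check ``near a single cup/cap pair'': the paper reduces, via dilation homomorphisms on both algebras, to the incontractible rectangular cases and then carries out genuinely global surgery computations on nested circles, braces and zigzags, keeping track of how ${\sf sgn}$ and the breadths $b(P),b(Q)$ change under dilation. (The ``Classification of diamonds'' environment you lean on is declared in the preamble but never used; it is not an available tool.)

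The second gap is the endgame. ``Surjectivity visible at the level of generators'' together with ``injectivity from the matched graded dimensions'' presupposes that $\mathcal{K}_{m,n}$ is generated in degrees $0$ and $1$, equivalently that the images of the light leaves basis elements form a $\Bbbk$-basis of $\mathcal{K}_{m,n}$; this is not automatic from a Cartan-matrix comparison. Over a field one could extract it from the known Koszulity of $\mathcal{K}_{m,n}$, but the theorem is asserted over an integral domain, where that shortcut is unavailable, and the paper proves it directly: every $\underline{\mu}\la\overline{\la}$ is a product of degree-one elements (repeated use of the local idempotent lemma), and $(\underline{\mu}\la\overline{\la})(\underline{\la}\la\overline{\nu})=\underline{\mu}\la\overline{\nu}+\sum_{\zeta<\la}\underline{\mu}\zeta\overline{\nu}$, the unitriangularity requiring a careful topological analysis showing that the annihilating surgery rules $x\otimes x\mapsto 0$, $x\otimes y\mapsto 0$, $y\otimes y\mapsto 0$ can never occur in such products. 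That argument is the substantive content hidden behind your dimension count, and without it (or an equally explicit substitute) the proposed proof of bijectivity does not go through.
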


Given the vast generalisations of Khovanov arc algebras  (in particular to the super world!) 
and of these anti-spherical Hecke categories (to all parabolic Coxeter systems) 
we hope that our main theorem will be the starting point of much reciprocal study of  these two worlds.

\subsection{Quiver and relations for   Hecke categories  }
It is well-known that ($p$-)Kazhdan--Lusztig polynomials encode a great deal of character-theoretic 
and indeed cohomological information about Verma modules (particularly if one puts certain restrictions on $p\geq 0$).  If the algebra is Koszul (as is the case for our algebras) we further know that the $p$-Kazhdan--Lusztig polynomials carry complete information about the radical layers of   indecomposable   projective and Verma modules.  
Given the almost ridiculous level of detail these polynomials encode, 
 it is natural to ask {\em``what are the limits to what $p$-Kazhdan--Lusztig combinatorics can tell us about the  structure of the Hecke category?"}

The starting point of this  paper is to delve deep  into the Dyck/Temperley--Lieb combinatorics for $p$-Kazhdan--Lusztig polynomials, which was initiated in 
    \cite{MR2918294,compan}.  There is a  wealth of extra, richer combinatorial information which can be encoded into the Dyck tilings underlying these
    $p$-Kazhdan--Lusztig 
     polynomials.  
     Instead of looking only at the sets of Dyck tilings 
     (which enumerate the $p$-Kazhdan--Lusztig 
     polynomials) we look at the  relationships for passing 
     between
     these Dyck tilings.  
     In fact, this ``meta-Kazhdan--Lusztig combinatorics"  
      is sufficiently rich as to completely determine the full structure of our Hecke categories:
       
 \begin{thmB}
% Let $\Bbbk$ be a field. The ${\rm Ext}$-quiver of  
The $\Bbbk$-algebra  $ \mathcal{H}_{m,n} $  admits a quadratic presentation as the path algebra of the ``Dyck quiver" $\mathscr{D}_{m,n}$ of \cref{quiverdefn}
 modulo ``Dyck-combinatorial relations" \eqref{rel1} to \eqref{adjacent}.
If  $\Bbbk$ is a field,  then  the ${\rm Ext}$-quiver of  
$\mathcal{H}_{m,n} 
$  is isomorphic  to $\mathscr{D}_{m,n}$ 
and  this gives a presentation of the algebra by quiver and relations.
  \end{thmB}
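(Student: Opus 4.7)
By Theorem~A it suffices to establish the presentation for the extended Khovanov arc algebra $\mathcal{K}_{m,n}$, whose diagrammatic calculus in terms of oriented cup--cap pairs gives very concrete access to idempotents, basis elements, and homogeneous degrees. The plan has four stages: (a) construct an algebra homomorphism
\[
\Phi \colon \Bbbk\mathscr{D}_{m,n}/I \longrightarrow \mathcal{K}_{m,n},
\]
where $I$ denotes the ideal generated by the Dyck-combinatorial relations; (b) verify those relations actually hold in $\mathcal{K}_{m,n}$; (c) show that $\Phi$ is a bijection, the injectivity being the delicate step; (d) deduce the Ext-quiver statement from the quadratic form of the relations.

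For (a), the vertices of $\mathscr{D}_{m,n}$ index the simple $\mathcal{K}_{m,n}$-modules and correspond to the primitive idempotents $e_\la$ attached to oriented cup diagrams. Each arrow $\la \to \mu$ of $\mathscr{D}_{m,n}$ is designed to encode a single elementary Dyck-tiling move; map it to the canonical basis element of $e_\mu \mathcal{K}_{m,n} e_\la$ realising the corresponding local arc-surgery. Surjectivity of $\Phi$ follows because every diagrammatic basis element of $\mathcal{K}_{m,n}$ factors through such elementary moves, which one sees by inducting on homogeneous degree and peeling off one cap/cup at a time. For (b), each relation is a local identity between two compositions of elementary moves supported in a bounded window of the weight diagram, so it reduces to a finite surgery calculation: either a commuting square of disjointly supported moves, a sign-controlled swap, or an annihilation arising when two overlapping surgeries produce an unorientable configuration.

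The main obstacle is (c), injectivity of $\Phi$. My approach would be to treat the listed quadratic relations as a rewriting system on paths in $\mathscr{D}_{m,n}$ and to produce, via a diamond-lemma/confluence analysis of all length-three overlaps, a canonical spanning set for $\Bbbk\mathscr{D}_{m,n}/I$ whose elements are indexed by the same combinatorial gadgets (pairs of Dyck tilings with a common top/bottom profile) that enumerate the diagrammatic $\Bbbk$-basis of $\mathcal{K}_{m,n}$. Equivalently, one uses the relations to bound the graded dimension of $\Bbbk\mathscr{D}_{m,n}/I$ termwise above by $\mathrm{gdim}\,\mathcal{K}_{m,n}$ (the latter being known from the Dyck-tiling formula for $p$-Kazhdan--Lusztig polynomials), and then combines this upper bound with the surjectivity established in (a) to force equality in every graded piece. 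I expect this dimension/confluence step, rather than the relation checking, to absorb most of the combinatorial work of the paper, exactly as happens in the analogous KLR and diagrammatic Soergel presentations.

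Once $\Phi$ is known to be an isomorphism, (d) is immediate. The Dyck-combinatorial relations are quadratic and in particular lie in the square of the arrow ideal, so the arrows of $\mathscr{D}_{m,n}$ project to a $\Bbbk$-basis of $\mathrm{rad}\,\mathcal{H}_{m,n}/\mathrm{rad}^2\,\mathcal{H}_{m,n}$; when $\Bbbk$ is a field, this space is canonically dual to $\bigoplus_{\la,\mu}\mathrm{Ext}^1(L(\la),L(\mu))$. Hence $\mathscr{D}_{m,n}$ coincides with the Gabriel quiver of $\mathcal{H}_{m,n}$, and the presentation produced in (a)--(c) is automatically a minimal presentation by quiver and relations.
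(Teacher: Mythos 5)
Your route runs through Theorem~A in a way that is circular in the context of this paper: here Theorem~A is \emph{deduced from} Theorem~B (the isomorphism $\Psi$ is defined on the generators of the presentation, the relations \eqref{rel1}--\eqref{adjacent} are checked to be preserved, and bijectivity is then forced), so you cannot invoke it to reduce Theorem~B to a statement about $\mathcal{K}_{m,n}$ without supplying an independent proof of the isomorphism. No such independent proof is available at the level of generality of Theorem~B: over a field one could lean on Koszulity of $\mathcal{K}_{m,n}$ and earlier work, but Theorem~B is asserted over an arbitrary integral domain containing $i$, where those tools are not available. The paper instead proves the presentation \emph{directly on the Hecke side}: generation in degrees $0,1$ comes from the light leaves cellular basis (\cref{generatorsarewhatweasay}), each relation is verified using the monoidal Soergel relations together with the dilation homomorphisms to reduce to incontractible rectangles, and completeness is an explicit rewriting induction showing that any word in the degree-one generators can be reduced, using only \eqref{rel1}--\eqref{adjacent}, to a combination of light leaves elements $D^\mu_\la D^\la_\nu$ (\cref{presentation}). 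Because the light leaves elements are already known to form a basis, no diamond-lemma confluence analysis and no graded-dimension comparison is needed; your step (c), which you expect to carry most of the work, is exactly the step the paper's choice of normal form renders unnecessary.

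There is also a concrete problem inside your step (b): if you send each arrow to ``the canonical basis element of $e_\mu\mathcal{K}_{m,n}e_\la$'' with no rescaling, the relations \eqref{selfdualrel}, \eqref{noncommuting} and \eqref{adjacent} do \emph{not} hold on the nose in $\mathcal{K}_{m,n}$ -- the surgery computations produce the stated identities only up to sign factors depending on breadths and positions, and these are absorbed precisely by the normalisation $D^\la_\mu\mapsto i^{{\sf sgn}(\la,\mu)}\,\underline{\la}\la\overline{\mu}$ used in Section~9 (this is why $i\in\Bbbk$ is required at all). So even granting an isomorphism $\mathcal{H}_{m,n}\cong\mathcal{K}_{m,n}$, your verification of the relations would fail as written and must be repaired by the $i$-power twist. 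Your step (d), deducing the Ext-quiver statement from the quadratic form of the relations and the linear independence of the degree-one generators, is fine once the presentation is in hand.
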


 In a nutshell,  the power of Theorem B is that it allows us to understand not only the 
{\em graded composition series} of standard and projective modules 
 (the purview of classical Kazhdan--Lusztig combinatorics) but the {\em explicit 
 extensions interrelating  these composition factors} 
 (in terms of      meta-Kazhdan--Lusztig combinatorics).
   In essence, Theorem B  provides complete information about the structure of the anti-spherical Hecke categories 
of $(S_{m+n},S_m \times S_n)$ for $m,n\in\NN$.  
%%Through Theorem A and earlier 
%%work 
%% \cite{MR2600694,MR2781018,MR2955190,MR2881300} 
%% this provides complete structural information 
%% for the many aforementioned  Lie theoretic categories of interest (supergroups, quiver Schur algebras,\dots).   It is worth emphasising  that the presentation of Theorem B holds   in the generality of arbitrary commutative integral domains.  
%%   
%%
%%
%% 
%%
We reap some of the fruits of Theorem B  by providing an incredibly  elementary description of the  full submodule lattices of Verma modules:

   \begin{thmC}
  Let $\Bbbk$ be a field. 
 The submodule lattice of the Verma module $
 \Delta _{m,n}(\la)$ can be calculated  in terms of  the combinatorics of  Dyck tilings;  moreover this   lattice is  independent of the characteristic of $\Bbbk$.  
 
 \end{thmC}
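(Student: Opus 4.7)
The plan is to derive Theorem C as a direct consequence of Theorem B. Since the quadratic presentation of $\mathcal{H}_{m,n}$ has Dyck-combinatorial relations whose structure constants only involve $\pm 1$ and $\pm i$, the module category --- and in particular the submodule lattice of each Verma module --- should inherit this combinatorial rigidity, and the resulting lattice will then be manifestly independent of the characteristic of $\Bbbk$.

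First, I would realise $\Delta_{m,n}(\la)$ concretely within the path-algebra presentation by identifying it with the quotient of $P(\la)$ by the ideal generated by paths that factor through some vertex $\mu > \la$ in the natural (Bruhat-type) order on vertices of the Dyck quiver $\mathscr{D}_{m,n}$. Using Theorem B together with the known character formula (the $p$-Kazhdan--Lusztig multiplicities in this setting are multiplicity-free and enumerated by Dyck tilings), I would exhibit an explicit basis of $\Delta_{m,n}(\la)$ indexed by Dyck tilings $\la \to \mu$ for $\mu \leq \la$, so that each composition factor $L(\mu)$ appears once per Dyck tiling $\la \to \mu$.

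Second, I would describe candidate submodules combinatorially: to each ``Dyck order ideal'' (i.e.\ a subset of the basis above that is closed under the arrows of $\mathscr{D}_{m,n}$ in a prescribed direction), the $\Bbbk$-span is manifestly a submodule, as this is precisely what the quadratic relations of Theorem B enforce. The main obstacle, as I anticipate it, is the converse --- showing that every submodule of $\Delta_{m,n}(\la)$ arises this way. This reduces to proving that the action of the Dyck quiver on the Dyck-tiling basis is ``as rigid as possible'': each arrow either sends a given basis vector to another basis vector (up to a unit scalar) or annihilates it. Granted this rigidity, any submodule is forced to be the span of a Dyck order ideal, and since the whole analysis --- basis, action, relations --- is carried out over $\Z[i] \subseteq \Bbbk$, the resulting submodule lattice depends only on the Dyck combinatorics and not on the characteristic of $\Bbbk$.
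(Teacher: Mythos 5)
Your route is in the same spirit as the paper's (take the light-leaves basis $u_\mu = D^\mu_\lambda$ of $\Delta_{m,n}(\lambda)$, show every submodule is spanned by a subset of basis vectors, and read off the lattice from the degree-one action), but as written it has a gap exactly at the step you defer. First, a direction slip: the basis of $\Delta_{m,n}(\lambda)$ is indexed by $\mu\supseteq\lambda$ with $(\lambda,\mu)$ a Dyck pair, i.e.\ by partitions obtained from $\lambda$ by \emph{adding} Dyck paths, not by $\mu\leq\lambda$. Second, the ``rigidity'' you single out as the main obstacle is essentially free: since the composition multiplicities are $0$ or $1$, the weight space ${\sf 1}_\nu\Delta_{m,n}(\lambda)$ is at most one-dimensional, so $D^\nu_\mu u_\mu$ is automatically a scalar multiple of $u_\nu$, and positivity of the grading forces it to vanish unless $\deg(\lambda,\nu)=\deg(\lambda,\mu)+1$. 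The real content, which your proposal never supplies, is determining \emph{when that scalar is nonzero}. Your description of submodules as spans of subsets ``closed under the arrows of $\mathscr{D}_{m,n}$ in a prescribed direction'' does not capture the answer: the degree-raising edges inside $\Delta_{m,n}(\lambda)$ go in \emph{both} quiver directions. Concretely, $\nu=\mu+P$ gives an edge precisely when $P$ is not adjacent to any tile of the Dyck tiling of $\mu\setminus\lambda$, while $\nu=\mu-P$ gives an edge precisely when $P$ is not a tile but sits non-commutingly inside some tile $Q$ with $P\prec Q$; in both cases the paper proves $D^\nu_\mu u_\mu=u_\nu$ on the nose, using the commuting and non-commuting relations and the light-leaves construction, by induction on the degree. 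Without this computation you have no description of the lattice in terms of Dyck combinatorics.

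Relatedly, your justification of characteristic-independence is not sound as stated: it is false that the quadratic relations only involve units, since the self-dual relation \eqref{selfdualrel} carries a coefficient $2$. So independence of $\operatorname{char}\Bbbk$ cannot be read off from the presentation alone; it follows only once one knows that the structure constants of the degree-one generators acting on the basis $\{u_\mu\}$ are $0$ or $1$, which is exactly the edge computation above (and is what makes the radical, socle and grading filtrations coincide and the lattice purely combinatorial, as in \cref{submodule}).
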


An example is depicted  in \cref{submodules}, below. 

Specialising to the case that $\Bbbk$ is a field and putting Theorems A and B together, we obtain a 
conceptually simpler proof of the results of \cite[Section 2]{BarWang} (which makes use of the Koszulity of these algebras over a field, which is the main result of  \cite{MR2600694}).

%Through   Theorems A  and results in the literature, we deduce 
%  that the Hecke categories  $\mathscr{H}_{m,n}$ control the    rational representations of the supergroup ${\rm GL}_{m| n}(\mathbb C)$ \cite{MR2881300}, 
%categories $\mathscr{O}$  of  perverse sheaves on Grassmannians \cite{MR2781018}, walled Brauer algebras \cite{MR2955190} and
%certain  quiver Schur algebras \cite{MR3356809}.  Thus Theorems B and C makes these objects the most well-understood %infinite
%  highest weight categories  in all of  non-semisimple Lie theory.

%
%
%
%
% Inspired by the explicit structures of Theorems B and C, we make  the  following conjecture for more general Hecke categories:
%  
% 
% \begin{conj*}
%Suppose the $p$-Kazhdan--Lusztig polynomials 
% ${^p}n_{\la,\mu} $ 
%be  independent of $p\geq 0$ for all $\la,\mu \in {^PW}$.  Then the submodule lattices of   Verma modules are independent of the field $\Bbbk$.
% \end{conj*}

\begin{figure}[ht!]
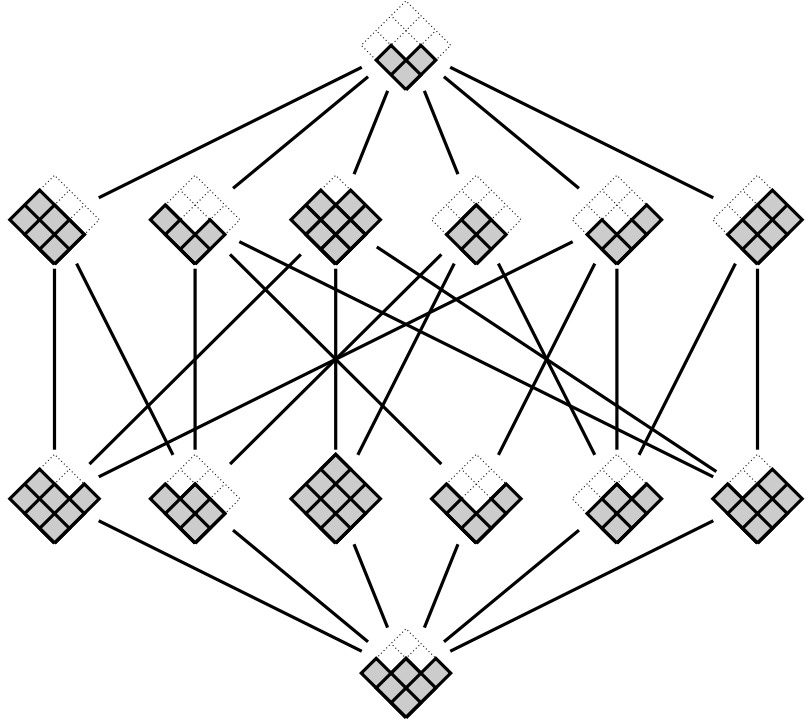


$$
 % [inline block 0: 1 envs, 13957 chars -> data_tex | \begin{tikzpicture}[scale=0.925 ] ...]

 $$

 \caption{The full submodule lattice of the Verma module $\Delta_{m,n}(2,1)$ for  
 $\mathcal{H}_{m,n} $ for $m,n=3$ and $\Bbbk$ any field. 
 We represent each simple module by the corresponding partition (in Russian notation) and highlight the $3\times 3$ rectangle in which the partition exists. 
 This module has simple head $L_{3,3}(2,1)$ and simple socle $L_{3,3}(3,2,1)$. 
  Each edge connects a pair of partitions which differ by adding or removing a single Dyck path. }
 \label{submodules}
 
 \end{figure}

  \subsection{Structure of the paper}
In Section   2 we recall  the necessary combinatorics of oriented Temperley--Lieb diagrams and   $p$-Kazhdan--Lusztig polynomials from \cite{MR2600694,compan2}.  
In Section 3 we recall  the  extended Khovanov arc algebras  and the basic algebras of the Hecke categories which will be of interest in this paper.  
In 
Sections   4  and 5 we  develop the Dyck path combinatorics
 and 
lift this to the level of   generators and bases of 
the basic algebras of the Hecke categories.   
%  (Over a field, we could have deduced this result from 
% the Koszulity of these algebras, proven in earlier work \cite[Corollary 7.10]{compan}, however for the purposes of this paper we wish to work in the context of arbitrary commutative integral domains.)  
 In   Section 6 we take a short detour to discuss  the notion of {\em dilation} 
 for our diagram algebras, which will simplify the main proofs significantly.  
   In Sections 7  we prove Theorem B of this paper, by 
  lifting the Dyck combinatorics 
   to the level of a of $\mathcal{H}_{m,n} $ over an integral domain $\Bbbk$; 
 we then recast  this presentation in terms of the quotient of the path  algebra of the ${\rm Ext}$-quiver in the case that   $\Bbbk$ is a field.
 In Section   8,
 we prove Theorem C.
    Finally, in Section 9 we use Theorem B to prove the isomorphism of Theorem A.

\begin{Acknowledgements*}
The first and third authors were funded by  EPSRC grant 
EP/V00090X/1.  
\end{Acknowledgements*}
   
  \section{The combinatorics of Kazhdan-Lusztig polynomials}

     We begin by reviewing and unifying the combinatorics of Khovanov arc algebras 
 \cite{MR2600694,MR2781018,MR2955190,MR2881300} and the Hecke categories of interest in this paper \cite{compan,compan2}.
     
    \subsection{Cosets, weights and partitions}
 
Let $S_n$ denote the symmetric group of degree $n$. Throughout this paper, we will work with the parabolic Coxeter system $(W,P) = (S_{m+n}, S_m \times S_n)$. We label the simple reflections with the slightly unusual subscripts $s_i, \, -m+1 \leq i \leq n-1$ so that $P = \langle s_i \, | \, i\neq 0\rangle \leq W$. We view $W$ as the group of permutations of the $n+m$ points on a horizontal strip numbered by the half integers $i\pm \tfrac{1}{2}$ where the simple reflection $s_i$ swaps the points $i-\tfrac{1}{2}$ and $i+\tfrac{1}{2}$ and fixes every other point. 
The right cosets of $P$ in $W$ can then be identified by labelled horizontal strips called {\sf weights}, where each point $i\pm \tfrac{1}{2}$ is labelled by either $\up$ or $\down$ in such a way that the total number of $\up$ is equal to $m$ (and so the total number of $\down$ is equal to $n$). Specifically, the trivial coset $P$ is represented by the weight with negative points labelled by  $\up$  and positive points labelled by $\down$. The other cosets are obtained by permuting the labels of the identity weight. An example is given on the left hand side of Figure \ref{typeAtiling-long}.

 \begin{figure}[ht!]
 $$
  % [inline block 1: 2 envs, 21548 chars -> data_tex | \begin{tikzpicture} [scale=0.7] ...]

$$
\caption{
We depict the 
 weight $\varnothing$ along the bottom of the diagram, 
  the weight of  $\la$ along the top of the diagram,    
  and the coset 
  $\color{magenta}s_0
  \color{green!60!black}s_{-1}
    \color{orange}s_{-2}  
      \color{lime!80!black}s_{-3}
          \color{gray}s_{-4}
                    \color{cyan}s_1
          \color{magenta}s_0
  \color{green!60!black}s_{-1}
    \color{orange}s_{-2}
                     \color{violet}s_2
                    \color{cyan}s_1
                    \color{pink}s_3        \color{violet}s_2
$.
  }
\label{typeAtiling-long}
\end{figure}

We denote by ${^PW}$ the set of minimal length right coset representative of $P$ in $W$.  Recall that an element $w\in {^PW}$ precisely when every reduced expression of $w$ starts with $s_0$. This implies that $w$ must be fully commutative, that is no reduced expression for $w$ contains a subword of the form $s_i s_{i\pm 1}s_i$ for some $i$. It follows that the elements of ${^PW}$ can also be represented by partitions that fit into an $m\times n$ rectangle. An example of the correspondence between $w\in {^PW}$, its weight diagram and the associated partition is illustrated in Figure \ref{typeAtiling-long}. 

Formally, a {\sf partition}    $\lambda $ of $\ell$  is defined to be a weakly decreasing  sequence   of non-negative integers $\lambda = (\lambda_1, \lambda_2, \ldots )$ which  sum to $\ell$.  We call $\ell (\lambda) := \ell = \sum_{i}\lambda_i$ the length of the partition $\lambda$.
We define the Young diagram of a partition to be the collection of tiles 
$$[\la]=\{[r,c] \mid 1\leq c \leq \la_r\}$$
depicted in Russian style with rows at $135^\circ$ and columns at $45^\circ$.  We identify a partition with its Young diagram.
We let $\la^t$ denote the transpose partition given by reflection 
of the Russian Young diagram through the vertical axis.  
 Given  $m,n\in \NN$ we let  ${\mathscr P}_{m,n}$ denote the set of all partitions which fit into an $m\times n$ rectangle, that is 
$${\mathscr P}_{m,n}= \{ \la \mid \la_1\leq m, \la_1^t \leq n\}.$$
For $\lambda \in {\mathscr P}_{m,n}$, the $x$-coordinate of a tile $[r,c] \in \lambda$ is   equal to   $r-c \in \{-m+1, -m+2 , \ldots , n-2, n-1\}$ and we define this $x$-coordinate to be  the {\sf content} (or ``colour")   of the tile and we  write ${\sf cont}[r,c]=r-c$.  

For a partition $\lambda$ of $\ell$, we define a standard tableau of shape $\lambda$ to be a bijection $\stt$ from the set of tiles of $\lambda$ to the set $\{1, 2, \ldots , \ell\}$ such that for each $1 \leq k \leq \ell$, the set of tiles $\stt^{-1}(\{1, \ldots , k\})$ is a partition. We can view $\stt$ as a filling of the tiles of $\lambda$ by the number $1$ to $\ell$ such that the numbers increase along rows and columns. We denote by ${\rm Std}(\lambda)$ the set of all standard tableaux of shape $\lambda$. 
There is one particular standard tableau  that we will be using throughout this paper which is defined as follows. We let $\stt_{\lambda}\in {\rm Std}(\lambda)$ denote the tableau in which we fill the tiles of $\lambda$ according to increasing $y$-coordinate and then refine according to increasing $x$-coordinate.  An example is depicted in \cref{super}.
For $\la$ a partition of $\ell$, we define the {\sf content sequence} of  $\sts \in \Std(\la)$   to be the element of $\ZZ^\ell$ given by reading the contents of the boxes in order.

Under the correspondence between ${^PW}$ and ${\mathscr P}_{m,n}$ described above, the content $i$ of each tile of $\lambda \in {\mathscr P}_{m,n}$ corresponds to the subscript of a simple reflection $s_i$. So we will often refer to the simple reflection $\ctau = s_i$ as the content of the tile. Moreover, standard tableaux $\stt\in {\rm Std}(\lambda)$ correspond precisely to reduced expressions $\lambda = s_{i_1}s_{i_2} \ldots s_{i_{\ell}}$ where $\stt^{-1}(j) = [r_j, c_j]$ with ${\sf cont}[r_j, c_j] = i_j$ for each $1\leq j \leq \ell$.
The {\sf Bruhat order} on ${^PW}$ becomes simply the inclusion of the (Young diagrams) of partitions in ${\mathscr P}_{m,n}$.

Given $\lambda \in {\mathscr P}_{m,n}$, we define the set ${\rm Add}(\lambda)$ to be the set of all tiles $[r,c]\notin \lambda$ such that $\lambda \cup [r,c]\in {\mathscr P}_{m,n}$. Similarly, we define the set ${\rm Rem}(\lambda)$ to be the set of all tiles $[r,c]\in \lambda$ such that $\lambda \setminus [r,c] \in {\mathscr P}_{m,n}$. Note that a partition $\lambda$ has at most one addable or removable tile of each content. So for $[r,c]\in {\rm Add}(\lambda)$ (respectively $[r,c]\in {\rm Rem}(\lambda)$) with $\ctau = s_{r-c}$ we write $\lambda + \ctau$ (respectively $\lambda - \ctau$) for $\lambda \cup [r,c]$ (respectively $\lambda \setminus [r,c]$).

 \begin{figure}[ht!]
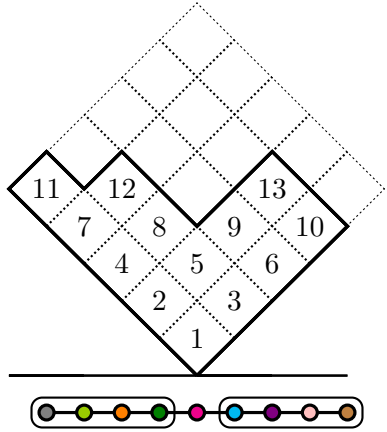

 $$
 % [inline block 2: 1 envs, 3918 chars -> data_tex | \begin{tikzpicture} [scale=0.7] ...]

$$
 \caption{
% The tableaux $\stanT_{\nu\setminus \la}$ of shape $ (3,2,1^2)$ and 
% $(5^2,4,3)\setminus (3,2,1^2)$ and 
%  $(5^3)\setminus (5^2,4,3)$. 
%  The leftmost tableau has content sequence 
%  $
%  \color{magenta}0 \color{black},
%    \color{darkgreen}-1 \color{black},
%    \color{cyan} 1 \color{black},    
%        \color{orange}-2 \color{black},
%       \color{magenta}0 \color{black},
%  \color{violet}2\color{black},
%     \color{pink}3 \color{black}. 
%$
A tableau,  $\stt_{ \la}$, of shape $ (5,4,2^2)$.
%    This tableau has content sequence 
%  $
%  \color{magenta}0 \color{black},
%    \color{darkgreen}-1 \color{black},
%    \color{cyan} 1 \color{black},    
%        \color{orange}-2 \color{black},
%       \color{magenta}0 \color{black},
%  \color{violet}2\color{black},
% . 
%$
  }
 \label{super}
 \end{figure}

We have seen how to pass from a coset to a weight diagram and a partition. We now explain how to go directly from a weight diagram to a partition. 
Read the labels of a weight diagram from left to right. Starting at the left most corner of the $m\times n$ rectangle, take a north-easterly step for each $\down$ and a south-easterly step for each $\up$. We end up at the rightmost corner of the rectangle, having traced out the ``northern perimeter" of the Russian Young diagram.  
In particular, the identity coset corresponds to the weight diagram labelled by $m$ $\up$'s followed by $n$ $\down$'s, tracing the perimeter of the empty partition $\varnothing$. Throughout the paper, we will identify minimal coset representative with their weight diagrams and partitions.

    \subsection{Oriented Temperley--Lieb diagrams and Kazhdan--Lusztig polynomials} 
 
 The following definitions come from \cite{MR2918294}.
    
 \begin{defn} 
\begin{itemize}[leftmargin=*]
 \item To each weight $\lambda$ we associate a {\sf cup diagram} $\underline{\lambda}$ and a {\sf cap diagram} $\overline{\lambda}$. To construct $\underline{\lambda}$,  
repeatedly find a pair of vertices labeled $\down$ $\up$ in order from left to right that are neighbours in the sense that there are only vertices already joined by cups in between. Join these new vertices together with a cup. Then repeat the process until there are no more such $\down$ $\up$ pairs. Finally draw rays down at all the remaining $\up$ and $\down$ vertices. The cap diagram $\overline{\lambda}$ is obtained by flipping $\underline{\lambda}$ horizontally. We stress that the vertices of the cup and cap diagrams  are not labeled. 
\item Let $\lambda$ and $\mu$ be weights. We can glue $\underline{\mu}$ under $\lambda$ to obtain a new diagram $\underline{\mu}\lambda$. We say that $\underline{\mu}\lambda$ is  {\sf oriented} if (i) the vertices at the ends of each cup in $\underline{\mu}$ are labelled by exactly one $\down$  and one $\up$ in the weight $\lambda$ and (ii) it is impossible to find two rays in $\underline{\mu}$ whose top vertices are labeled $\down$ $\up$  in that order from left to right in the weight $\lambda$. 
Similarly, we obtain a new diagram $\lambda \overline{\mu}$ by gluing $\overline{\mu}$ on top of $\lambda$. We say that $\lambda \overline{\mu}$ is oriented if $\underline{\mu} \lambda$ is oriented. 
\item Let $\lambda$, $\mu$ be weights such that $\underline{\mu}\lambda$ is oriented. We set the {\sf degree} of the diagram $\underline{\mu}\lambda$ (respectively $\lambda \overline{\mu}$) to the the number of clockwise oriented cups (respectively caps) in the diagram. 
\item Let $\la, \mu ,\nu$ be weights such that $\underline{\mu}\la$ and $\la \overline{\nu}$ are oriented. Then  we form a new diagram $\underline{\mu}\la\overline{\nu}$ by gluing $\underline{\mu}$ under and $\overline{\nu}$ on top of $\la$. We set ${\rm deg}(\underline{\mu}\la \overline{\nu}) = {\rm deg}(\underline{\mu}\la)+{\rm deg}(\la \overline{\nu})$.
\end{itemize}
\end{defn}

An example is provided in Figure \ref{figure4}.

For the purposes of this paper, for $p\geq 0$, we can define the $p$-Kazhdan--Lusztig polynomials of type $(W,P) = (S_{n+m},S_m \times S_n)$ as follows.  
For $\la,  \mu \in \mptn$ we set
$$
{^p}n_{\la,\mu}= 
\begin{cases}
q^{\deg(\underline{\mu} \la)}		&\text{if $ \underline{\mu} \la $ is oriented}\\
0						&\text{otherwise.}
\end{cases}
$$
We refer to \cite[Theorem 7.3]{compan2} and \cite[Theorem A]{compan} for a justification of this definition and to \cite{MR2918294} for the origins of this combinatorics.

  \begin{figure}[ht!]
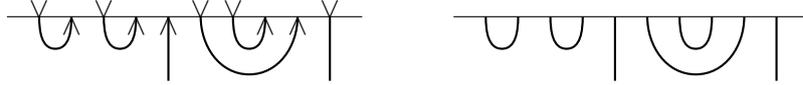

  $$   % [inline block 3: 2 envs, 3205 chars -> data_tex | \begin{tikzpicture} [scale=0.85] 		...]
$$
\caption{The construction of the cup diagram $\underline{\la}$ for $\la=(5,4,2^2)$. See also \cref{XXX}.}
\label{figure4}
	\end{figure}
  
 It is clear that for a fixed $\mu\in \mptn$, the diagram $\underline{\mu}\lambda$ is oriented if and only if the weight $\lambda$ is obtained from the weight $\mu$ by swapping the labels on some of the pairs of vertices connected by a cup in $\underline{\mu}$. Moreover, in this case the degree of $\underline{\mu}\la$ is precisely the number of such swapped pairs. 
See \cref{cref-it2} for an example of a cup diagram of degree 8.

\begin{figure}[ht!]
$$
     \begin{tikzpicture}  [yscale=0.6 ,xscale=0.6 ]

 \draw[lime!80!black,  thick]  (6,0) to [out=-90,in=180] (6.5,-0.6) to [out=0,in=-90] (7,0)  ; 
 \draw[   thick]  (5,0) to [out=-90,in=180] (6.5,-0.9) to [out=0,in=-90] (8,0)  ; 
 
\draw[    orange, thick]  (4,0) to [out=-90,in=180] (6.5,-1.2) to [out=0,in=-90] (9,0)  ;

\draw[ , thick]  (3,0) to [out=-90,in=180] (9.5,-1.3-0.2-0.2-0.2) to [out=0,in=-90] (16,0)  ; 

\draw[violet,     thick]  (2,0) to [out=-90,in=180] (9.5,-1.5-0.3-0.2-0.2) to [out=0,in=-90] (17,0)  ; 

\draw[ magenta, thick]  (1,0) to [out=-90,in=180] (9.5,-1.7-0.2-0.2-0.2-0.2) to [out=0,in=-90] (18,0)  ;

 \draw[    thick]  (012,0) to [out=-90,in=180] (12.5,-0.6) to [out=0,in=-90] (13,0)  ;

 \draw[   cyan, thick]  (011,0) to [out=-90,in=180] (12.5,-0.9) to [out=0,in=-90] (14,0)  ;

 \draw[  gray,  thick]  (010,0) to [out=-90,in=180] (12.5,-1.2) to [out=0,in=-90] (15,0)  ;

 \draw[  pink,  thick]  (20,0) to [out=-90,in=180] (20.5,-0.6) to [out=0,in=-90] (21,0)  ; 

 \draw[ brown,  thick]  (19,0) to [out=-90,in=180] (20.5,-0.9) to [out=0,in=-90] (22,0)  ;

 \draw(-0.5,0)--++(0:23);

\foreach \i in {0,3,5,7,9,12,14,15,17,18,21,22}
 {
  \draw (\i,0.11) node {$\scriptstyle\down$};}

\foreach \i in {1,2,4,6,8,10,11,13,16,19,20}
{  \draw(\i,-0.11) node {$\scriptstyle\up$};} 

\draw[thick](0,0)--++(-90:2);
\draw[densely dotted,thick](0,-2)--++(-90:0.3);

\end{tikzpicture}  
$$
\caption{The diagram $\underline{\mu}\la$ for 
 $\la=(11,9,8,7,6,4,3^2,2^2)$ and 
  $ 
\mu=  (11^7,8^3,2^2)
  $.
% corresponding to the Dyck tableaux pictured in \cref{cref-it}. 
 We have coloured the positive degree  arcs, which should be compared (colour-wise!) with the  Dyck paths in   \cref{cref-it}. }
\label{cref-it2}
\end{figure}
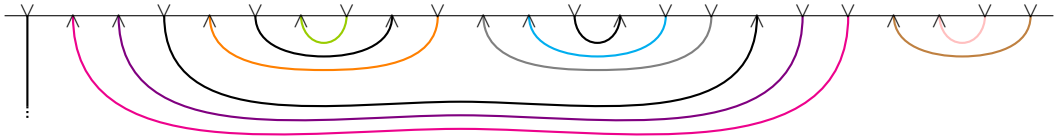

There is an alternative construction of the cup diagram $\underline{\mu}$ as the top half of a Temperley--Lieb diagram $e_\mu$. 
An $(m+n)$- {\sf Temperley--Lieb  diagram }is a rectangular
	frame with, in our case, $m+n$ vertices along   the top and $m+n$  along the bottom 
	which are paired-off by   non-crossing strands.  
	We refer to a strand connecting a top and bottom vertex as a {\sf propagating strand}.  We refer to any strand connecting two top vertices as a  {\sf cup} and any strand  connecting two bottom vertices as a {\sf cap}.   
For $\mu\in \mptn$, the Temperley--Lieb diagram $e_\mu$ is obtained by starting from the partition $\mu$ and taking the product of the \lq Temperley--Lieb generator' in each of its tiles. The cup diagram $\underline{\mu}$ is then simply the top half of the Temperley--Lieb diagram $e_\mu$.	This is illustrated in \cref{XXX}.  For more details, see \cite{compan2}.

\begin{figure}[ht!]
$$
 % [inline block 4: 2 envs, 14269 chars -> data_tex | \begin{tikzpicture} [scale=0.7] ...]

$$
\caption{The element $e_\la$ for $\la=(5,4,2^2)$.}
\label{XXX}
\end{figure}

Now, let $d$ be any $(m+n)$-Temperley--Lieb diagram, and $\la$, $\mu$ be weights, then we can form a new diagram $\lambda d \mu$ by gluing the weight $\lambda$ under $d$ and $\mu$ on top of $d$. We say that $\lambda d \mu$ is an {\sf oriented Temperley--Lieb diagram} if for each  propagating strand in $d$
 its two  vertices
 are either both  $\down$ symbols or both $\up$ symbols and for each cup or cap in $d$ 
 its two  vertices
 consist of precisely one  $\down$ symbol and one   $\up$ symbol.

 It is easy see that for $\lambda, \mu\in \mptn$ we have that $\underline{\mu}\la$ is oriented if and only if $\varnothing e_\mu \lambda$ is an oriented Temperley--Lieb diagram. Throughout the paper, we will always view the oriented Temperley--Lieb diagrams $\varnothing e_\mu \la$ on the Young diagram of the partition $\mu$ as illustrated in Figure \ref{typeAtiling-long2}.

It was shown in \cite{compan2} that we can define an graded algebra structure on the space spanned by all oriented Temperley--Lieb diagrams. A crucial ingredient in the construction of the light leaves basis for the Hecke category (see Definition 5.1 below) comes from writing each oriented Temperley--Lieb diagram $\varnothing e_\mu \la$ as a product of generators for this algebra. We will not need the explicit presentation for this graded algebra here and will instead view this product of generators as an \lq oriented tableau' $\stt_\mu^\la$. This oriented tableau $\stt_\mu^\la$ is obtained by assigning to each tile of  the partition $\mu$ not only a number between $1$ and $\ell(\mu)$ defined by the tableau $\stt_\mu$ but also one of four possible orientations determined by the weight $\lambda$.  

\begin{defn} Let $\la, \mu \in \mptn$ such that  $\underline{\mu}\la$ is oriented. Draw the Temperley--Lieb diagram $e_\mu$ on the tiles of $\mu$ as in Figure \ref{XXX}. Gluing $\varnothing$ and $\la$ on the bottom and top of the diagram respectively defines one of four possible orientations on each tile of $\mu$. 
We define the orientation label  of a tile 
 as follows:
  $$ 
 % [inline block 5: 6 envs, 31854 chars -> data_tex | \begin{tikzpicture}[xscale=-0.9,yscale=0.9] ...]

$$
\caption{
We depict the 
 weight $\varnothing$ along the bottom of the diagram, 
  the weight of  $\la$ along the top of the diagram,    
  and the coset 
  $\color{magenta}s_0
  \color{green!60!black}s_{-1}
    \color{orange}s_{-2}  
      \color{lime!80!black}s_{-3}
          \color{gray}s_{-4}
                    \color{cyan}s_1
          \color{magenta}s_0
  \color{green!60!black}s_{-1}
    \color{orange}s_{-2}
                     \color{violet}s_2
                    \color{cyan}s_1
                    \color{pink}s_3        \color{violet}s_2
$.
   The first diagram is   ${  e} _{ {(5,4,2^2)}   }$.  The latter diagram (of degree 2) is obtained by reorienting the  arc connecting the first to the fourth northern vertex 
   and the  arc connecting the sixth and ninth northern vertices.  
  }
\label{typeAtiling-long2}
\end{figure}

\section{The diagrammatic algebras}
 
 We now recall the construction of the two protagonists of this paper: the Hecke categories associated to $(S_{m+n}, S_m\times S_n)$ and the (extended) Khovanov arc algebras.

\subsection{The Hecke categories }
\label{coulda}

 We denote by $S = \{s_i \, : \, -m+1 \leq i \leq n-1\}$ the set of simple reflections. To simplify notations, for $\csigma = s_i, \ctau=s_j \in S$ we write $|\csigma - \ctau | :=i-j$. So we have $|\csigma - \ctau|>1$ precisely when $\csigma \ctau = \ctau \csigma$ and $|\csigma - \ctau| = 1$ precisely when  $\csigma \ctau \csigma= \ctau \csigma \ctau$.

\medskip

 We define the {\sf  Soergel generators} to be the framed graphs 
$$
{\sf 1}_{\emptyset } =
\begin{minipage}{1.5cm} % [inline block 6: 6 envs, 2793 chars -> data_tex | \begin{tikzpicture}[scale=1] \draw[densely dotted,rounded corners](-0.5cm,-0.6cm)  rectangle (0.5cm,0.6cm);...]

\end{minipage}.$$ 
We define the northern/southern reading word of a Soergel generator (or its dual) to be word in the alphabet $S$ obtained by reading the colours of the northern/southern edge of the frame respectively.  
Given   two (dual) Soergel generators $D$ and $D'$ we define $D\otimes D'$ to be the diagram obtained by horizontal concatenation (and we extend this linearly).  The northern/southern colour sequence of $D\otimes D'$ is  the concatenation of those of $D$ and $D'$ ordered from left to right.    
Given any two (dual) Soergel generators, we define their product $D\circ D'$ (or simply $DD'$) to be the vertical concatenation of $D$ on top of $D'$ if the southern reading word of $D$ is equal to 
 the northern reading word of $D'$ and  zero otherwise.   
We define a Soergel graph to be any diagram obtained by vertical and horizontal concatenation of the generators and their duals.   

 Given $\csigma$, we define  the corresponding ``barbell", ``dork" and ``gap'' diagrams to be the elements  
 $${\sf bar}(\csigma)=  
  {\sf spot}_ \csigma^\emptyset
   {\sf spot}^ \csigma_\emptyset
   \qquad
   {\sf dork}^{\csigma\csigma}_{\csigma\csigma}= 
      {\sf fork}^{\csigma\csigma}_{ \csigma}
            {\sf fork}_{\csigma\csigma}^{ \csigma} \qquad {\sf gap}(\csigma) = {\sf spot}^\csigma_\emptyset {\sf spot}^\emptyset_\csigma.$$
%   We define an {\sf expression} $\w \in \ZZ^\ell$ to be any sequence of $\ell$ integers for $\ell \geq 0$.  
% Given $\x $ and $\y$ two   expressions, we write 
%${\sf braid}_{\x}^{\y }$ for the minimal colour-preserving bijection between these two expressions 
% if such a diagram exists, and leave ${\sf braid}_{\x}^{\y }$ undefined otherwise.  
  Given $\sts ,\stt\in\Std(\la)$ for $\la \in \mptn$, we write 
${\sf braid}^{\sts}_{\stt }$ for the  permutation mapping one tableau to the other, with strands coloured according to the contents.   For example 
$${\sf braid}^{\sts}_{\stt }=\begin{minipage}{3cm}\begin{tikzpicture}[scale=1.5]
\draw[densely dotted, rounded corners] (-0.25,0) rectangle (1.75,1);
\draw[magenta,line width=0.08cm](0,0)--++(90:1);
\draw[cyan,line width=0.08cm](1,0) to   (1.5,1);

\draw[darkgreen,line width=0.08cm](0.5,0)--(0.5,1);
%\draw[orange,line width=0.08cm](1.5-0.5,0)--++(90:1);
\draw[orange,line width=0.08cm](1.5,0)-- (1,1);
%\draw[magenta,line width=0.08cm](2.5-0.5,0)--++(90:1);
%\draw[darkgreen,line width=0.08cm](3-0.5,0)--++(90:1);
\end{tikzpicture}\end{minipage} 
\qquad
{\sf braid}^{\stt}_{\stu }=\begin{minipage}{3cm}\begin{tikzpicture}[scale=1.5]
\draw[densely dotted, rounded corners] (-0.25,0) rectangle (1.75,1);
\draw[magenta,line width=0.08cm](0,0)--++(90:1);
\draw[cyan,line width=0.08cm](0.5,0) to  (1,1);

\draw[darkgreen,line width=0.08cm](1,0)--(0.5,1);
%\draw[orange,line width=0.08cm](1.5-0.5,0)--++(90:1);
\draw[orange,line width=0.08cm](1.5,0)-- (1.5,1);
%\draw[magenta,line width=0.08cm](2.5-0.5,0)--++(90:1);
%\draw[darkgreen,line width=0.08cm](3-0.5,0)--++(90:1);
\end{tikzpicture}\end{minipage} 
\qquad 
{\sf braid}^{\sts}_{\stu}=\begin{minipage}{3cm}\begin{tikzpicture}[scale=1.5]
\draw[densely dotted, rounded corners] (-0.25,0) rectangle (1.75,1);
\draw[magenta,line width=0.08cm](0,0)--++(90:1);
\draw[cyan,line width=0.08cm](0.5,0) to [out=90,in=-90] (1.5,1);

\draw[darkgreen,line width=0.08cm](1,0)--(0.5,1);
%\draw[orange,line width=0.08cm](1.5-0.5,0)--++(90:1);
\draw[orange,line width=0.08cm](1.5,0)-- (1,1);
%\draw[magenta,line width=0.08cm](2.5-0.5,0)--++(90:1);
%\draw[darkgreen,line width=0.08cm](3-0.5,0)--++(90:1);
\end{tikzpicture}\end{minipage} 
$$
%for $\x=(\color{magenta}0\color{black},\color{darkgreen}1\color{black},\color{orange}2\color{black},\color{violet}-1\color{black})$
% and 
% $\y=(\color{magenta}0\color{black},\color{violet}-1\color{black},\color{darkgreen}1\color{black},\color{orange}2\color{black} )$.
for the three  $\sts,\stt,\stu \in \Std(3,1)$.
We sometimes drop the explicit tableaux and simply record the underlying content sequence.  

%\begin{defn}
%For each $\la \in \mptn$ and $m,n \in \NN$ we fix a $\stt_\la \in \Std(\la)$.  
%We say that a Soergel graph is {\sf $(m,n)$-basic} if 
% it has no barbells and 
%its northern and southern 
%colour sequences are given by   $\stt_\la,\stt_\mu$ respectively, for some $\la,\mu \in \mptn$.
%\end{defn}

  \begin{rmk} 
The cyclotomic quotients of (anti-spherical) Hecke categories are small categories  
with finite-dimensional morphism spaces given by the light leaves basis of \cite{MR3555156,antiLW}.     
Working with such a category is equivalent to working with a locally unital algebra, as defined in \cite[Section 2.2]{BSBS}, see  \cite[Remark 2.3]{BSBS}.  
   Throughout this paper we will work in the latter setting.  
   The reader who prefers to think of categories can equivalently phrase everything in this paper  in terms of categories and representations of categories.

   \end{rmk}

We are  ready to define the first diagrammatic algebra of interest in this paper.  The following simplification of the presentation of the Hecke category is made using \cite[Theorem 2.1]{compan}

 \begin{defn}\label{easydoesit}   \renewcommand{\vvv}{{\underline{w} }} 
\renewcommand{\w}{{\underline{x}}}
\renewcommand{\x}{{\underline{y}}}
\renewcommand{\y}{{\underline{z}}}
\newcommand{\zz}{{\underline{w}}}
 
 We define  $\mathscr{H}_{m,n}   $ to be the locally-unital  associative $\Bbbk$-algebra spanned by all  Soergel-graphs  with multiplication given by $\circ$-concatenation modulo the following local relations and their vertical and horizontal flips.  Firstly,  for any   $ \csigma, \ctau \in  S$  we have the idempotent relations 
\begin{align*}
{\sf 1}_{\csigma} {\sf 1}_{\ctau}& =\delta_{\csigma,\ctau}{\sf 1}_{\csigma} & {\sf 1}_{\emptyset} {\sf 1}_{\csigma} & =0 & {\sf 1}_{\emptyset}^2& ={\sf 1}_{\emptyset}\\
{\sf 1}_{\emptyset} {\sf spot}_{\csigma}^\emptyset {\sf 1}_{\csigma}& ={\sf spot}_{\csigma}^{\emptyset} & {\sf 1}_{\csigma} {\sf fork}_{\csigma\csigma}^{\csigma} {\sf 1}_{\csigma\csigma}& ={\sf fork}_{\csigma\csigma}^{\csigma} &
{\sf 1}_{\ctau\csigma}  {\sf  braid}_{\csigma\ctau}^{\ctau\csigma}
 {\sf 1}_{\csigma\ctau}  & ={\sf  braid}_{\csigma\ctau}^{\ctau\csigma} 
\end{align*}
where the final relation holds for all ordered pairs  $( \csigma,\ctau)\in S^2$  with  $|\csigma- \ctau|>1$.  
For each  $\csigma \in  S $  we have   fork-spot contraction, the double-fork, and circle-annihilation  
 relations:  
\begin{align*} 
({\sf spot}_\csigma^\emptyset \otimes {\sf 1}_\csigma){\sf fork}^{\csigma\csigma}_{\csigma}
=
{\sf 1}_{\csigma}
 \qquad 
  ({\sf 1}_\csigma\otimes {\sf fork}_{\csigma\csigma}^{ \csigma} )
({\sf fork}^{\csigma\csigma}_{\csigma}\otimes {\sf 1}_{\csigma})
=
{\sf fork}^{\csigma\csigma}_{\csigma}
{\sf fork}^{\csigma}_{ \csigma\csigma}
%\end{align*}
%\begin{align*}
\qquad  {\sf fork}_{\csigma\csigma}^{\csigma}
 {\sf fork}^{\csigma\csigma}_{\csigma}=0 %\qquad \quad {\sf bar}(\csigma)\otimes {\sf 1}_{\csigma}
%+
% {\sf 1}_{\csigma} \otimes {\sf bar}(\csigma)
% =
%2  {\sf gap} (\csigma)
\end{align*}
   For   $(\csigma, \ctau ,\crho)\in S^3$ 
with   $|\csigma- \crho|,
|   \crho- \ctau| , |\csigma- \ctau|>1$,  we have the commutation relations 
 \begin{align*} 
%  \label{braidrelation1}
 {\sf spot}^{ \csigma }_{\emptyset} \otimes {\sf 1}_{\crho} = 
{\sf braid}^{\csigma\crho}_{\crho\csigma}
( {\sf 1}_{\crho} \otimes {\sf spot}^{ \csigma }_{\emptyset} )
 \qquad
(  {\sf fork}^{ \csigma  \csigma}_{ \csigma} \otimes {\sf 1}_{\crho}  ) 
{\sf braid}^{\csigma\crho}_{\crho\csigma}
=
{\sf braid}^{\csigma\csigma\crho}_{\crho\csigma\csigma}
 (     {\sf 1}_{\crho} \otimes {\sf fork}^{ \csigma  \csigma}_{ \csigma} ) 
   \end{align*}
 \begin{align*} 
 {\sf braid}^{\csigma\ctau\crho}_{ \csigma\crho\ctau }
 {\sf braid}^ { \csigma\crho\ctau } _{  \crho\csigma\ctau } 
 {\sf braid}^ {  \crho\csigma\ctau }  _{  \crho \ctau\csigma }  
=
 {\sf braid}^{\csigma\ctau\crho}_{\ctau \csigma\crho  }
 {\sf braid}^ {\ctau \csigma\crho  } _{  \ctau  \crho  \csigma} 
 {\sf braid}^ {   \ctau  \crho  \csigma }  _{  \crho \ctau\csigma }  
     \end{align*}
    For $\csigma,\ctau \in S$ with $|\csigma- \ctau|=1$ we have the one and two colour Demazure relations:
\begin{align*}
     {\sf bar}(\csigma)\otimes {\sf 1}_\csigma
     +
 {\sf 1}_\csigma\otimes           {\sf bar}(\csigma) 
& =2 {\sf gap}(\csigma)
% \qquad
% \quad
\\      {\sf bar}(\ctau)\otimes {\sf 1}_\csigma
     -
 {\sf 1}_\csigma\otimes           {\sf bar}(\ctau) 
& =		 {\sf 1}_\csigma\otimes           {\sf bar}(\csigma) 	- {\sf gap}(\csigma)
  \end{align*}
     and the null braid relation
%  For $\mu$ any partition tiling and $\csigma\in {\rm Add}(\mu)$, we have the monochrome Tetris relation
%$$
% {\sf 1}_{\stt_{\mu\csigma}}\otimes {\sf 1}_\csigma = 
% {\sf 1}_{\stt_{\mu}} \otimes ({\sf spot}_\csigma ^\emptyset \otimes {\sf fork}_\csigma^{\csigma\csigma}
% +{\sf spot}^\csigma _\emptyset \otimes {\sf fork}^\csigma_{\csigma\csigma})
% +\!\!
% \sum_{[x,y] \in {\rm Hk_{{\color{magenta}[r,c]}}} ( \mu)} 
% {\sf gap}(\stt_\mu- [x,y])  \otimes {\sf dork}^{\csigma\csigma}_{\csigma\csigma}.
%$$
%For any  $\mu \csigma\ctau\in \mptn $,  
%%\sout{such that 
%% $ \csigma\not\in \Add(\mu\csigma\ctau) $}, 
% we have the null-braid relation
\begin{align*} 
 {\sf 1}_{\csigma\ctau\csigma} + 
 ({\sf 1}_\csigma \otimes {\sf spot}^\ctau _\emptyset \otimes  {\sf 1}_\csigma )
 {\sf dork}^{\csigma\csigma}_{\csigma\csigma}
 ( {\sf 1}_\csigma\otimes 
  {\sf spot}_\ctau ^\emptyset \otimes  {\sf 1}_\csigma )
 =0
\end{align*}
% For   $\mu \in \mptn $  and 
% $\ctau\in \Add(\mu)$, we have the bi-chrome Tetris relation 
% $${\sf 1}_{ \stt_{\mu}}  \otimes {\sf bar}(\ctau  )  
% = 
%-\!\!
%\sum_{[r,c] \in {\rm Hook_\ctau} (\mu)} 
% {\sf gap}(\stt_\mu- [r,c]). $$  
   Further,  we require the interchange law and the monoidal unit relation 
$$
% \big(({\sf D  }_1\circ	{\sf 1}_{ {\w}}   )\otimes  ({\sf D}_2 \circ {\sf 1}_{ {\x}}) \big)
%\big(({\sf 1}_{\w} \circ {\sf D}_3)  \otimes ({\sf 1}_\x \circ{\sf D }_4)\big)
%=
% ({\sf D}_1 \circ {\sf 1}_{{\w}} \circ {\sf D_3}) \otimes ({\sf D}_2 \circ {\sf 1}_{{\x}} \circ {\sf D}_4)
 \big( {\sf D  }_1 \otimes   {\sf D}_2   \big)\circ  
\big(  {\sf D}_3  \otimes {\sf D }_4 \big)
=
 ({\sf D}_1 \circ   {\sf D_3}) \otimes ({\sf D}_2 \circ  {\sf D}_4)
%$$
%and the monoidal unit relation
%$$
\qquad {\sf 1}_{\emptyset} \otimes {\sf D}_1={\sf D}_1={\sf D}_1 \otimes {\sf 1}_{\emptyset}
$$
for all diagrams ${\sf D}_1,{\sf D}_2,{\sf D}_3,{\sf D}_4$.  
 Finally, we require the   non-local cyclotomic   relations  
 \begin{align*} 
{\sf 1}_\csigma \otimes D=0
  \qquad 
  {\sf bar}(\ctau) \otimes D=0
\end{align*}
 for all ${\color{magenta}s_0}\neq \csigma \in S $, $\ctau \in S$  and $D$ any diagram.   

\smallskip

   We also define the idempotent truncation  $${\sf 1}_{m,n}= \sum_{\la\in\mptn } {\sf 1}_{\stt_\la}\qquad 
    \mathcal{H}_{m,n}= {\sf 1}_{m,n} \mathscr{H}_{m,n} {\sf 1}_{m,n}$$

 \end{defn}

  \begin{rmk} In \cite[Theorem 4.21]{compan} the algebra $   \mathcal{H}_{m,n}$ is shown to be the basic algebra of the anti-spherical  Hecke category 
 for $W= {S}_{m+n}$ the finite symmetric group and $P
 = {S}_{m}\times S_n
 \leq W$ a maximal parabolic subgroup.  
 \end{rmk}

   \begin{rmk} 
The algebras $\mathscr{H}_{m,n}$ and $\mathcal{H}_{m,n}$ can be equipped with 
 a $\mathbb Z$-grading which preserves the duality~$\ast$.  The degrees  of  the generators under this grading are defined  as follows:
 $$
 {\sf deg}({\sf 1}_\emptyset)=0
 \quad
  {\sf deg}({\sf 1}_\al)=0
  \quad
  {\sf deg} ({\sf spot}^\emptyset_\al)=1
    \quad
  {\sf deg} ({\sf fork}^\al_{\al\al})=-1
    \quad
  {\sf deg} ({\sf braid}^{\al\bet}_{\bet\al} )=0
 $$
for $\al,\bet \in S $   such that $|\al-\bet|>1$.
 \end{rmk}

\subsection{Khovanov  arc algebras}\label{Khovanov  arc algebras}

 We now recall the definition of  the extended Khovanov arc algebras studied in  \cite{MR2600694,MR2781018,MR2955190,MR2881300}. 
 We define $\mathcal{K} _{m,n }$ to be the algebra spanned by diagrams 
$$
\{
\underline{\la}
\mu \overline{\nu}
\mid \la,\mu,\nu \in \mptn \text{ such that }
\mu\overline{\nu},  \underline{\la}
\mu \text{  are oriented}\}
$$
 with the multiplication defined as follows.
% First set 
% $$(\underline{\la}
%\mu \overline{\nu})(\underline{\la'}
%\mu' \overline{\nu'}) = 0 \quad \mbox{unless $\nu = \la'$}.$$ 
%To compute $(\underline{\la}
%\mu \overline{\nu})(\underline{\nu}
%\mu' \overline{\nu'})$ place $(\underline{\la}
%\mu \overline{\nu})$ under $(\underline{\nu}
%\mu' \overline{\nu'})$ then follow the \lq surgery' procedure.
First set 
 $$(\underline{\la}
\mu \overline{\nu})(\underline{\alpha}
\beta \overline{\gamma}) = 0 \quad \mbox{unless $\nu = \alpha$}.$$ 
To compute $(\underline{\la}
\mu \overline{\nu})(\underline{\nu}
\beta \overline{\gamma})$ place $(\underline{\la}
\mu \overline{\nu})$ under $(\underline{\nu}
\beta \overline{\gamma})$ then follow the \lq surgery' procedure.
This surgery combines two circles into one or splits one circle into two  using the following rules for re-orientation (where we use the notation
$1=\text{anti-clockwise circle}$, $x=\text{clockwise circle}$, $y=\text{oriented strand}$).  We have the splitting rules 
$$1 \mapsto 1 \otimes x + x \otimes 1,
\quad
 x \mapsto x \otimes x,
 \quad
 y \mapsto x \otimes y. 
 $$ 
 and the merging rules 
\begin{align*}
1 \otimes 1 \mapsto 1, 
\quad
1 \otimes x \mapsto x,
\quad
 x \otimes 1 \mapsto x,
\quad
x \otimes x \mapsto 0,
\quad
1 \otimes y  \mapsto y,\quad
x \otimes y \mapsto 0,
\end{align*}
\begin{align*}
y \otimes y  \mapsto
\left\{
% [inline block 7: 10 envs, 11002 chars -> data_tex | \begin{array}{ll} y\otimes y&\text{if both strands are propagating,  ...]
 \end{minipage}  $$ 
 where we highlight with arrows the pair of arcs on which we are about to perform surgery. 
This is similar to \cref{brace-surgery}. 
 \end{eg}

     \section{ Dyck combinatorics}
\label{dyckgens}
We have defined the $p$-Kazhdan--Lusztig polynomials via
 counting of certain 
oriented Temperley--Lieb diagrams.  
 For the purposes of this paper, we require richer combinatorial objects which {\em refine} the Temperley--Lieb construction: these are provided by tilings by Dyck paths.  
  
 Let us start with a simple example to see how these Dyck paths come from the  oriented Temperley--Lieb diagrams. Consider the partitions $\mu = (5^3,4, 1)$ and $\la = (4^2,3,1^2)$. The oriented Temperley--Lieb diagram $\varnothing e_\mu \la$ is illustrated in Figure \ref{example-2}. We see that $\la$ is obtained from $\mu$ by swapping the labels of the vertices of one cup in $e_\mu$. The tiles of $\mu$ which intersect this cup form a Dyck path (see definition below) highlighted in pink. Moreover the partition $\la$ is obtained from the partition $\mu$ by removing the equivalent Dyck path shaded in grey (or, equivalently, by removing the pink Dyck path, and letting the tiles fall under gravity).
More generally, if $\la, \mu\in \mptn$ with $\underline{\mu}\la$ oriented of degree $k$, then we will see that the partition $\la$ is obtained from the partition $\mu$ by removing $k$ Dyck paths.  

In this section, we develop the combinatorics of Dyck paths needed to give a quadratic presentation for the Hecke category.

\begin{figure}[ht!]
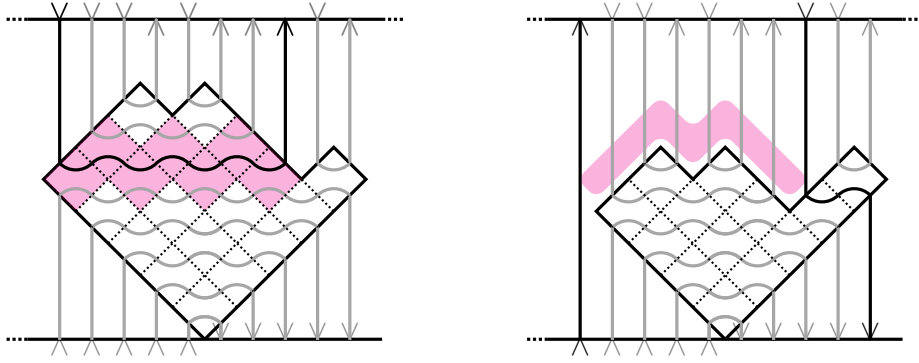

$$
% [inline block 8: 2 envs, 28131 chars -> data_tex | \begin{tikzpicture}[scale=0.6] ...]
 
   $$
   \vspace{-1cm}
   
   \caption{On the left we picture the cup diagram for $(5^3,4,1)$ and we highlight the arc, $p$, and the corresponding 
      path $P_{sf}$. On the right we have the
   partition/cup diagram obtained by removing $P$.  %Notice that $p$ travels through the fork-spot tiles highlighted in the middle diagram in \cref{example-1}.
   }
   \label{example-2}
  \end{figure}

    \subsection{Dyck paths}
 
 We define a path on the $m\times n$ tiled rectangle to    
be a finite non-empty set $P$
 of  tiles  that are ordered $[r_1, c_1], \ldots , [r_s, c_s]$ for some $s\geq 1$ such that 
 for each $1\leq i\leq s-1$ we have $[r_{i+1}, c_{i+1}] = [r_i+1, c_i]$ or $[r_i, c_i -1]$. Note that the set $\underline{\sf cont}(P)$ of contents of the tiles in a path $P$ form an interval of integers.
 We say that $P$ is a {\sf Dyck path} if
 $$\min \{ r_i+c_i-1 \, : \, 1\leq i\leq s\} = r_1+c_1-1 = r_s+c_s-1,$$
that is the minimal height of the path is achieved at the start and end of the path.
 We will write ${\sf first}(P) = {\sf cont}([r_1, c_1])$ and ${\sf last}(P) = {\sf cont}([r_s, c_s])$. 
 
 Throughout the paper, we will identify all Dyck paths $P$ having the same content interval ${\sf cont}(P)$. There are a few of places where we will need to fix a particular representative for a Dyck path $P$ and in that case we will use subscripts, such as $P_b$ or $P_{sf}$.

     Given $P$ a Dyck path, we set $|P|$ to be the number of tiles in $P$. We also define the {\sf breadth} of $P$, denoted by $b(P)$, to be  
 $$b(P)=
 \tfrac{1}{2}(|P|+1).$$   This measures the horizontal distance covered by the path.

\begin{defn} Let $P$ and $Q$ be Dyck paths.
\begin{itemize}[leftmargin=*]
\item We say that $P$ and $Q$ are {\sf adjacent} if and only if the multiset given by the disjoint union ${\sf cont}(P) \sqcup {\sf cont}(Q)$ is an interval.
\item We say that $P$ and $Q$ are {\sf distant} if and only if $$\min \{  |{\sf cont}[r,c] - {\sf cont}[x,y]| \, : \, [r,c]\in P, [x,y]\in Q\} \geq 2.$$
\item We say that $P$ {\sf covers} $Q$ and write $Q\prec P$ if and only if $${\sf first}(Q) >{\sf first}(P) \,\,  \mbox{and} \,\,  {\sf last}(Q) < {\sf last} (P).$$
\end{itemize}
 \end{defn}
 
 Examples of such Dyck paths $P$ and $Q$ are given in Figure \ref{adjdistcover}.

\begin{figure}[ht!]
$$   % [inline block 9: 3 envs, 6736 chars -> data_tex | \begin{tikzpicture}[scale=0.35]  \path(0,0) coordinate (origin2);...]

 $$
\caption{Examples of $\color{magenta}P$ and $\color{cyan}Q$ adjacent, distant, and ${\color{cyan}Q }\prec \color{magenta}  P$ respectively.}
\label{adjdistcover}
\end{figure}

\subsection{Removable and addable Dyck paths}

Now we fix a partition $\mu\in \mptn$.  Recall that we identify any pair of Dyck paths which have  the same   content intervals.

\begin{defn}
Let $\mu \in \mptn$ and $P$ be a Dyck path. We say that $P$ is a {\sf removable  Dyck path} from $\mu$ if 
 there is a representative $P_{b}$ of $P$   such that $\la  :=  \mu\setminus P_b\in \mptn$.
 In this case we will write $\la = \mu-P$. (Note that this is well-defined as if $P_b$ exists then it is unique). 
 We define the set ${\rm DRem}(\mu)$ to be the set of all removable Dyck paths from $\mu$.
 
 We say that $P$ is an {\sf addable Dyck path} of $\mu$ if there is a representative $P_b$ of $P$ such that $\la := \mu \sqcup P_b \in \mptn$. In this case we will write $\la = \mu + P$. (Note again that this is well-defined as if $P_b$ exists then it is unique). We define the set ${\rm DAdd}(\mu)$ to be the set of all addable Dyck paths of $\mu$.  \end{defn}

 \begin{prop}\label{bijection}
Fix $\mu\in \mptn$.  There is a bijection between the set of cups in $e_\mu$ (or in $\underline{\mu}$) and the set ${\rm DRem}(\mu)$.
 \end{prop}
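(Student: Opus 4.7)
The plan is to construct the bijection geometrically by reading the cup diagram along the northern perimeter of the Russian Young diagram of $\mu$. Recall that the weight of $\mu$ is read along this perimeter, where a $\down$ is a north-east edge and an $\up$ is a south-east edge, so each vertex of the weight diagram corresponds to a corner of the perimeter, and the content of the neighbouring tile is the $x$-coordinate of that vertex.

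\textbf{From cups to removable Dyck paths.} Given a cup $c$ in $\underline{\mu}$ joining vertices $i$ (labelled $\down$) and $j$ (labelled $\up$) with $i<j$, the construction of $\underline{\mu}$ forces every vertex strictly between $i$ and $j$ to be paired by a cup nested inside $c$; this matched-parenthesis condition is equivalent to saying that between positions $i$ and $j$ the perimeter is a concatenation of bumps (a $\down$ immediately followed by $\up$, possibly nested). I would let $P_c$ be the set of tiles of $\mu$ sitting immediately below the arc of the perimeter from $i$ to $j$, ordered along the arc. Then $P_c$ is a path from a tile of content $i+\tfrac12$ to a tile of content $j-\tfrac12$, and each nested inner cup gives a local peak of the path; so $P_c$ has equal start and end heights with all heights in between weakly larger, i.e.\ is a Dyck path. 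Swapping the two labels of $c$ and replacing $\mu$ by $\mu\setminus P_c$ gives the corresponding new weight diagram, so $P_c\in\mathrm{DRem}(\mu)$ with representative $P_b = P_c$.

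\textbf{From removable Dyck paths to cups.} Conversely, if $P\in\mathrm{DRem}(\mu)$ with representative $P_b$ and $\lambda = \mu - P$, then the weight of $\lambda$ differs from that of $\mu$ exactly by interchanging a $\down$ at some position $i$ and an $\up$ at some position $j$ with $i<j$, where $i+\tfrac12={\sf first}(P)$ and $j-\tfrac12={\sf last}(P)$. The Dyck condition on $P$ guarantees that the perimeter arc from $i$ to $j$ in $\mu$ consists of nested bumps; translated back into the labels of $\mu$, this says that every vertex strictly between $i$ and $j$ can be paired off inside $\{i+1,\dots,j-1\}$ by an iterated matching of adjacent $\down\up$ pairs, which is precisely the rule producing $\underline{\mu}$. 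Hence when the algorithm for $\underline{\mu}$ reaches positions $i$ and $j$ they are still unpaired and adjacent among the unpaired vertices, so $\{i,j\}$ is a cup $c_P$ of $\underline{\mu}$.

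\textbf{Conclusion and obstacle.} It is immediate from the constructions that $c\mapsto P_c$ and $P\mapsto c_P$ are mutually inverse and that they respect the identification of Dyck paths by their content intervals (since both the endpoints of a cup and the interval $\underline{\sf cont}(P)$ only depend on those contents). The main thing to take care with is the well-definedness of the removable representative $P_b$: one must check that the path whose content interval equals $\{i+\tfrac12,\dots,j-\tfrac12\}$ and which sits immediately under the perimeter of $\mu$ really is a sub-multiset of tiles of $\mu$ and yields a partition after deletion. This is precisely what the nested-cup structure ensures, via an induction on the number of inner cups of $c$ (equivalently the number of peaks of $P_c$): the base case is an adjacent $\down\up$ pair, corresponding to a single removable tile; the inductive step strips off an innermost cup, which removes a smaller removable Dyck path of $\mu$ and reduces to a partition with one fewer cup.
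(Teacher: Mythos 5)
Your overall route is the same as the paper's (the paper, too, simply reads the bijection off the constructions), and your reverse direction is sound: if $P$ is removable then the boundary word strictly between $i$ and $j$ is a balanced word in which every prefix has at least as many $\down$'s as $\up$'s, so the matching algorithm pairs the interior vertices among themselves and then joins $i$ to $j$. The trouble is in the forward direction, exactly at the point you yourself flag as the crux. First, ``the set of tiles of $\mu$ sitting immediately below the arc of the perimeter'' does not, read literally, give the removable representative: at a concave corner of the perimeter the ribbon $\mu\setminus\la$ dips to a tile that shares no edge with the perimeter of $\mu$. In the paper's own example $\mu=(5^3,4,1)$, $\la=(4^2,3,1^2)$, the tile $[3,4]$ lies in $\mu\setminus\la$ although both of its northern neighbours $[4,4]$ and $[3,5]$ belong to $\mu$; the tiles that are edge-adjacent to the perimeter between the cup's endpoints are only six in number, omit the content $-1$, and do not form a path. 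The correct description is ``the topmost tile of $\mu$ of each content in $\{i+\tfrac12,\dots,j-\tfrac12\}$'' (equivalently, the translate of the boundary arc one step downwards); with that description removability is essentially immediate, since one deletes the top layer of $\mu$ over a content interval whose first and last boundary steps are $\down$ and $\up$.

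Second, the induction you propose for well-definedness fails as stated: stripping an innermost cup changes the cup diagram inside $c$, and the outer cup need not survive. For $\mu=(2^2)$, with weight $\down\down\up\up$ and cups $\{2,3\}$ nested inside $\{1,4\}$, removing the box under the inner cup gives $(2,1)$, whose weight is $\down\up\down\up$ and whose cups are $\{1,2\}$ and $\{3,4\}$ --- the same number of cups, and $\{1,4\}$ is no longer among them --- so there is no induction hypothesis available for $c$. Either argue directly as above (top layer over a content interval), or induct on the breadth $b(P)$ using the contraction/dilation maps $\varphi_k$ of Section 6, which reduce everything to the incontractible case $b(P)=1$, where the cup joins adjacent vertices and the path is a single removable tile.
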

 \begin{proof}
 As observed at the beginning of this section, every cup in $e_\mu$ gives rise to a removable Dyck path. The fact that every removable Dyck path corresponds to a cup follows from the construction of the cup diagram $\underline{\mu}$.  
 \end{proof}
 
 \begin{defn}
 Let $\mu \in \mptn$. For each $P\in {\rm DRem}(\mu)$, we define $P_{sf}$ to be its representative given by the set of tiles intersecting the corresponding cup in $e_\mu$ as shaded in pink in Figure \ref{example-2}.
 \end{defn}

\begin{lem}\label{Remproperties}
Let $\mu\in \mptn$ and let $P,Q\in {\rm DRem}(\mu)$. Then either $P$ covers $Q$, or $Q$ covers $P$, or $P$ and $Q$ are distant.
\end{lem}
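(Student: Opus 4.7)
The plan is to leverage Proposition 4.3, which identifies the removable Dyck paths from $\mu$ with the cups of $\underline{\mu}$ (equivalently, of $e_\mu$). Since cup diagrams are non-crossing by construction, any two distinct cups $c_P$ and $c_Q$ in $\underline{\mu}$ must be either (i) strictly nested or (ii) disjoint. The key is then to translate each of these geometric possibilities into the combinatorial relation ``covers'' or ``distant'' on the level of content intervals.

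The first step is to make the dictionary between cups and content intervals explicit. A cup in $\underline{\mu}$ connecting the two vertices at horizontal positions $\alpha < \beta$ (with $\alpha,\beta \in \tfrac{1}{2}+\mathbb{Z}$) corresponds to a Dyck path $P$ whose tile contents form exactly the integer interval $\{\alpha+\tfrac{1}{2},\alpha+\tfrac{3}{2},\ldots,\beta-\tfrac{1}{2}\}$; in particular ${\sf first}(P)=\alpha+\tfrac{1}{2}$ and ${\sf last}(P)=\beta-\tfrac{1}{2}$. This is immediate from the description of $e_\mu$ on tiles of $\mu$ recalled in Section~2.2.

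Next I would handle the two cases. In case (i), say $\alpha_Q<\alpha_P<\beta_P<\beta_Q$ (all inequalities strict, since distinct cups have disjoint endpoint sets). Translating using the dictionary gives ${\sf first}(Q)<{\sf first}(P)$ and ${\sf last}(P)<{\sf last}(Q)$, so $P\prec Q$ by definition. In case (ii), without loss of generality $\beta_P<\alpha_Q$. Since both are distinct half-odd integers, $\alpha_Q-\beta_P\geq 1$, and hence the minimum distance between a tile of $P$ and a tile of $Q$ equals ${\sf first}(Q)-{\sf last}(P)=(\alpha_Q+\tfrac{1}{2})-(\beta_P-\tfrac{1}{2})\geq 2$, so $P$ and $Q$ are distant.

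There is no real obstacle here beyond careful bookkeeping: the only subtlety is making sure the ``adjacent'' possibility is properly ruled out in the disjoint case, which is why the half-integer arithmetic forces a gap of at least $2$ rather than $1$ between the content intervals. Once the dictionary between vertex positions of cups and contents of tiles is in place, the lemma follows from the non-crossing property of cup diagrams.
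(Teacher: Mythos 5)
Your argument is correct and is essentially the paper's proof: the paper also deduces the lemma directly from the cup--Dyck path bijection of Proposition~4.3, leaving the non-crossing dichotomy (nested versus disjoint cups) and its translation into ``covers'' versus ``distant'' implicit. Your write-up simply makes explicit the dictionary between cup endpoints at positions $i-\tfrac{1}{2}$, $j+\tfrac{1}{2}$ and the content interval $\{i,\dots,j\}$, together with the half-integer bookkeeping that forces a content gap of at least $2$ in the disjoint case.
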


\begin{proof}
This follows directly from \cref{bijection}
\end{proof}

\begin{defn}
Let $\mu\in \mptn$ and $P,Q\in {\rm DRem}(\mu)$. We say that $P$ and $Q$ {\sf commute} if $P\in {\rm DRem}(\mu -Q)$ and $Q\in {\rm DRem}(\mu - P)$.
\end{defn}

\begin{lem}
Let $\mu\in \mptn$ and $P,Q\in {\rm DRem}(\mu)$.  
%Then $P$ and $Q$ commute unless $P_{sf}\cap Q_{sf}\neq \emptyset$.
Then $P$ and $Q$ commute if and only if $P_{sf}\cap Q_{sf}=\emptyset$.
\end{lem}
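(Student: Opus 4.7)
My plan is to combine Lemma 4.3 with the bijection of Proposition 4.2 and analyse the cup structure in each case. Lemma 4.3 splits the problem into two regimes: either $P$ and $Q$ are distant, or (without loss of generality) $Q \prec P$.

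The distant case I dispose of first: the content intervals of $P$ and $Q$ are separated by at least $2$, so the sets $P_{sf}$ and $Q_{sf}$ are disjoint subsets of tiles of $\mu$, and the removal of $Q_{sf}$ does not disturb the tiles around $P$; in particular $P_{sf} \subseteq \mu - Q$ (and symmetrically), so $P$ and $Q$ commute.

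The substantive case is $Q \prec P$. Here I identify both removable Dyck paths with their corresponding cups in $\underline{\mu}$ via Proposition 4.2. The geometric observation driving the proof is that $P_{sf} \cap Q_{sf} \neq \emptyset$ if and only if $Q$'s cup is \emph{directly} nested inside $P$'s cup (no cup of $\underline{\mu}$ strictly between them); this is immediate from the description of $P_{sf}$ as the ring of tiles hugging $P$'s cup from inside.

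It then remains to link direct nesting to non-commutativity. The key device is that removing a Dyck path from $\mu$ swaps the two weight labels at the endpoints of its cup, and the new cup diagram $\underline{\mu - Q}$ is obtained by re-running the greedy pairing procedure on the swapped weight. A short combinatorial check shows that if $Q$ is directly nested in $P$ (possibly among sibling cups), the swap at $Q$'s endpoints forces $P$'s endpoints to pair off into two smaller cups, so $P$ is no longer a cup of $\underline{\mu - Q}$ and hence $P \notin {\rm DRem}(\mu - Q)$ by Proposition 4.2. Conversely, if some cup $R$ of $\underline{\mu}$ satisfies $Q \prec R \prec P$, then $R$ shields the endpoints of $P$ from the label swap occurring inside $R$'s region, both $P$ and $R$ remain cups in $\underline{\mu - Q}$, and therefore $P \in {\rm DRem}(\mu - Q)$; the symmetric argument gives $Q \in {\rm DRem}(\mu - P)$, so the paths commute. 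I expect the most delicate point to be the combinatorial verification of the greedy-pairing behaviour after the swap, particularly when $Q$ has several sibling cups directly under $P$.
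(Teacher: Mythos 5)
Your proposal is correct, but it takes a genuinely different route from the paper, which disposes of this lemma with ``this follows directly by definition'': there one reads both conditions straight off the tile picture of the two cups in $e_\mu$ --- if the cups meet no common tile, deleting the gravity representative of either path leaves the other cup (hence the other removable path) untouched, while a common tile forces the cups to be directly nested, in which case removing either path visibly destroys every removable representative of the other. You instead pass to the weight combinatorics: the trichotomy of \cref{Remproperties}, the bijection of \cref{bijection}, the fact that replacing $\mu$ by $\mu-Q$ swaps the labels at the endpoints of $Q$'s cup, and a re-run of the greedy pairing. This is heavier machinery for a statement the paper treats as immediate, but it buys something the definitional proof hides: it exhibits explicitly that when $Q$ is a direct child of $P$ the swap splits $P$'s cup into the two shorter cups flanking $Q$, which is exactly the decomposition $Q\setminus P=Q^1\sqcup Q^2$ that drives the non-commuting relation \eqref{noncommuting} later in the paper.

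On the two points you left open: the sibling cups are harmless. After the swap, every top-level block strictly inside $P$ other than $Q$ still pairs internally, so the only unmatched symbols at that level are the new $\up$ at the left end of $Q$ and the new $\down$ at its right end, and the greedy procedure necessarily marries these to the two endpoints of $P$, uniformly in the number of siblings; so the verification you flagged as delicate goes through without case analysis. Your asserted equivalence ``$P_{sf}\cap Q_{sf}\neq\emptyset$ iff the cups are directly nested'' is also true, with the shared tiles being precisely the first and last tiles of $Q_{sf}$: any strand passing strictly between the strands of $P$ and $Q$ over the diagonal of content ${\sf first}(Q)$ would have to be a cup $R$ with $Q\prec R\prec P$ (a sibling of $Q$ cannot reach that diagonal, since its contents avoid $\underline{\sf cont}(Q)$), so direct nesting forces $P$'s strand through that tile, and conversely such an $R$ separates the two strands and rules out any common tile. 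Finally, note that the shielding cup $R$ is only needed for $P\in{\rm DRem}(\mu-Q)$; the other half, $Q\in{\rm DRem}(\mu-P)$, holds for every nested pair, because swapping labels outside the span of $Q$ cannot disturb $Q$'s cup.
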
 
 
 \begin{proof}
 This follows directly by definition.
 \end{proof}
 
 \subsection{Oriented Temperley--Lieb diagrams and Dyck tiling}
 We now introduce  Dyck tilings and relate them to oriented arc diagrams.
 \begin{defn}\label{Dyckpair}
 Let $\lambda\subseteq  \mu\in \mptn$. A {\sf Dyck tiling} of the skew partition $\mu\setminus \la$ is a set $\{P^1,\dotsc,P^k\}$ of Dyck paths such that
 $$\mu\setminus \la = \bigsqcup_{i=1}^k P^i $$
 and for each $i\neq j$ we have either $P^i$ covers $P^j$ (or vice versa), or $P^i$ and $P^j$ are distant. 
 We call $(\la,\mu)$ a {\sf Dyck pair of degree $k$} if $\mu \setminus \la$ has a Dyck tiling with $k$ Dyck paths.
 \end{defn}

 We will see  that Dyck tilings are essentially unique, and as a consequence the degree of a Dyck pair is well defined. Examples of such tilings are given in \cref{cref-it} for the pair $(\la,\mu) = ((11,9,8,7,6,4,3^2,2^2), (11^7,8^3,2^2))$. We see that even though the tilings are different (as partitions of $\mu \setminus \la$), the Dyck paths appearing are the same (remember that we identify Dyck paths with the same content intervals).

\begin{figure}[ht!]
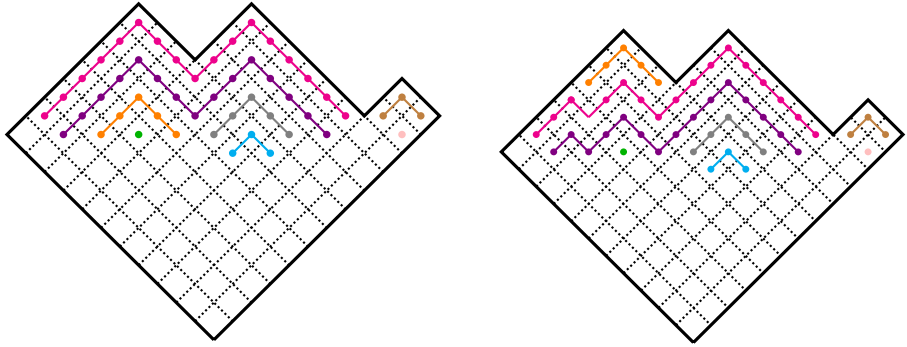
  
  $$
  % [inline block 10: 2 envs, 11267 chars -> data_tex | \begin{tikzpicture}[scale=0.35]  \path(0,0) coordinate (origin2);...]
 
$$ 
  \caption{Two of the twelve   Dyck tilings of shape $
    (11^7,8^3,2^2)
    \setminus (11,9,8,7,6,4,3^2,2^2)$. 
  Compare with \cref{cref-it2}.
   }
  \label{cref-it}
  \end{figure}

 \begin{lem}\label{tilingrem}
 Let $\lambda \subseteq \mu\in \mptn$ with Dyck tiling $\mu\setminus \la = \sqcup_{i=1}^k P^i$ as in \cref{Dyckpair}. Then $P^i\in {\rm DRem}(\mu)$ for all $1\leq i \leq k$.
  \end{lem}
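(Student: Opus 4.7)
The plan is to prove the lemma by induction on the number $k$ of Dyck paths in the tiling. For $k = 1$, $\mu \setminus \la = P^1$, so $\la = \mu \setminus P^1 \in \mptn$ directly witnesses $P^1 \in {\rm DRem}(\mu)$ with representative $P^1_b = P^1$.

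For $k \geq 2$, the first task is to identify a ``topmost'' path which is already directly removable from $\mu$. Choose $P^j$ minimal in the covering partial order $\prec$ (i.e., $P^j$ covers no other path in the tiling); such a minimal element exists since $\prec$ is a partial order on a finite set. I would then verify that the tile-representative of $P^j$ in the tiling is removable from $\mu$, i.e.\ $\mu - P^j \in \mptn$. This amounts to checking that for each tile $T \in P^j$, any tile $T'$ of $\mu$ directly above $T$ (the NW or NE neighbour in Russian convention) either lies in $P^j$ (as the next tile along the path) or is not in $\mu$ at all. The content of $T'$ is $\mathrm{cont}(T) \pm 1$, so one argues in cases: if this content lies in $[a_j, b_j]$ then any other path $P^i$ containing $T'$ would overlap $P^j$'s contents, forcing $P^i$ to cover $P^j$ by minimality, which places $P^i$'s tiles strictly below $P^j$'s at those contents --- contradicting $T'$ being directly above $T$; if the content is $a_j-1$ or $b_j+1$ (only possible at an endpoint of $P^j$) then no other path of the tiling can have a tile at that content, since such a path would be neither distant from nor nested with $P^j$, violating the tiling axiom; finally $T' \in \la$ is ruled out because the partition property of $\la$ would then force the tile $T$ into $\la$, contradicting $T \in P^j \subseteq \mu \setminus \la$.

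Next, set $\mu' = \mu - P^j \in \mptn$; the remaining paths $\{P^i\}_{i \neq j}$ form a Dyck tiling of $\mu' \setminus \la$ with $k-1$ paths, so by induction each $P^i \in {\rm DRem}(\mu')$. It remains to lift each such statement to $P^i \in {\rm DRem}(\mu)$. By minimality of $P^j$ and Lemma \ref{Remproperties}-type reasoning on the tiling, either $P^i$ and $P^j$ are distant, in which case the inductive representative of $P^i$ in $\mu'$ remains removable in $\mu$ unchanged (since $P^j$ only modifies $\mu$ at contents in $[a_j,b_j]$, disjoint from $[a_i,b_i]$); or $P^i$ covers $P^j$, in which case $[a_j,b_j] \subsetneq [a_i, b_i]$ and one constructs a new representative of $P^i$ in $\mu$ by splicing together the $\mu'$-representative of $P^i$ for contents in $[a_i,b_i] \setminus [a_j,b_j]$ with the tile-representative of $P^j$ itself for contents in $[a_j,b_j]$. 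One then checks that this spliced set is a Dyck path of content interval $[a_i,b_i]$ and that it is removable from $\mu$.

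The main obstacle is this final lifting step in the ``covers'' case: verifying that the spliced set has heights forming a valid mountain shape (in particular, that the heights change by exactly $\pm 1$ across the boundary contents $a_j - 1, a_j$ and $b_j, b_j + 1$) and that the resulting set is removable from $\mu$. Both checks come down to using the explicit heights determined by $P^j$ being removable from $\mu$ (established above) and $P^i$ being removable from $\mu'$ (by induction), together with the strict containment $[a_j, b_j] \subsetneq [a_i, b_i]$ which ensures the ``bump'' contributed by $P^j$ sits properly inside the mountain of $P^i$.
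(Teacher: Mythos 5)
There is a genuine gap, and it comes from having the geometry of the covering order backwards. In a Dyck tiling, if $P^i$ covers $P^j$ (i.e.\ $P^j\prec P^i$, so $P^i$ has the \emph{wider} content interval), then $P^i$ sits physically \emph{above} $P^j$ at their shared contents --- this is visible in \cref{ZZZ} and is forced by the Dyck condition together with the skew-shape structure. Consequently, a path that ``covers no other path'' (your $\prec$-minimal choice of $P^j$) is an \emph{innermost, bottom} path, not a ``topmost'' one, and your key claim that its tile-representative in the tiling is removable from $\mu$ is false. Concretely, take $\mu=(2^2)$, $\la=\varnothing$ with the tiling $P^1=\{[1,2],[2,2],[2,1]\}$, $P^2=\{[1,1]\}$: here $P^2$ covers nothing, but $\mu\setminus\{[1,1]\}$ is not a partition, so your verification step fails; the precise point where your argument breaks is the assertion that a covering path's tiles lie ``strictly below'' $P^j$'s --- it is exactly the opposite. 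The error then propagates: $\mu'=\mu-P^j$ is not obtained by deleting the tiling tiles of $P^j$ (and if you instead delete the removable representative $\{[2,2]\}$, the remaining tiling paths are no longer contained in $\mu'\setminus\la$), so the inductive set-up collapses. Note also that even the dual choice (a path covered by no other) would not match your case analysis: then the lifting step must treat the case ``$P^j$ covers $P^i$'', which your splicing argument (written only for ``$P^i$ covers $P^j$'') does not address, and the removability of the tile-representative still needs an argument ruling out a \emph{covered} path's tile sitting directly above $P^j$.

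For comparison, the paper avoids both difficulties by a different choice of $P^j$: it picks a removable corner $[x,y]$ of $\mu$ lying in $\mu\setminus\la$ and takes $P^j$ to be a path through such a corner of minimal length $|P^j|$. Removability of that tile-set is then immediate, since any obstructing tile would lie on a path $Q$ covering $P^j$, whose tile on the diagonal of $[x,y]$ would sit above the corner $[x,y]$ --- a contradiction. The inductive lifting then has to handle \emph{both} nesting directions: when $P^i$ covers $P^j$ one uses $|P^i|\ge|P^j|+4$ and shifts tiles up (\cref{YYY}), and when $P^j$ covers $P^i$ one shifts $P^i$ up into the removed strip and rules out failure via an adjacency contradiction with the tiling axiom (\cref{ZZZ}). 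If you want to pursue your route, you would need to (i) choose $P^j$ maximal rather than minimal for the covering order, (ii) supply the argument that no covered path can protrude above it, and (iii) add the missing ``$P^j$ covers $P^i$'' case to the lifting step.
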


\begin{proof}
We prove it by induction on $k$. If $k=0$ there is nothing to prove. If $k=1$ then $\mu \setminus \la = P^1$ and so $P^1\in {\rm DRem}(\mu)$ as required. Now let $k\geq 2$ and assume that the result holds for $k-1$. Pick a removable tile $[x,y]$ in $\mu\setminus \la$ such that it belongs to some $P^j$ with $|P^j|$ minimal. Then we claim that $P^j\in {\rm DRem}(\mu)$. Indeed, if there were any tile $[r,c]$ above $P^j$ preventing it from being removable, then by the definition of Dyck pair and the minimality of $P^j$, we would have that $[r,c]$ belongs to a Dyck path $Q$ which covers $P^j$. But this would contradict the fact that $[x,y]$ is removable.
It remains to show that $P^i\in {\rm DRem}(\mu)$ for all $i\neq j$. Now, we have that $(\mu-P^j, \la)$ is a Dyck pair and so by induction $P^i\in {\rm DRem}(\mu-P^j)$ for all $i\neq j$. 
Fix $i\neq j$. If $P^i$ and $P^j$ are distant, then we have $P^i\in {\rm DRem}(\mu)$ as required.  If $P^i$ covers $P^j$ then we must have $|P^i|\geq |P^j|+4$, as it is impossible to have a partition $\mu$ and Dyck paths $Q, Q'$ with $|Q'|=|Q|+2$, $Q\in {\rm DRem}(\mu)$ and $Q'\in {\rm DRem}(\mu-Q)$. This means that we can shift the tiles of $P^j\in {\rm DRem}(\mu-P^i)$ with the same contents as those of $P^i\in {\rm DRem}(\mu)$ one step up and we get an equivalent Dyck path which is now removable from $\mu$ as required. This is illustrated in   \cref{YYY}.
Finally, suppose $P^j$ covers $P^i$. In this case we can again shift the tiles of $P^i\in {\rm DRem}(\mu - P^j)$ one step up so that it is now a subset of $P^j\in {\rm DRem}(\mu)$. We claim that this subset is also removable. If not, then we would have some $P^l$ which is adjacent to $P^i$, contradicting the fact that $(\la, \mu)$ is a Dyck pair. This case is illustrated in  \cref{ZZZ}.
\end{proof}

\begin{figure}[ht!]
$$\vspace{-1cm}   % [inline block 11: 4 envs, 12006 chars -> data_tex | \begin{tikzpicture}[scale=0.35]  \path(0,0) coordinate (origin2);...]
 
  $$
  
  \vspace{-1cm}
\caption{In both diagrams we depict  examples of  Dyck 
paths $\color{cyan}P^j$ and $\color{magenta}P^i$ such that the former  covers the latter.  On the left we see that  $\color{magenta}P^i$  is removable.  On the right we make $\color{magenta}P^i$ slightly larger so as to contradict removability:  in this case we must also include the 
Dyck path $\color{orange}P^l$ by our assumption that $\la$ is a partition;  however $\color{orange}P^l$ and  $\color{magenta}P^i$ are adjacent, a contradiction. 
}  \label{ZZZ}
\end{figure}

\begin{thm}\label{TLDyck}
Let $\la, \mu\in \mptn$. Then $\underline{\mu}\la$ is oriented if and only if $(\la,\mu)$ is a Dyck pair.
\end{thm}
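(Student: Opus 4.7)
The plan is to prove both directions simultaneously by induction on the degree $k$. The base case $k=0$ forces $\lambda = \mu$, and both conditions hold trivially (the empty Dyck tiling covers $\mu \setminus \mu = \varnothing$, while $\underline{\mu}\mu$ is oriented of degree $0$).

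The main tool, which I would establish first, is a local compatibility lemma: for any $P \in {\rm DRem}(\mu)$ corresponding via Proposition~\ref{bijection} to a cup $c$ in $\underline{\mu}$, the cup diagram $\underline{\mu - P}$ agrees with $\underline{\mu}$ outside of $c$; that is, every other cup of $\underline{\mu}$ reappears in $\underline{\mu - P}$ connecting the same pair of vertices. This follows by inspecting the greedy cup-drawing algorithm: the weight diagrams of $\mu$ and $\mu - P$ differ only by swapping the two endpoints of $c$ from $\down\up$ to $\up\down$, so these endpoints become rays in $\underline{\mu - P}$ without disturbing any other pairing performed by the algorithm.

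For the forward direction, suppose $\underline{\mu}\lambda$ is oriented of degree $k \geq 1$ and let $c_1, \dotsc, c_k$ denote its swapped cups, with corresponding removable Dyck paths $P^1, \dotsc, P^k \in {\rm DRem}(\mu)$. Choose $c_1$ to contain no other swapped cup; this is possible because the cups of $\underline{\mu}$ are non-crossing and hence partially ordered by nesting. By the local compatibility lemma, $\underline{\mu - P^1}\lambda$ is oriented with swapped cups $c_2, \dotsc, c_k$, so of degree $k-1$. By induction $(\lambda, \mu - P^1)$ is a Dyck pair of degree $k-1$ with tiling $\{P^2, \dotsc, P^k\}$; adjoining $P^1$ gives $\mu \setminus \lambda = \bigsqcup_{i=1}^{k} P^i$, and since each $P^i \in {\rm DRem}(\mu)$, Lemma~\ref{Remproperties} ensures the pairwise nested-or-distant condition, yielding the required degree-$k$ Dyck tiling. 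The backward direction is symmetric: given a Dyck tiling $\{P^1, \dotsc, P^k\}$ of $\mu \setminus \lambda$, pick $P^1$ minimal in the covering order; by Lemma~\ref{tilingrem} it is removable, so $(\lambda, \mu - P^1)$ is a Dyck pair of degree $k-1$, inductively giving $\underline{\mu - P^1}\lambda$ oriented of degree $k-1$. The local compatibility lemma, applied in reverse, upgrades this to $\underline{\mu}\lambda$ oriented of degree $k$, with the extra swapped cup being the one associated to $P^1$. The principal obstacle is establishing the local compatibility lemma: though intuitively clear, it requires a careful case analysis verifying that the greedy cup-drawing procedure on the two weight diagrams produces matching cups away from $c$.
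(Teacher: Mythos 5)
Your ``local compatibility lemma'' is false, and with it both halves of your induction. Take $m=n=2$, $\mu=(2^2)$ and $P$ the removable Dyck path of breadth $1$ (the single tile of content $0$), which under \cref{bijection} corresponds to the inner cup of $\underline{\mu}$: the weight of $\mu$ is $\down\down\up\up$ and $\underline{\mu}$ consists of two nested cups, while the weight of $\mu-P=(2,1)$ is $\down\up\down\up$ and $\underline{(2,1)}$ consists of two side-by-side cups. So the outer cup of $\underline{\mu}$ does \emph{not} reappear in $\underline{\mu-P}$: removing a cup re-pairs every cup nesting over it, and your greedy-algorithm argument only controls the vertices strictly between the endpoints of $c$, not those outside. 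This kills the forward induction exactly as you set it up: for $\la=\varnothing$ the diagram $\underline{(2^2)}\varnothing$ is oriented of degree $2$ (both nested cups are swapped), your rule picks $c_1$ to be the inner cup and $P^1$ its Dyck path, yet $\underline{(2,1)}\varnothing$ is not oriented at all (the cup joining the two leftmost vertices meets two $\up$'s), so the claimed degree-$(k-1)$ statement fails. The backward direction breaks on the same example: in the Dyck tiling of $(2^2)\setminus\varnothing$ the one-tile path sits at the bottom and is covered by the three-tile hook, your ``minimal in the covering order'' choice is that small path, and $(\varnothing,(2,1))$ is not a Dyck pair (indeed $(2,1)$ admits no Dyck tiling), so the inductive hypothesis cannot be invoked; note also that the tiling's copy of $P^1$ need not coincide with the representative that is actually removed from $\mu$.

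The underlying issue is the direction in which you peel: only cups \emph{not containing} $c$ survive the removal of $c$ (their endpoints and everything strictly between them are untouched), whereas enclosing cups get re-matched. The paper therefore works from the outside in: in the forward direction it orders the swapped cups so that covering paths come first and checks $P^i\in{\rm DRem}(\mu-P^1-\dots-P^{i-1})$, with \cref{Remproperties} supplying the nested-or-distant condition; in the converse it avoids peeling altogether by quoting \cref{tilingrem}, which guarantees that every path of a Dyck tiling is already removable from $\mu$ itself, so the corresponding cups all exist in $\underline{\mu}$ and reorienting them produces $\la$. Your induction could be repaired by choosing $c_1$ maximal (outermost) among the swapped cups, respectively $P^1$ maximal in the covering order, and proving the correct, weaker compatibility statement (only cups nested inside, or distant from, $c_1$ survive); as written, the argument does not go through.
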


\begin{proof}
Assume that $\underline{\mu} \la$ is oriented. Then the weight $\lambda$ is obtained from the weight $\mu$ by swapping the labels of pairs corresponding to some of the cups in $e_\mu$. Let $P^1, \ldots, P^k$ be the Dyck paths corresponding to these cups. We list these in order such that if $P^i\prec P^j$ then $j<i$. Then it's easy to see that $P^i\in {\rm DRem}(\mu-P^1 - \ldots - P^{i-1})$ for all $i$ and $\la = \mu - P^1 - \ldots - P^k$. 
It follows from \cref{Remproperties} that $\mu\setminus \la = \sqcup_{i=1}^k P^i$ is a Dyck tiling.
Conversely, suppose that $\la \subseteq \mu$ with $\mu\setminus \la = \sqcup_{i=1}^k P^i$ a Dyck tiling. Then it follows from \cref{tilingrem} that each $P^i\in {\rm DRem}(\mu)$ and so $\underline{\mu}\la$ is oriented.
\end{proof}

\begin{cor}\label{corDyckpairs} Let $\la\subseteq \mu$. Then $\mu\setminus \la = \sqcup_{i=1}^k P^i$ where the $P^i$'s are Dyck paths is a Dyck tiling if and only if $P^i\in {\rm DRem}(\mu)$ for all $i$. 
In particular, the set of Dyck paths $\{P^i \, : \, 1\leq i\leq k\}$ is unique. Moreover, in this case we have that ${\rm deg}(\underline{\mu}\la) = k$.
\end{cor}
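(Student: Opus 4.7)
The plan is to read the corollary off directly from \cref{tilingrem}, \cref{Remproperties}, \cref{bijection}, and the proof of \cref{TLDyck}, with no new combinatorial input required. All three claims (the equivalence, uniqueness, and the degree formula) are clean synthesis of these results.

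For the equivalence, the forward direction ``Dyck tiling $\Rightarrow$ every $P^i \in {\rm DRem}(\mu)$'' is precisely the content of \cref{tilingrem}. For the converse, suppose $\mu\setminus \la = \sqcup_{i=1}^k P^i$ with each $P^i \in {\rm DRem}(\mu)$. I apply \cref{Remproperties} to every pair $(P^i, P^j)$ with $i \neq j$: since both paths are removable from $\mu$, they must either stand in a covering relation or be distant. This is exactly the defining condition of a Dyck tiling in \cref{Dyckpair}, so the decomposition is a Dyck tiling.

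For uniqueness of the underlying set $\{P^i\}$ and for the degree formula, the key observation is the bijection of \cref{bijection} between ${\rm DRem}(\mu)$ and the cups of $e_\mu$, combined with the argument in the proof of \cref{TLDyck}. That proof shows that whenever $\underline{\mu}\la$ is oriented, $\la$ is obtained from $\mu$ by flipping the labels of the endpoints of a uniquely determined subset $C$ of cups of $e_\mu$, and the Dyck paths appearing in any associated Dyck tiling are precisely the images of $C$ under the bijection of \cref{bijection}. Since $C$ depends only on the weights $\la$ and $\mu$, the set $\{P^i\}$ is determined by $\la$ and $\mu$ alone. Finally, $\deg(\underline{\mu}\la)$ is defined as the number of clockwise oriented cups in $\underline{\mu}\la$, and these are exactly the flipped cups, i.e.\ the elements of $C$; hence $\deg(\underline{\mu}\la) = |C| = k$.

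I do not expect any genuine obstacle here, since the corollary is essentially a repackaging of \cref{tilingrem}, \cref{Remproperties}, and \cref{TLDyck}; the main combinatorial content lives in those earlier results. The only point worth being careful about is the identification, via \cref{bijection}, of ``flipped cups'' with ``removed Dyck paths'', but this identification has already been used implicitly in the proof of \cref{TLDyck}.
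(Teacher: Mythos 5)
Your proof is correct and follows exactly the route the paper intends: the paper states \cref{corDyckpairs} without proof as an immediate consequence of \cref{tilingrem}, \cref{Remproperties} and the cup-flipping argument in the proof of \cref{TLDyck}, which is precisely how you assemble it. The identification of flipped cups with removed Dyck paths via \cref{bijection}, which you single out as the only delicate point, is indeed the mechanism already used in the proof of \cref{TLDyck}, so nothing further is needed.
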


  \subsection{ Dyck paths generated by  tiles.}
We will need one last piece of combinatorics to describe our quadratic presentation for the Hecke category.
     
\begin{defn}
Fix $\mu \in \mathscr{P}_{m,n}$ and $[r,c]\in \mu$. 
Let $l, k$ be the maximal non-negative integer such that $[r-i, c+i] \in \mu$ for all $0\leq i\leq l$, $[r-i+1, c+i]\in \mu$ for all $1\leq i\leq l$ and $[r+j, c-j]\in \mu$ for all $0\leq j\leq k$, $[r+j, c-j+1]\in \mu$ for all $1\leq j\leq k$. Then define the {\sf Dyck paths generated by the tile $[r,c]\in \mu$}, denoted by $\langle r,c \rangle_\mu$, to be the path 
$$[r-l, c+l], [r-l+1, c+l] ,\ldots , [r,c] , \ldots , [r+k, c-k].$$
\end{defn}  

Note that the Dyck path generated by a tile of a partition $\mu$ may or may not be in ${\rm DRem}(\mu)$ as illustrated in Figure \ref{genDyck}.

\begin{figure}[ht!]
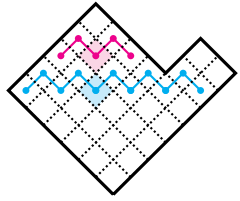


$$
% [inline block 12: 1 envs, 2782 chars -> data_tex | \begin{tikzpicture}[scale=0.325]  \path(0,0) coordinate (origin2);...]
 
$$

\caption{The path $\langle r,c\rangle _\mu$ for $[r,c]  =\color{magenta} [4,5]$
and 
$\color{cyan} [3,4]$ with $\mu=(6^5,2^2)$. 
We note that ${\color{magenta} \langle 4,5\rangle_\mu } \in {\rm DRem}	(\mu)
$, however 
${\color{cyan} \langle 3,4\rangle_\mu } \notin {\rm DRem}	(\mu)
$.
}
\label{genDyck}
\end{figure}

\section{ Generators   for the Hecke category }
\label{gens}

In this section, we lift the combinatorics of 
\cref{dyckgens} to provide a new set of generators for the Hecke category $\mathcal{H}_{m,n}$.  These generators are not `monoidal', 
 but all lie in degree $0$ or $1$ (this is the first step in providing a quadratic presentation). 
 
 \smallskip
 
 \subsection{Soergel diagrams from  oriented Temperley--Lieb diagrams }
We now revisit the classical definition of the light leaves basis starting from oriented Temperley--Lieb diagrams.  This material is covered in detail (from a slightly different perspective) in \cite{compan}.  
 \begin{defn}
 We   define up and down operators on diagrams as follows.
 Let $D$ be any Soergel graph with northern colour sequence $\sts\in {\rm Std}(\alpha)$ for some $\alpha\in \mptn$.
  \begin{itemize}[leftmargin=*]
\item    
Suppose that
$\csigma \in {\rm Add}(\alpha)$.  We define  $$
\qquad {\sf U}^1_\csigma(D)=\begin{minipage}{1.85cm}\begin{tikzpicture}[scale=1.5]
\draw[densely dotted, rounded corners] (-0.5,0) rectangle (0.75,0.75) node [midway] {$ D$} ;
%\draw[magenta,line width=0.08cm](0,0)--++(90:0.75);
 \end{tikzpicture}\end{minipage}
%\otimes
 \begin{minipage}{0.75cm}\begin{tikzpicture}[scale=1.5]
\draw[densely dotted, rounded corners] (-0.25,0) rectangle (0.25,0.75);
\draw[magenta,line width=0.08cm](0,0)--++(90:0.75);
 \end{tikzpicture}\end{minipage}
 \qquad 
 \qquad 
 {\sf U}^0_\csigma(D)=
\begin{minipage}{1.85cm}\begin{tikzpicture}[scale=1.5]
\draw[densely dotted, rounded corners] (-0.5,0) rectangle (0.75,0.75) node [midway] {$D$} ;
%\draw[magenta,line width=0.08cm](0,0)--++(90:0.75);
 \end{tikzpicture}\end{minipage}
% \otimes 
\begin{minipage}{0.75cm}\begin{tikzpicture}[scale=1.5]
\draw[densely dotted, rounded corners] (-0.25,0) rectangle (0.25,0.75);
\draw[magenta,line width=0.08cm](0,0)--++(90:0.3525) coordinate (hi);
\draw[fill=magenta,magenta] (hi) circle (3pt);
 \end{tikzpicture}\end{minipage}
$$
 
\item   Now suppose that  
${\color{magenta}[r,c]} \in {\rm Rem}(\alpha)$   for $\alpha \vdash a$
and with ${\color{magenta}r-c}=\csigma \in S$.  
%We pick any  $\stt \in  \Std (\alpha-\csigma)$ and let $\stt \otimes \csigma  \in \Std(\alpha)$ be obtained in the obvious fashion. 
We let  $\stt \in  \Std (\alpha-{\color{magenta}[r,c]})$ be defined as follows: 
if $\sts{\color{magenta}[r,c]}=k$ then we let  
 $\stt^{-1}(j)=\sts^{-1}(j)$ for $ 1\leq j <k$ and 
  $\stt^{-1}(j-1)=\sts^{-1}(j)$ for $ k<j \leq a $. 
We  let $\stt \otimes \csigma  \in \Std(\alpha)$ be defined by $
(\stt \otimes \csigma){\color{magenta}[r,c]}=a$ and $(\stt\otimes \csigma)[x,y] =\stt[x,y]$ otherwise. 
We define 
$$
 {\sf D}_\csigma^0(D)= 
\begin{minipage}{1.85cm}% [inline block 13: 3 envs, 2538 chars -> data_tex | \begin{tikzpicture}[scale=1.5] \draw[densely dotted, rounded corners] (-1,0) rectangle (0.75,0.75) node [midway] {${\sf ...]
\right.
  $$
 Now $D_{\ell(\mu)}$ has southern colour sequence $\stt_\mu$ and northern colour sequence some $\sts\in {\rm Std}(\la)$. We then define $D_\mu^\la = {\sf braid}_\sts^{\stt_\la} \circ D_{\ell(\mu)}$. 
 We also define $D^\mu_\la = (D^\la_\mu)^\ast$.

\end{defn}
 
%  We note that the  the definition of  $D^{\la }_\mu $ contains some  ambiguity, but it  is uniquely defined up to application of commuting braids (and so this is not important).
%

\begin{eg}
 In \cref{twopics} we provide an example of  the labelling of an
  oriented Temperley--Lieb diagram and the corresponding up-down construction of a basis element.

\end{eg}

\begin{figure}[ht!]
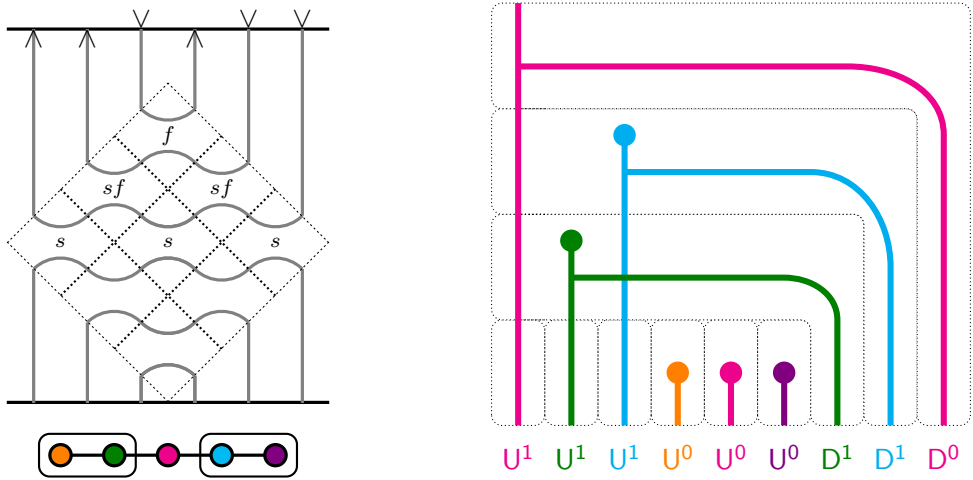

$$\qquad
 \begin{minipage}{4cm}% [inline block 14: 2 envs, 11986 chars -> data_tex | \begin{tikzpicture}[scale=1]    \path(0,0) coordinate (origin);...]
\end{minipage}
$$
\caption{ On the left we depict the   labelling of the   oriented  Temperley--Lieb diagram of shape $\la=(1)$ and $\mu=(3^3)$.
On the right we depict the unique  $D^\la_\mu$ for  $\stt_{(3^3)}\in \Std((3^3))$.  }
\label{twopics}
\end{figure}

 \begin{thm}\label{heere ris the basus}
 The  algebra   
$ \mathcal{H}_{m,n}  $  is a graded cellular (in fact quasi-hereditary) algebra with  graded cellular basis given by
\begin{equation}\label{basis}
\{D_\la^\mu D^\la_\nu \mid  \la,\mu,\nu\in \mptn \,\, \mbox{with}\,\,		(\la,\mu), (\la,\nu) \text{ Dyck pairs}  \}
\end{equation}
with 
$${\rm deg}( D_\la^\mu D^\la_\nu) = {\rm deg}(\la, \mu) + {\rm deg}(\la, \nu),$$
with respect to the involution $*$ and the partial order on $\mptn$ given by inclusion.
\end{thm}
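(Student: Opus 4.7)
The strategy is to identify the elements $D^\mu_\lambda$ with specific light leaves in the sense of Libedinsky--Williamson, and then to transport graded cellularity from the general light leaves framework via the Dyck combinatorics developed in Section~4.

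I would first verify that for each Dyck pair $(\lambda,\mu)$, the diagram $D^\lambda_\mu$ factors through the idempotent ${\sf 1}_{\stt_\lambda}$ by construction, and that its northern and southern colour sequences are $\stt_\lambda$ and $\stt_\mu$ respectively. The four operators $\{{\sf U}^1, {\sf U}^0, {\sf D}^0, {\sf D}^1\}$, applied in the order prescribed by the oriented tableau $\stt_\mu^\lambda$, realise a light leaf attached to the Bott--Samelson expression $\stt_\mu$; the four orientation labels $\{1, s, f, sf\}$ record, at each tile, whether the corresponding simple reflection is ``used as is'', capped with a dot, resolved by a fork, or by a fork-and-dot. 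A direct tally then yields $\deg(D^\mu_\lambda D^\lambda_\nu)=\deg(\lambda,\mu)+\deg(\lambda,\nu)$, since each ${\sf U}^0$ contributes $+1$, each ${\sf D}^1$ contributes $-1$, and all braids and the operators ${\sf U}^1,{\sf D}^0$ contribute $0$; by \cref{TLDyck} this matches the number of clockwise cups in $\underline{\mu}\lambda$.

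Next, I would pin down linear independence and spanning by a graded dimension count. By \cref{TLDyck} and \cref{corDyckpairs}, the number of Dyck pairs $(\lambda,\mu)$ of each graded degree is encoded by the $p$-Kazhdan--Lusztig polynomial ${}^p n_{\lambda,\mu}$ from Section~2.2. It follows from \cite{compan,compan2} that the graded dimension of ${\sf 1}_{\stt_\mu} \mathcal{H}_{m,n}{\sf 1}_{\stt_\nu}$ equals $\sum_\lambda {}^p n_{\lambda,\mu}\, {}^p n_{\lambda,\nu}$, so linear independence and spanning are simultaneously verified once we know the $D^\mu_\lambda D^\lambda_\nu$ are distinct nonzero vectors of the stated degrees—for which the essential uniqueness of Dyck tilings (\cref{corDyckpairs}) together with the bijection between cups of $\underline{\mu}$ and removable Dyck paths (\cref{bijection}) is the key combinatorial input.

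For cellularity, the filtration by the $\Bbbk$-submodules $\mathcal{H}_{m,n}^{\trianglerighteq\alpha}:=\mathrm{span}\{D^\mu_\lambda D^\lambda_\nu:\lambda\supseteq\alpha\}$ is manifestly two-sided: each basis element factors through ${\sf 1}_{\stt_\lambda}$ and composition on either side only permits $\lambda$ to move upwards in the Bruhat (inclusion) order, since compositions of light leaves are known to satisfy this triangularity. The anti-involution $*$ swaps $D^\mu_\lambda \leftrightarrow D^\lambda_\mu$ by the very definition $D^\mu_\lambda:=(D^\lambda_\mu)^\ast$, yielding the cellular duality. Quasi-heredity follows from the observation that the unique degree-zero basis element in the cellular layer indexed by $\lambda$ is ${\sf 1}_{\stt_\lambda}=D^\lambda_\lambda D^\lambda_\lambda$.

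The main obstacle is the identification step: while the general Libedinsky--Williamson theorem guarantees \emph{some} light-leaves basis indexed by pairs of subexpressions, one must check that the specific Dyck-combinatorial recipe above both produces genuine light leaves and exhausts the abstract basis. This is where the refinement of oriented Temperley--Lieb diagrams into Dyck tilings from Section~4 plays an essential role: it converts the abstract subexpression data into the concrete oriented-tableau data $\stt_\mu^\lambda$ on which our construction is founded.
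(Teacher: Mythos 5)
Your proposal is correct and follows essentially the same route as the paper: the paper's proof is a one-line observation that \eqref{basis} is a combinatorial rephrasing of the light leaves basis of Libedinsky--Williamson (constructed in general and reproven for $(S_{m+n},S_m\times S_n)$ in the cited references), which is exactly the identification you carry out. Your additional checks (the degree tally via the orientation labels, the graded dimension count against ${}^p n_{\lambda,\mu}$, and quasi-heredity from the degree-zero element ${\sf 1}_{\stt_\lambda}$) are consistent with the paper's framework but are subsumed in the citation the authors make.
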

\begin{proof}
This is simply a combinatorial rephrasing of the light leaves basis, constructed in full generality in  \cite{MR3555156,MR2441994} and 
reproven in the case of  $(W,P)= (S_{m+n}, S_m \times S_n)$ paper in \cite{MR4323501}.
\end{proof}

Note, in particular that the degree $0$ basis elements are given by $D_\la^\la = {\sf 1}_\la$, $\la\in \mptn$ and the degree $1$ basis elements are given by $D_\mu^\la$ and $D_\la^\mu$ for $\la, \mu \in \mptn$ with $\la = \mu - P$ for some $P\in {\rm DRem}(\mu)$.  We will show that these degree 0 and degree 1 elements generate $\mathcal{H}_{m,n}$.
But first we will describe an easy way of visualising products of light leaves basis elements directly from the oriented tableaux used to define them.

\subsection{Multiplying generators on the oriented tableaux}
We have seen that the Soergel graph $D_\mu^\la$ is completely determined by the oriented tableau $\stt_\mu^\la$. To visualise the multiplication of two such Soergel graphs directly from the oriented tableaux, we will want to consider pairs of  tableaux of the same shape. This can easily be done by adding one more possible orientation for tiles, namely $0$. When constructing the corresponding Soergel graph, whenever we encounter a tile with $0$-orientation we will simply tensor with the empty Soergel graph, that is we leave the graph unchanged (see \cref{blow-up} for two examples).

\begin{figure}[ht!]
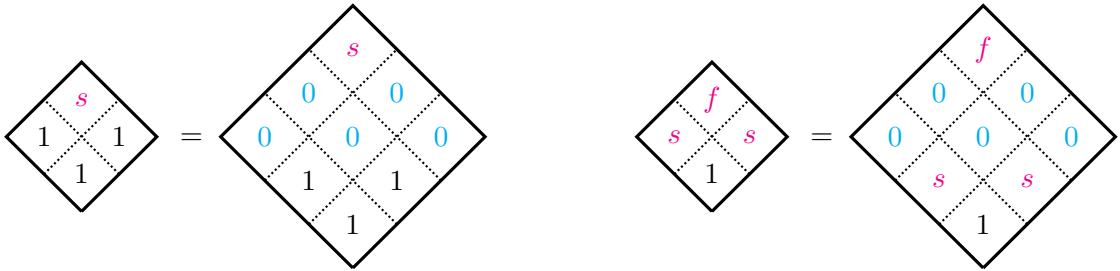


$$
\begin{minipage}{2.2cm}
% [inline block 15: 4 envs, 5744 chars -> data_tex | \begin{tikzpicture}[scale=0.7]   \path(0,0) coordinate (origin2);...]
 \end{minipage}
$$

\caption{Re-drawing tableau of shape $(2^2)$ as tableaux of shape $(3^3)$ where $(2^2)=(3^3)-\color{cyan}P$ and $\color{cyan}P_{sf}$ is depicted as the blue zeroes.}
\label{blow-up}
\end{figure}

For example, let  $P \in {\rm DRem}(\mu)$ and $Q\in {\rm DRem}(\mu-P)$ and assume for now that $P$ and $Q$ commute. We would like to be able to visualise the product $$D^{\mu - P -Q}_{\mu - P} D^{\mu - P}_\mu$$ on the oriented tableaux. So instead of considering the oriented tableau $\stt_{\mu - P}^{\mu - P - Q}$ as a labelling of the tiles of $\mu - P$, we can visualise it as a labelling of the tiles of $\mu$ with all tiles belonging to $P_{sf}$ having $0$-orientation. 
%The orientation of the other tiles remain unchanged except when $Q$ does not commute with $P$, in which case each tile with an $s$-orientation fall down one tile.  
The orientation of the other tiles remain unchanged.  
We can then easily multiply the elements  $D^{\mu - P -Q}_{\mu - P}$ with $D^{\mu - P}_\mu$ simply by  \lq stacking' the two oriented tableaux, without any need to apply a braid generator in between the two diagrams. 
An example is depicted in \cref{zeroorientation}.

\begin{figure}[ht!]
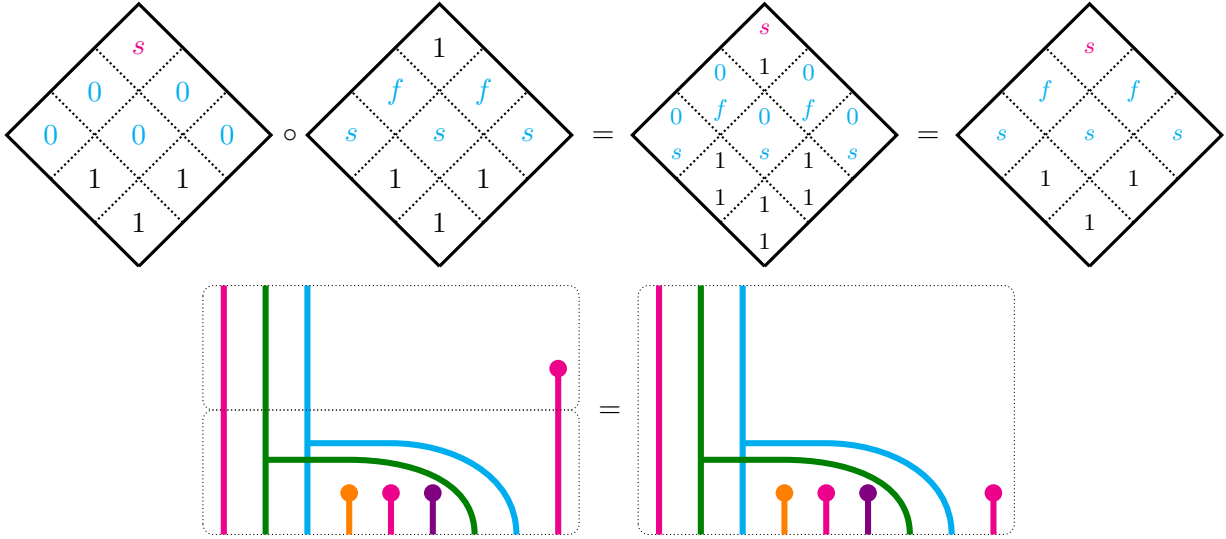


%
%$$
%% [inline block 16: 7 envs, 10986 chars -> data_tex | \begin{tikzpicture}[scale=0.7] % \path(0,0) coordinate (origin2);...]
\end{minipage}
$$
\caption{A product of commuting diagrams on tableaux.}
\label{zeroorientation}
\end{figure}

Now,  let  $P \in {\rm DRem}(\mu)$ and $Q\in {\rm DRem}(\mu-P)$ be such
 that $P$ and $Q$ do not commute. 
We proceed as above (rewriting the tiles in $P_{sf}$ so as to have a $0$-orientation)  and then we let each tile with an $s$-orientation fall down one place (from $[r,c]$ to $[r-1,c-1]$, say) and we leave all other tiles unchanged.  
An example is given in \cref{zeroorientation2}. 
When multiplying two elements in this way, we will represent the product by splitting each tile in half with the label of the top half corresponding to the first element and the label of the bottom half correspond to the second element.

When considering the dual Soergel graphs $D_\la^\mu = (D_\mu^\la)^*$, we will represent them with the same oriented tableau as $D_\mu^\la$ except that we will replace all $s$-orientation (respectively $f$-orientation, or $sf$-orientation) by the symbol $s^*$ (respectively $f^*$, or $f^*s^*$). An example of a degree two basis element $D^\mu_\la D^\la_\nu$ is given in Figure \ref{zeroorientation2}.
We will now restate some of the (simplest) relations in the Hecke category on the oriented tableaux.

\begin{prop}
The relations depicted in \cref{spotpic,spotpic2} hold.

\begin{figure}[ht!]
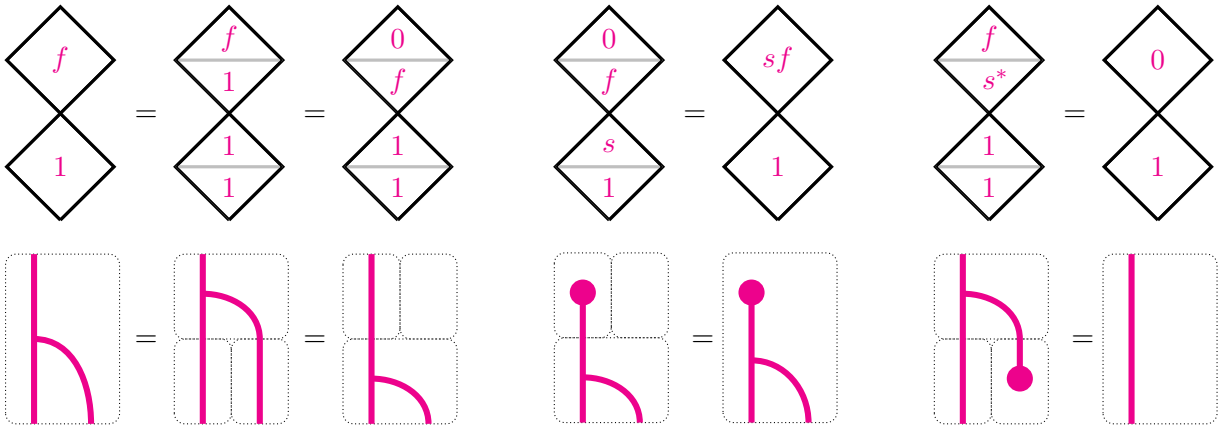


$$
\begin{minipage}{1.6cm}% [inline block 17: 27 envs, 18235 chars -> data_tex | \begin{tikzpicture}[scale=0.5]  \path(0,0) coordinate (origin2);...]
\end{minipage} 
  $$

\caption{A few fork (and fork-spot) relations on tableaux.  }
\label{spotpic2}
\end{figure}

\end{prop}

\begin{proof}
These are all restatements of the relations given in the monoidal presentation of the Hecke category. For example the last one is the dual fork-spot contraction.
\end{proof}

 \subsection{Generators for the Hecke category}

     We are now ready to prove that the algebra $\mathcal{H}_{m,n}$ is generated in degree 0 and 1.
\begin{prop}\label{generatorsarewhatweasay}
The algebra $\mathcal{H}_{m,n}$ is generated by the elements 
$$\{D^\la_\mu,
D_\la^\mu \mid 	
\la, \mu \in \mptn \text{ with $\la = \mu - P$ for some $P\in {\rm DRem}(\mu)$} 	\}\cup\{ D_\mu^\mu = {\sf 1}_\mu \mid \mu \in \mptn		\}.
$$
\end{prop}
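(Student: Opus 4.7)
The plan is to use the cellular basis of \cref{heere ris the basus} to reduce the problem to a single claim: every basis element $D^\mu_\la$ (or dually $D^\la_\mu$) for $(\la,\mu)$ a Dyck pair can be written as a product of the proposed generators. By the cellular structure, every element of $\mathcal{H}_{m,n}$ is a $\Bbbk$-linear combination of $D^\mu_\la D^\la_\nu$, so generating all the $D^\mu_\la$ (together with the idempotents ${\sf 1}_\mu$) suffices.

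I would proceed by induction on the degree $k=\deg(\la,\mu)$. For $k=0$ we have $\la=\mu$, and $D^\mu_\mu = {\sf 1}_\mu$ is a listed generator. For $k=1$ the element $D^\la_\mu$ is already one of the elementary degree-$1$ generators. For $k\geq 2$, write the (essentially unique, by \cref{corDyckpairs}) Dyck tiling $\mu\setminus \la =\bigsqcup_{i=1}^{k} P^i$. Following the argument of \cref{tilingrem}, choose $P=P^j$ to be a Dyck path in the tiling containing a removable tile of $\mu\setminus\la$ of minimal size; this guarantees $P\in {\rm DRem}(\mu)$ and that $(\la,\mu-P)$ is still a Dyck pair, of degree $k-1$. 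By the inductive hypothesis, $D^\la_{\mu-P}$ is a product of the proposed generators.

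The key identity to establish is
\[
D^{\mu-P}_{\mu}\circ D^\la_{\mu-P} \;=\; D^\la_\mu,
\]
which reduces the problem to the case already handled. To prove this I would unwind the light leaves construction. The basis element $D^\la_\mu$ is built by applying the up/down operators ${\sf U}^0_\csigma,{\sf U}^1_\csigma,{\sf D}^0_\csigma,{\sf D}^1_\csigma$ to the empty diagram according to the oriented tableau $\stt^\la_\mu$. The generator $D^{\mu-P}_\mu$ corresponds to the oriented tableau that differs from $\stt_\mu$ only on the tiles of $P_{sf}$ (which receive the $s/f/sf$ labels), while $D^\la_{\mu-P}$ corresponds to $\stt^\la_{\mu-P}$, which upon re-drawing as a tableau of shape $\mu$ has $0$-orientation on all tiles of $P_{sf}$ (as in \cref{blow-up}). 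Because $P$ was chosen as an innermost minimal Dyck path, the tiles of $P_{sf}$ sit below all the tiles of the other $P^i$ touched by the $\stt^\la_{\mu-P}$-labelling, so the vertical concatenation of the two oriented tableaux  matches $\stt^\la_\mu$ on the nose (as in \cref{zeroorientation}); passing through the up/down construction, the composition is exactly $D^\la_\mu$.

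The main obstacle is this final matching of oriented tableaux: when $P$ interacts with a covering Dyck path, one must check that the $s$-labelled tiles in the lower half-diagram slide into the correct positions in the composed tableau, and that the various intermediate ${\sf braid}$-generators appearing in the ${\sf D}^0_\csigma,{\sf D}^1_\csigma$ steps reassemble to give precisely the braid component ${\sf braid}^\sts_{\stt_\la}$ of $D^\la_\mu$. This is essentially bookkeeping with the pictorial  ``stacking'' convention and the relations recalled in \cref{spotpic,spotpic2}; it works cleanly because the ordering on the $P^i$ used in \cref{tilingrem} ensures that after removing $P$, the remaining Dyck paths are still disposed according to the same covering/distant pattern on $\mu-P$.
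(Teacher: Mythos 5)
Your overall strategy is the paper's own: use the cellular basis of \cref{heere ris the basus} to reduce to writing each $D^\la_\mu$ as a product of degree-$0$ and degree-$1$ generators, induct on $\deg(\la,\mu)$, peel one Dyck path $P$ off the tiling of $\mu\setminus\la$, and prove the one-step factorisation $D^\la_\mu = D^\la_{\mu-P}\,D^{\mu-P}_\mu$. (A small slip: your displayed identity $D^{\mu-P}_\mu\circ D^\la_{\mu-P}=D^\la_\mu$ has the factors in the wrong order for the paper's conventions, where the superscript is the northern boundary, so that product is killed by the idempotent relations; your prose about stacking shows you intend $D^\la_{\mu-P}\circ D^{\mu-P}_\mu$.) Your choice of $P$ --- the minimal-size path of the tiling containing a tile removable from $\mu$, as in \cref{tilingrem} --- is acceptable: the paper instead chooses $P$ not covered by any other path of the tiling, and your $P$ satisfies this, since a covering path would force a tile of $\mu$ directly above the removable tile.

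The genuine gap is in the justification of the factorisation itself. You assert that, because $P$ is ``innermost'', the tiles of $P_{sf}$ sit below all tiles of the other $P^i$, so the two oriented tableaux concatenate to $\stt^\la_\mu$ ``on the nose'' as in \cref{zeroorientation}. This picture is backwards: with either choice $P$ is an \emph{outermost} path, $P_{sf}$ runs along the corresponding cup of $e_\mu$ near the top of $\mu$, and whenever a covered path $P^i$ fails to commute with $P$ one has $P^i_{sf}\cap P_{sf}\neq\emptyset$. The stacking is then precisely \emph{not} the commuting picture of \cref{zeroorientation} but the non-commuting one of \cref{zeroorientation2}: in the shape-$\mu$ redrawing of $\stt^\la_{\mu-P}$ the $s$-labels of such $P^i$ sit one tile lower than they do in $\stt^\la_\mu$. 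The entire content of the step --- and the only place where the choice of $P$ is actually used --- is the observation that the tiles these displaced $s$-labels land on are labelled $1$ in $\stt^\la_\mu$ (because no path of the tiling covers $P$), after which the spot and fork idempotent relations of \cref{spotpic,spotpic2} convert the stacked tableau into $\stt^\la_\mu$; even in the commuting case the match is only up to these relations, not literal. Your proposal defers exactly this verification to ``bookkeeping'' while offering a geometric reason for it that is false, so as written the key identity $D^\la_\mu = D^\la_{\mu-P}D^{\mu-P}_\mu$ is not established.
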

 \begin{proof}
 lt suffices to show that every element $D_\mu^\la$  for $\la, \mu\in \mptn$ with $(\la,\mu)$ a Dyck pair can be written as a product of these elements. 
 We proceed by induction on $k = {\rm deg}(\underline{\mu}\la)$. For $k=0$ or $1$, there is nothing to prove. So assume that $k\geq 2$. We have $\mu\setminus \lambda = \sqcup_{i=1}^k P^i$ where each $P^i\in {\rm DRem}(\mu)$. Pick $P\in \{P^i \, : \, 1\leq i\leq k\}$ such that there is no $P^i$ covering $P$. Then we claim that 
 $$D_\mu^\la = D_{\mu - P}^\la D_{\mu}^{\mu - P}.$$
 The result would then follow by induction. To see this, note that the oriented tableau $\stt_{\mu -P}^\la$, viewed as a tableau of shape $\mu$ as explained in the last subsection is obtained from $\stt_\mu^\la$ by setting the orientation of all tiles of $P_{sf}$ to $0$. Moreover, if there is some $P^i\neq P$ which does not commute with $P$, then each of the $s$-orientations on the tiles of $P^i_{sf}$ (which also belong to $P_{sf}$) in $\stt_\mu^\la$ fall down one tile. Note that by assumption on $P$, these were labelled by $1$ in $\stt_\mu^\la$.  
 Now using the relations given in \cref{spotpic,spotpic2}, we see that $D_{\mu - P}^\la D_{\mu}^{\mu - P} = D_\mu^\la$ as required. The dual element $D_\la^\mu = (D^\la_\mu)^*$ can then by written as the reverse product of the dual degree 1 elements. An example is given in Figure \ref{zeroorientation2}.
 \end{proof}

\vspace{-0.1cm}
\begin{figure}[ht!]
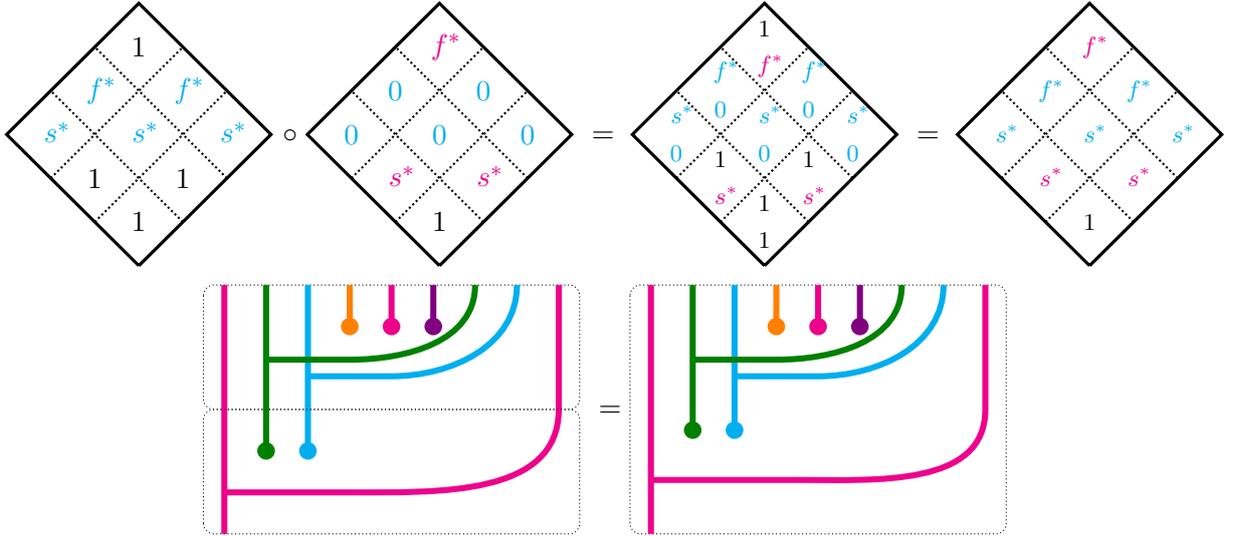


%
%$$
%% [inline block 18: 7 envs, 11487 chars -> data_tex | \begin{tikzpicture}[scale=0.7] % \path(0,0) coordinate (origin2);...]
\end{minipage}
$$

\caption{A product of non-commuting diagrams on tableaux.}
\label{zeroorientation2}
\end{figure}

\vspace{-0.1cm}
 
\section{Dilation and contraction  }

%Both of our diagrammatic algebras have 

We now make a slight detour in order to construct  dilation  maps which allow us to interrelate    partitions,  weights,   Dyck paths as well as Hecke categories and Khovanov arc algebras of different sizes.

\begin{defn}
For  $-m \leq k \leq n$ we define the dilation map $\varphi_k: \mathscr{P}_{m,n} \longrightarrow \mathscr{P}_{m+1,n+1}$ on weights by setting $\varphi_k (\la)$ for $\la \in \mathscr{P}_{m,n}$ to be the weight obtained from $\la$ by moving any label in position $x<k$ to $x-1$, any label in position $x>k$ to $x+1$ and labelling the vertices $k-\frac{1}{2}$ and $k+\frac{1}{2}$ by $\down$ and $\up$ respectively. 
\end{defn}

The following lemmas follow directly from the definition.

\begin{lem}
The map $\varphi_k$ is injective with image given by the set $\mathscr{P}^k_{m+1,n+1}$ consisting of all partitions with a removable node of content $k$. We call such partitions {\sf contractible at $k$}.
\end{lem}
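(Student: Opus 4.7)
The plan is to exhibit an explicit candidate inverse $\psi_k$ (a ``contraction'' map), which will simultaneously establish injectivity and one half of the image characterisation. Concretely, I would define $\psi_k$ on any weight in $\mathscr{P}_{m+1,n+1}$ whose positions $k\pm\tfrac12$ carry the labels $\down,\up$ respectively, by deleting those two labels and then shifting each remaining label at position $x<k$ by $+1$ and each label at position $x>k$ by $-1$. Directly from the definition of $\varphi_k$ one reads off that $\psi_k\circ\varphi_k=\mathrm{id}$, which gives injectivity of $\varphi_k$.

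For the image identification I would prove the two inclusions separately. The inclusion $\operatorname{im}\varphi_k\subseteq\mathscr{P}^k_{m+1,n+1}$ is immediate from the construction: $\varphi_k(\lambda)$ carries a $\down$ at position $k-\tfrac12$ immediately followed by an $\up$ at $k+\tfrac12$. Translating into the perimeter-walk interpretation of weights (NE step for $\down$, SE step for $\up$), this pair is a local peak at horizontal coordinate $k$, and such a peak is precisely the northern boundary of a removable tile of content $k$; hence $\varphi_k(\lambda)\in\mathscr{P}^k_{m+1,n+1}$. For the reverse inclusion, given $\mu\in\mathscr{P}^k_{m+1,n+1}$ the same perimeter argument, run backwards, forces $\mu$ to carry labels $\down,\up$ at positions $k-\tfrac12,k+\tfrac12$, so $\psi_k(\mu)$ is defined, and a direct check then gives $\varphi_k(\psi_k(\mu))=\mu$.

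The main point requiring care---and essentially the only non-bookkeeping step---is to verify that $\varphi_k(\lambda)$ is genuinely a valid element of $\mathscr{P}_{m+1,n+1}$, and dually that $\psi_k(\mu)\in\mathscr{P}_{m,n}$. The label counts balance automatically: $\varphi_k$ inserts one $\up$ and one $\down$, raising the totals from $(m,n)$ to $(m+1,n+1)$. It remains to check that the shifted, augmented label sequence really does trace the perimeter of a partition contained in an $(m+1)\times(n+1)$ rectangle; the hypothesis $-m\leq k\leq n$ is exactly what ensures the inserted peak lies within the enlarged bounding box. The only mild subtlety, and the place to be most careful, is the boundary cases $k=-m$ and $k=n$ where one half of the shift is vacuous and the peak sits flush against the left or right wall; a brief case-check confirms validity there too.
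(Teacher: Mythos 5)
Your argument is correct, and it is essentially the argument the paper has in mind: the paper offers no written proof beyond the remark that the lemma "follows directly from the definition," and your explicit contraction map $\psi_k$ together with the peak/removable-box reading of weights (a $\down$ at $k-\tfrac{1}{2}$ followed by an $\up$ at $k+\tfrac{1}{2}$ is exactly a removable tile of content $k$) is precisely the verification being left to the reader. The only simplification available is that your "main point requiring care" is automatic: every sequence of $m+1$ $\up$'s and $n+1$ $\down$'s is the weight of some partition in the $(m+1)\times(n+1)$ rectangle, so validity of $\varphi_k(\la)$ and $\psi_k(\mu)$ needs no separate boundary-case check.
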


\begin{lem} Let $\la, \mu \in \mptn$ and let $-m \leq k \leq n$.
We have that $(\la,\mu)$ is a Dyck pair of degree $j$ if and only of $(\varphi_k(\la),\varphi_k(\mu))$  is a Dyck pair of degree $j$.
In particular, if $\la = \mu - P$ for some $P\in {\rm DRem}(\mu)$ then we have $\varphi_k(\la) = \varphi_k(\mu) - Q$ where $Q\in {\rm DRem}(\varphi_k(\mu))$ satisfies 
% [inline block 19: 3 envs, 9087 chars -> data_tex | \begin{tabular}{ll}  $|Q| = |P|+2$ and $\underline{\sf cont}(Q)=\underline{\sf cont}(P) \cup \{ {\sf first}(P) - 1, {\sf...]
 
$$
%\caption{One last picture}
%\label{One last picture}
%\end{figure}

\caption{ The latter partition is obtained from the former by dilation at $\color{cyan}k=1$.   }

\end{figure}

 We now extend the dilation map $\varphi_k$ to dilation homomorphisms for the Hecke categories and the arc algebras. We will use the same notation for all three dilation maps. 
 We start with the Hecke category.  

\begin{thm}
Let $\Bbbk$ be a commutative integral domain and let $i \in \Bbbk$ be a square root of $-1$.  
 For $-m\leq k\leq n$, we define the map $\varphi_k : \mathcal{H}_{m,n }\to \mathcal{H}_{ m+1,n+1 }$ on the generators as follows. For $\la, \mu \in \mptn$ with $\la = \mu - P$ for some $P\in {\rm DRem}(\mu)$ we have
 $
\varphi_k({\sf 1}_\mu)= {\sf 1}_{{\varphi_k(\mu)}},
 $
and 
$$
\varphi_k(D^\la_\mu)= \left\{ \begin{array}{ll} D^{\varphi_k(\la)}_{\varphi_k(\mu)} & \mbox{if $k\notin \underline{\sf cont}(P)$} \\
(-i) \cdot 
D^{\varphi_k(\la)}_{\varphi_k(\mu)}	& \mbox{ if $k\in \underline{\sf cont}(P)$ and $k$ labels a spot tile in $P_{sf}$}\\	
i \cdot 
D^{\varphi_k(\la)}_{\varphi_k(\mu)}	& \mbox{ if $k\in \underline{\sf cont}(P)$ and $k$ labels a fork tile in $P_{sf}$} 
\end{array} \right.
$$ and $\varphi_k(D_\la^\mu) = \varphi_k((D_\mu^\la)^*) = (\varphi_k(D_\mu^\la))^*$.
Then $\varphi_k$ extends to an injective   homomorphism of graded $\Bbbk$-algebras. 
\end{thm}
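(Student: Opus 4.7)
My plan is to prove the theorem in three stages: well-definedness of $\varphi_k$ as an algebra homomorphism, injectivity, and preservation of the grading.

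For well-definedness, I will use Proposition~\ref{generatorsarewhatweasay}, which says that $\mathcal{H}_{m,n}$ is generated by the idempotents $\mathsf{1}_\mu$ together with the degree-1 elements $D^\la_\mu, D^\mu_\la$ for $\mu = \la + P$ with $P\in\mathrm{DRem}(\mu)$. It therefore suffices to check that $\varphi_k$ respects all quadratic relations among these generators; by the cellular basis theorem (Theorem~\ref{heere ris the basus}), every such quadratic product can be expanded in the cellular basis $\{D_\la^\mu D^\la_\nu\}$, and the coefficients are governed entirely by the Dyck combinatorics of Section~4. The concrete strategy is to show that $\varphi_k$ sends cellular basis elements to unit scalar multiples of cellular basis elements:
$$\varphi_k\bigl(D_\la^\mu D^\la_\nu\bigr)
=\epsilon_k(\mu,\la)\,\epsilon_k(\nu,\la)\,D_{\varphi_k(\la)}^{\varphi_k(\mu)}\,D^{\varphi_k(\la)}_{\varphi_k(\nu)},$$
where $\epsilon_k(\mu,\la)\in\{\pm 1,\pm i\}$ is determined multiplicatively by the Dyck paths of $\mu\setminus\la$ that contain $k$. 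Because each such Dyck path $P$ contributes exactly one spot-tile and one fork-tile of content $k$ in its oriented tableau $\stt_\mu^\la$ (by Definition~2.2 applied to $P_{sf}$), the sign factors $-i$ (for spot tiles) and $+i$ (for fork tiles) attached in the definition of $\varphi_k$ combine into a well-defined global twist independent of the chosen factorization of $D_\la^\mu$ into degree-1 generators via Proposition~\ref{generatorsarewhatweasay}.

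Injectivity then follows at once: for fixed $\mu,\nu\in\mptn$, $\varphi_k$ restricts to a $\Bbbk$-linear map from $\mathsf{1}_\nu \mathcal{H}_{m,n}\mathsf{1}_\mu$ into $\mathsf{1}_{\varphi_k(\nu)}\mathcal{H}_{m+1,n+1}\mathsf{1}_{\varphi_k(\mu)}$ which, by the previous formula, sends the cellular basis to linearly independent elements of the cellular basis of the larger algebra (up to unit scalars in $\Bbbk$, noting $i^2=-1$ is a unit). Preservation of the grading is immediate from Corollary~\ref{corDyckpairs}: $\deg(D_\la^\mu D^\la_\nu)$ equals the total number of Dyck paths in the Dyck tilings of $\mu\setminus\la$ and $\nu\setminus\la$, and $\varphi_k$ sends each Dyck path to a single Dyck path (of the same count, whether or not $k$ lies in its content interval).

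The main obstacle is the sign bookkeeping in the first step: when a cellular basis element is factorized into degree-1 generators, the order in which the Dyck paths are peeled off is not unique, and one must verify that the accumulated product of twist factors $\pm i$ depends only on the final pair $(\la,\mu)$ (and $k$) rather than on the chosen factorization. The resolution is that a single Dyck path $P$ through $k$ contributes a spot-tile at $\mathrm{first}(P)$ in one direction and a fork-tile at $\mathrm{last}(P)$ in the dual direction (and vice versa when one factors through $\mu-P$ first), so the joint contribution is always the fixed scalar $(-i)\cdot(+i)=1$ from that path—while distant or nested Dyck paths contribute independently. Once this combinatorial invariance is confirmed, the three steps combine to give the theorem.
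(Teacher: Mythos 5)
There is a genuine gap, on two levels. First, the logical structure: knowing a generating set (\cref{generatorsarewhatweasay}) does not tell you which relations must be checked for a map defined on generators to extend to a homomorphism. You would need a presentation of $\mathcal{H}_{m,n}$ in the generators $\{{\sf 1}_\mu, D^\la_\mu, D_\la^\mu\}$, i.e.\ the completeness of the quadratic relations, which is exactly \cref{presentation} --- and the paper's proof of \cref{presentation} (the self-dual, non-commuting and adjacent relations) is by induction \emph{via this dilation theorem}, so invoking it here is circular within the paper's development, and you have not supplied an independent proof of it. Nor does your fallback --- showing that cellular basis elements go to unit multiples of cellular basis elements --- establish that $\varphi_k$ is multiplicative: that would require comparing structure constants of the two algebras, which is precisely the hard content you are trying to avoid. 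The paper's own proof sidesteps all of this: the dilation map is defined on the \emph{monoidal} generators (idempotents, spots, forks, braids) of \cref{easydoesit}, where it was already shown in the companion paper \cite[Section 5.3]{compan} to be an injective graded homomorphism because the defining relations there are local and can be checked directly; the theorem is then obtained simply by rewriting that map in terms of the generators $D^\la_\mu$. A self-contained argument would have to follow that route (verify the local relations of \cref{easydoesit}), not the Dyck presentation.

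Second, the sign bookkeeping in your ``resolution'' is wrong. Since consecutive tiles of a Dyck path have contents differing by $1$, each content occurs in $P$ exactly once; so a path $P$ with $k\in\underline{\sf cont}(P)$ has a \emph{unique} tile of content $k$ in $P_{sf}$, which is either a spot tile or a fork tile, and the corresponding degree-one generator is scaled by $-i$ or $+i$ accordingly --- not by $(-i)\cdot(+i)=1$. Indeed, if each path contributed the scalar $1$ the statement you are proving (which assigns $\pm i$ to degree-one generators) would be contradicted. The real order-independence issue you correctly identify --- that when a higher-degree $D^\la_\mu$ is peeled into degree-one factors, the intermediate partitions change and hence so may the spot/fork label at content $k$ of the path being removed --- is therefore not resolved by your cancellation mechanism; it is asserted, with an incorrect justification. (Your grading and injectivity steps would be fine once the homomorphism and basis-image statements were actually established, but as written they rest on the unproven first step.)
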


\begin{proof}
The map $\varphi_k$ is defined on the monoidal  (spot, fork, braid and idempotent) generators of $\mathcal{H}_{m,n}$  in \cite[Section 5.3]{compan} (note that in that paper we use the notation $\varphi_\ctau$ where $\ctau = s_k$), where it is proven to be an injective   homomorphism of graded $\Bbbk$-algebras.  
Rewriting this in terms of the generators  $D^\la_\mu$  and 
$D^{\varphi_k(\la)}_{\varphi_k(\mu)}$ 
we deduce the result. 
 \end{proof}

 We now define the dilation homomorphisms for $\mathcal{K}_{m,n }$.

\begin{thm}\label{dilate-K}
Let $\Bbbk$ be a commutative integral domain. 
  For $-m\leq k\leq n$, the map $\varphi_k : \mathcal{K} _{m,n }\to \mathcal{K}_{m+1,n+1 } $ defined on arc diagrams by 
$$
\varphi_k( {\underline{\mu}\la \overline{\nu}} )= \underline{\varphi_k(\mu)} \varphi_k(\la) \overline{\varphi_k(\nu)}
$$ 
extends to an injective   homomorphism of graded $\Bbbk$-algebras.
\end{thm}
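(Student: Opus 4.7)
The plan is to verify the theorem on basis elements, checking in turn that $\varphi_k$ is well-defined and degree-preserving, is multiplicative, and is injective. The first and third of these are essentially formal: the new positions $k \pm \tfrac12$ carry consecutive labels $\down\up$ in $\varphi_k(\la)$, so by construction of the cup diagram these are paired first into an innermost small cup (and analogously for caps). This small cup is anticlockwise (left endpoint $\down$, right endpoint $\up$), and hence of degree $0$, so if $\underline{\mu}\la$ is oriented of degree $d$ then so is $\underline{\varphi_k(\mu)}\varphi_k(\la)$, and likewise for caps. Thus $\varphi_k$ sends basis elements to basis elements of the same degree, and distinct basis elements to distinct basis elements, giving both well-definedness, grading preservation, and injectivity.

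For multiplicativity, I would fix basis elements $x = \underline{\la}\mu\overline{\nu}$ and $y = \underline{\nu}\beta\overline{\gamma}$ (with matching middle weights; the product is zero on both sides otherwise). The key observation is topological: when we stack $\varphi_k(x)$ below $\varphi_k(y)$, the innermost cap in $\overline{\varphi_k(\nu)}$ and the innermost cup in $\underline{\varphi_k(\nu)}$, both at positions $k \pm \tfrac12$, share both endpoints and together form an isolated anticlockwise closed curve in the middle of the stacked diagram. This curve is disjoint from every other strand and thus undergoes no surgery. Away from the two new positions, the stacked diagram coincides with the dilation of the stacked diagram for $xy$, and the surgery and re-orientation rules lift step by step, producing the same terms with the same coefficients as in the non-dilated case. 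The isolated small anticlockwise circle contributes a unit factor under the Frobenius structure governing the re-orientation rules, matching the unit contribution of the new cup and cap appearing in $\varphi_k$ applied to each output basis element, and hence $\varphi_k(x)\varphi_k(y) = \varphi_k(xy)$.

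The main technical obstacle is the final claim that the isolated small anticlockwise circle contributes trivially to the surgery. Making this rigorous requires unwinding the splitting and merging rules for the extended arc algebra and tracking orientations through each surgery step to confirm two things: that the small circle is never involved in any saddle surgery (which is the topological assertion that it is isolated from every other strand in the stacked diagram), and that its contribution at the end matches precisely the new cup and cap added by $\varphi_k$ to the output basis element. The check is elementary but notationally involved, and reduces to a local computation near the two new positions $k \pm \tfrac12$, together with the fact that the surgery elsewhere proceeds identically with or without the extra isolated feature.
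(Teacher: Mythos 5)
Your overall strategy (injectivity and degree-preservation are formal because the new positions $k\pm\tfrac12$ carry $\down\up$ and hence an innermost anticlockwise cup/cap; multiplicativity is a local check at the new positions, everything else being untouched) is exactly the route the paper takes. But the claim you single out as the crux is wrong as stated, and it is precisely the point where the argument needs the actual content. In the arc-algebra multiplication a surgery is performed at \emph{every} mirror cup--cap pair of the middle section $\overline{\varphi_k(\nu)}\,\underline{\varphi_k(\nu)}$, and the new innermost pair at $k\pm\tfrac12$ is one of these pairs; so the new feature is emphatically \emph{not} ``disjoint from every other strand and thus undergoes no surgery''. Moreover your topological picture of it is off: before any surgery the new cap of $\overline{\varphi_k(\nu)}$ does not close up with the new cup of $\underline{\varphi_k(\nu)}$ into one isolated curve (their endpoints lie on the two different glued weight lines); rather the new cap closes up with the new cup of $\underline{\varphi_k(\mu)}$ into a small anticlockwise circle inside the bottom factor, and symmetrically the new cup of $\underline{\varphi_k(\nu)}$ closes up with the new cap of $\overline{\varphi_k(\gamma)}$ in the top factor. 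The extra surgery step is the merge of these two small anticlockwise circles.

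The repair is exactly the paper's one-line proof: schedule the surgery at the $k\pm\tfrac12$ pair to be the last step; since both components involved are anticlockwise circles, this step is the merge $1\otimes 1\mapsto 1$ (more generally, an anticlockwise circle meeting the weight in exactly two points acts as a local idempotent, cf.\ the later Lemma on local idempotent relations), so it introduces no coefficient and no reorientation, and the surviving small circle becomes precisely the new cup/cap with $\down\up$ in the output, i.e.\ the dilation of the undilated output. All other surgeries involve only components whose type (clockwise circle, anticlockwise circle, strand) is unchanged by dilation, so they reproduce the undilated computation term by term. So keep your framework, but replace ``the small circle is never involved in any saddle surgery'' by ``the one additional surgery is a merge of two anticlockwise circles and hence acts as the identity''; as written, the step you flagged would fail on inspection of the multiplication rule.
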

\begin{proof}
Consider a pair of arc diagrams for which 
the  $k\pm\tfrac{1}{2}$ vertices form anti-clockwise oriented circles.  
We can choose to do the surgery procedure so that this is the final step we consider.
The anti-clockwise circle acts as an idempotent and so the result follows.
  \end{proof}

 These dilation homomorphisms will allow us to prove results by induction. %This will be possible unless we are in the following situation.
The base cases for the induction will be the following:
 
 \begin{defn}
Let $\la, \mu \in \mptn$ with  $\la=\mu-P$ for $P\in {\rm DRem}(\mu)$.  
 We say that $(\la,\mu)$ are {\sf incontractible} if there does not exist $k\in \ZZ$ such that $\la, \mu\in \mathscr{P}^k_{m,n}$.  
 \end{defn}
 
 \begin{rmk}\label{incontr-prtns}
By definition, it is clear that $(\la,\mu)$ are incontractible if and only if $\mu=(c^r)$ is a rectangular partition and 
$\la=(c^{r-1},c-1)$ so that $b(P)=1$.
\end{rmk}

    \section{The  quiver and relations for    $\mathcal{H}_{m,n }$}
 
We now provide a  presentation for $\mathcal{H}_{m,n }$ %which correspond to the quiver presentation of $\mathcal{H}_{m,n } $  
 over   $\Bbbk$  an arbitrary  integral domain. 
Before stating the presentation as a theorem, we first recall (and slightly rephrase) a proposition from \cite[Proposition 4.18]{compan} and a lemma which will be used in the proof.

\begin{prop}\cite[Proposition 4.18]{compan} \label{generationofspots}
Let $\la \in \mptn$,  
     ${\color{cyan}[r,c]} \in {\rm Add}(\la)$ such that $s_{r-c} = \ctau$.
     Then we have that 
\begin{align}\label{notChook2} 
{\sf 1}_{\la}  \otimes {\sf bar}(\ctau  )  
  = 
\sum_{[x,y]}
(-1) ^{b \langle x,y  \rangle_{\la} } 
D^{\la}_{\la- \langle x,y \rangle_{\la} } D^{\la - \langle x,y \rangle_{\la}  }_{\la}.
  \end{align}
where the sum is taken over all $[x,y]\in \la$ where either $[x,y]=[x,c]$ with $x<r$ or $[x,y]=[r,y]$ with $y<c$, and $\langle x,y \rangle_{\la}\in {\rm DRem}(\la)$.
\end{prop}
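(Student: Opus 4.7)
Since this is stated as a reformulation of a result from \cite{compan}, my plan is to recover the identity by combining the local Demazure relations with an induction on the size of $\la$. The key observation is that a barbell ${\sf bar}(\ctau)$ adjacent to an idempotent ${\sf 1}_\la$ with $[r,c] \in {\rm Add}(\la)$ of content $\ctau$ is controlled entirely by the column $c$ and row $r$ of $\la$ meeting at the addable tile.

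First I would establish the base case in which $\la$ is a single tile of content $\ctau'$ with $|\ctau-\ctau'|=1$. Here the two-colour Demazure relation
$$ {\sf bar}(\ctau) \otimes {\sf 1}_{\ctau'} - {\sf 1}_{\ctau'} \otimes {\sf bar}(\ctau) = {\sf 1}_{\ctau'} \otimes {\sf bar}(\ctau') - {\sf gap}(\ctau')$$
together with the cyclotomic relations and the one-colour Demazure relation reduces ${\sf 1}_\la \otimes {\sf bar}(\ctau)$ to precisely one term $-D^\la_\varnothing D^\varnothing_\la$, matching the single removable Dyck path $\langle 1,1\rangle_\la$ of breadth $1$, hence the sign $(-1)^1$. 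More generally, whenever $(\la,\la-\langle x,y\rangle_\la)$ is incontractible (so $\la$ is rectangular and $b\langle x,y\rangle_\la=1$), one can check the formula directly in the frame of the last addable box using the fork-spot contraction and the null-braid relation.

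For the inductive step, I would argue by induction on $|\la|$ using the dilation homomorphisms $\varphi_k \colon \mathcal{H}_{m,n}\to\mathcal{H}_{m+1,n+1}$ of Section~6. The dilation map sends ${\sf 1}_\la \otimes {\sf bar}(\ctau)$ to ${\sf 1}_{\varphi_k(\la)} \otimes {\sf bar}(\varphi_k(\ctau))$ (for an appropriate shift of $\ctau$), and sends each degree-one generator $D^\la_{\la-\langle x,y\rangle_\la}$ to $\pm i\cdot D^{\varphi_k(\la)}_{\varphi_k(\la)-\varphi_k\langle x,y\rangle_\la}$ when $k \in \underline{\sf cont}\langle x,y\rangle_\la$, with the sign chosen so that the product $D^\la_{\la-\langle x,y\rangle_\la} D^{\la-\langle x,y\rangle_\la}_\la$ has the factors $(-i)(i)=1$ combining cleanly. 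Because breadth increases by $1$ exactly when $k$ lies in the content interval of the Dyck path, the parity $(-1)^{b\langle x,y\rangle_\la}$ is preserved in the recursion, and all terms not of the required form ($[x,c]$ with $x<r$, or $[r,y]$ with $y<c$ and $\langle x,y\rangle_\la$ removable) vanish either by cyclotomic annihilation or because the corresponding composition $D^\la_\mu D^\mu_\la$ is zero when $\mu\notin\mptn$.

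The main obstacle I expect is sign bookkeeping: one must verify that applying the Demazure and fork-spot relations to split ${\sf bar}(\ctau)$ across the row-and-column shape of $\la$ produces the precise sign $(-1)^{b\langle x,y\rangle_\la}$ rather than any other parity. This is essentially a careful induction tracking how each nested pair of fork-spot contractions inside a Dyck path contributes a factor of $-1$; the relevant bookkeeping is cleanest when performed directly on the oriented tableau picture of Section~5, where the number of fork-spot cancellations equals $b\langle x,y\rangle_\la$. Once this sign computation is in place, the rest of the argument follows from reducing to the incontractible base case via dilations.
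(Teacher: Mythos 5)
There is a genuine gap in your inductive step. The dilation homomorphism $\varphi_k$ is defined in this paper only on the idempotents ${\sf 1}_\mu$ and the degree-one generators $D^\la_\mu$, $D_\la^\mu$; its value on the element ${\sf 1}_{\la'}\otimes{\sf bar}(\ctau')$ is not available, and your step ``$\varphi_k({\sf 1}_{\la'}\otimes{\sf bar}(\ctau'))={\sf 1}_{\la}\otimes{\sf bar}(\ctau)$'' is exactly what cannot be asserted without further work. To compute $\varphi_k$ of a barbell one would either have to return to the monoidal-level definition of the dilation map (where a barbell whose colour is adjacent to $s_k$ does \emph{not} transform naively --- Demazure-type correction terms appear), or one would have to first expand ${\sf 1}_{\la'}\otimes{\sf bar}(\ctau')$ in terms of the $D$-generators --- which is precisely the identity \eqref{notChook2} you are trying to prove, so the induction is circular at its crux. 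This is why the paper itself never dilates the barbell identity: it proves \cref{generationofspots} directly, by quoting the companion-paper identity ${\sf 1}_\la\otimes{\sf bar}(\ctau)=-\sum_{[x,y]}{\sf gap}(\stt_\la-[x,y])$ and then converting each gap term into $(-1)^{b\langle x,y\rangle_\la-1}D^\la_{\la-\langle x,y\rangle_\la}D^{\la-\langle x,y\rangle_\la}_\la$ by applying the null-braid relation $b\langle x,y\rangle_\la-1$ times, with the terms for non-removable $\langle x,y\rangle_\la$ killed by the cyclotomic relations; dilation is used only afterwards, in the proof of the self-dual relation \eqref{selfdualrel}, where both sides are already words in the $D$-generators.

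Two further points. First, even granting a formula for $\varphi_k$ on barbells, the matching of terms under dilation is not automatic: the inserted diagonal of content $k$ can itself be a removable tile lying in row $r$ or column $c$, producing a summand on the dilated side with no counterpart for $\la'$, so you would need a case analysis of the kind the paper carries out when dilating the self-dual relation. Second, your sign heuristic attributes a factor $-1$ to each fork-spot contraction; the fork-spot relation is sign-free, and in the correct computation every $-1$ comes from an application of the null-braid relation, giving $(-1)^{b\langle x,y\rangle_\la-1}$ per term and the overall sign $(-1)^{b\langle x,y\rangle_\la}$ after combining with the minus sign in the quoted gap-sum identity.
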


\begin{proof}
Given $\la \in \mptn$,  
     $[x,y]\in \la$ such that $\stt_\la([x,y])=k$ and $\csigma = s_{x-y}$,   we set 
   $$
  {\sf gap}(\stt_\la-[x,y]) = {\sf 1}_{\stt_\la{\downarrow}_{\{1,\dots,k-1\}}} \otimes {\sf spot}^{\csigma}_\emptyset
   {\sf spot}_{\csigma}^\emptyset
   \otimes {\sf 1}_{\stt_\la{\downarrow}_{\{k+1,\dots,\ell(\la)\}}}.  
  $$
It was shown in \cite[Proposition 4.18]{compan} that 
\begin{align}\label{notChook} 
{\sf 1}_{  \la}  \otimes {\sf bar}(\ctau  )  
  = 
-\!\!
\sum_{[x,y]}
  {\sf gap}(\stt_\la- [x,y])  
  \end{align}
  where the sum is taken over all $[x,y]\in \la$ where either $[x,y]=[x,c]$ with $x<r$ or $[x,y]=[r,y]$ with $y<c$. 
For each term in the sum, we now apply $b\langle x,y \rangle_\la -1$ times the null-braid relation to get 
$${\sf gap}(\stt_\la- [x,y])  = (-1)^{b\langle x,y \rangle_\la -1} D^{\la}_{\la- \langle x,y \rangle_{\la} } D^{\la - \langle x,y \rangle_{\la}  }_{\la}.$$
 If $\langle  x, y \rangle_\la \in {\rm DRem}(\la)$, then this is a basis element and we are done. If $\langle  x, y \rangle_\la \notin {\rm DRem}(\la)$, write $\langle  x, y \rangle_\la  = [x_1, y_1], \ldots , [x_s, y_s]$. Then we have either $[x_1, y_1+1]\in \la$ and $[x_1 -1, y_1+1]\notin \la$, or $[x_s +1, y_s]\in \la$ and $[x_s +1, y_s -1]\notin \la$. In both cases, we can apply the cyclotomic relations to deduce that the correspondent element in the sum is zero.
\end{proof}

\begin{figure}[ht!]
\vspace{-0.5cm}$$
% [inline block 20: 2 envs, 11299 chars -> data_tex | \begin{tikzpicture}[scale=0.7]  \clip(-6,0) rectangle (3,11); \path(0,0) coordinate (origin2);...]
 $$

\caption{On the left we picture the 
boxes involved in relation in \cref{notChook2}  for $\mu=(7^5)$ and $b(P)=1$.  
On the right we depict the Dyck paths generated by these boxes --- this is how we will rephrase
the dilated version of \cref{notChook2}  as the ``self-dual relation" in \cref{presentation} below. 
The Dyck paths $\color{gray}Q_{-1}$, 
$\color{brown}Q_{1}$% (of breadths $\color{gray}1$, $\color{brown}1$) 
are adjacent to $P$ (where $P$ is pictured as a dot) 
$\color{darkgreen}Q_{2}$, $\color{orange}Q_{3}$, $\color{violet}Q_{4}$ 
%(of breadths  $\color{darkgreen}3$, $\color{orange}4$,  $\color{violet}5$~\dots)
highlighted so their colours correspond to the boxes in the left picture.    
}
\label{sum-relation}
\end{figure}

\begin{lem}\label{nullbraidontiles}
The following relations hold.
$$ 
\begin{minipage}{2.55cm}
% [inline block 21: 10 envs, 12317 chars -> data_tex | \begin{tikzpicture}[scale=0.92]    ...]
\end{minipage}  
$$
as required. \end{proof}

\begin{thm}\label{presentation}
The algebra $\mathcal{H}_{m,n }$ is the  associative $\Bbbk$-algebra generated by the elements 
\begin{equation}\label{geners}
\{D^\la_\mu,
D_\la^\mu \mid 	
\text{$\la, \mu\in \mptn$ with $\la = \mu - P$ for some $P\in {\rm DRem}(\mu)$} 
	\}\cup\{ {\sf 1}_\mu \mid \mu \in \mptn \}		
	\end{equation}
	subject to the  following relations and their duals. 

	\smallskip\noindent
{\bf The idempotent   
relations:} 
For all $\la,\mu \in \mptn$, we have that 
\begin{equation}\label{rel1}
{\sf 1}_\mu{\sf 1}_\la =\delta_{\la,\mu}{\sf 1}_\la \qquad 
\qquad {\sf 1}_\la D^\la_\mu {\sf 1}_\mu = D^\la_\mu.
\end{equation} 

\smallskip\noindent
	{\bf The 
	self-dual relation: } 
	Let  $P\in {\rm DRem}(\mu)$ and $\la = \mu - P$. Then we   have 
\begin{equation}\label{selfdualrel}
D_\mu^{\la} D_{\la}^\mu
= (-1)^{b(P)-1}\Bigg(
2
\!\! \sum_{   \begin{subarray}{c} Q\in {\rm DRem}(\la) \\ P\prec Q \end{subarray}}
\!\!
(-1) ^{b(Q) } D^{\la}_{\la- Q } D^{\la - Q  }_{\la} + 
\!\!\sum_{  \begin{subarray}{c} Q\in {\rm DRem}(\la) \\ Q \, \text{adj.}\, P \end{subarray}}
\!\!
 (-1)^{b(Q)} D^{\la}_{\la- Q } D^{\la - Q  }_{\la}\Bigg) 
\end{equation}  
where we abbreviate ``adjacent to" simply as ``adj."  

\smallskip\noindent
{\bf The commuting relations:} 
Let $P,Q\in {\rm DRem}(\mu)$ which commute. Then we have 
\begin{equation}\label{commuting}
D^{\mu -P-Q}_{\mu-P}D^{\mu-P}_\mu = D^{\mu-P-Q}_{\mu -Q}D^{\mu -Q}_\mu \qquad
D^{\mu -P}_\mu D^\mu_{\mu - Q} = D^{\mu - P}_{\mu - P - Q}D^{\mu - P - Q}_{\mu - Q}.
\end{equation}

\smallskip\noindent
{\bf The non-commuting relation:}   
Let $P,Q\in {\rm DRem}(\mu)$ with $P\prec Q$ which do not commute. Then  $Q\setminus P = Q^1\sqcup Q^2$ where $Q^1, Q^2 \in {\rm DRem}(\mu - P)$ and we have 
\begin{equation}\label{noncommuting}
D^{\mu -Q}_\mu D^\mu_{\mu-P} = D^{\mu - Q}_{\mu - P - Q^1}D^{\mu - P - Q^1}_{\mu - P} = D^{\mu - Q}_{\mu - P - Q^2}D^{\mu - P - Q^2}_{\mu - P}
\end{equation}

\smallskip\noindent
{\bf The adjacent relation:}  
Let $P\in {\rm DRem}(\mu)$ and $Q\in {\rm DRem}(\mu - P)$ be adjacent. Denote by $\langle P\cup Q \rangle_\mu$, if it exists,  the smallest removable Dyck path of $\mu$ containing $P\cup Q$. Then we have 
\begin{equation}\label{adjacent}
D^{\mu - P - Q}_{\mu - P}D^{\mu - P}_\mu = \left\{ \begin{array}{ll} (-1)^{b(\langle P\cup Q\rangle_\mu) - b(Q)} D^{\mu - P - Q}_{\mu - \langle P\cup Q\rangle_\mu}D^{\mu - \langle P\cup Q\rangle_\mu}_\mu & \mbox{if $\langle P\cup Q\rangle_\mu$ exists} \\ 0 & \mbox{otherwise} \end{array} \right.
\end{equation}
  \end{thm}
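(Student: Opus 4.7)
The plan is to first verify that each of the stated relations \eqref{rel1}--\eqref{adjacent} holds in $\mathcal{H}_{m,n}$, and then show that they suffice to reduce an arbitrary product of the generators \eqref{geners} to a $\Bbbk$-linear combination of the cellular basis elements of \cref{heere ris the basus}, which matches dimensions and yields the presentation. Since \cref{generatorsarewhatweasay} already tells us these elements generate, the surjection from the abstractly presented algebra onto $\mathcal{H}_{m,n}$ is clear once the relations are verified; the work is in showing this surjection is injective.

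For the verification step, the idempotent relations \eqref{rel1} are immediate from the light-leaves construction. The commuting and non-commuting relations \eqref{commuting}, \eqref{noncommuting} are best proven using the oriented-tableaux calculus of \cref{gens}: when $P,Q \in {\rm DRem}(\mu)$ commute, the supports $P_{sf}$ and $Q_{sf}$ are disjoint so the two products obtained by stacking the corresponding tableaux in either order are visibly equal; when $P \prec Q$ do not commute, one applies a single fork-spot contraction at the unique tile where $Q_{sf}$ crosses into $P_{sf}$ to obtain the rewriting. The adjacent relation \eqref{adjacent} follows from the cyclotomic relation when $\langle P \cup Q\rangle_\mu$ does not exist (the dangling arc propagates to the left wall), and from a sequence of fork-spot contractions together with the sign analysis of \cref{nullbraidontiles} when it does.

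The self-dual relation \eqref{selfdualrel} is the deepest. The plan is to reduce to the incontractible case (see \cref{incontr-prtns}) where $\mu = (c^r)$, $\la = (c^{r-1},c-1)$ and $b(P)=1$: in that case, one recognises \eqref{selfdualrel} as a straightforward rewriting of \cref{generationofspots} applied at the added node $[r,c]$, where the tiles $[x,c]$ and $[r,y]$ appearing on the right-hand side of \eqref{notChook2} generate exactly the removable Dyck paths of $\la$ which are adjacent to $P$, as illustrated in \cref{sum-relation}. Then for general $(\la,\mu)$ we iterate the dilation homomorphism $\varphi_k$ of \cref{gens} to obtain the relation from the incontractible base case. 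Here we must track the scalar factors $\pm i$ from the definition of $\varphi_k$: the contribution on each side of \eqref{selfdualrel} contains one spot-type and one fork-type dilation, and the $i$ and $-i$ scalars cancel, while the factor of $2$ in front of the covering sum appears only when dilating generates a new Dyck path that strictly covers $P$, producing two symmetric contributions from either side of the hook.

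Finally, for the completeness: given a product of two degree-$1$ generators $D^\alpha_\beta D^\beta_\gamma$, the pair $(\beta\setminus\alpha, \beta\setminus\gamma)$ is a pair of Dyck paths removable from $\beta$, and case analysis (commuting, one covering the other, adjacent, or equal) together with \eqref{commuting}--\eqref{selfdualrel} rewrites this product either as zero, or as a signed $D^\alpha_\delta D^\delta_\gamma$ with $\alpha,\gamma \subseteq \delta$, or as a linear combination of elements $D^\alpha_\delta D^\delta_\alpha$ where $\alpha\subseteq \delta$; iterating reduces any word in the generators to an ``up-down'' normal form $D^\la_\mu D^\mu_\nu$ indexed exactly as in \eqref{basis}. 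The main obstacle will be tracking the signs and $2$-coefficients in \eqref{selfdualrel} through the dilation induction; a bookkeeping lemma classifying how each $\varphi_k$ transforms the four families of pairs $(P,Q)$ appearing in the right-hand side (distant from $k$, with $k\in \underline{\sf cont}(P)$, with $k\in \underline{\sf cont}(Q)$, etc.) will be needed to ensure the exact form of the relation is preserved.
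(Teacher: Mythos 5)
Your overall architecture matches the paper's: verify the relations (with the self-dual relation handled by the $b(P)=1$ case via \cref{generationofspots} followed by an induction through the dilation maps $\varphi_k$), then prove completeness by rewriting arbitrary words in the degree-one generators into the light leaves basis of \cref{heere ris the basus}. However, your treatment of the self-dual relation contains a genuine error. In the base case $b(P)=1$ the hook tiles $[x,c]$ and $[r,y]$ of \eqref{notChook2} do \emph{not} generate only the Dyck paths adjacent to $P$: the tiles $[r-1,c]$ and $[r,c-1]$ give the two adjacent paths, but every tile $[r-j,c]$ or $[r,c-j]$ with $j\geq 2$ generates a path $Q$ with $P\prec Q$, and each such covering path is produced by exactly \emph{two} tiles (one from the column arm, one from the row arm). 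This double counting is precisely the source of the coefficient $2$ in \eqref{selfdualrel}, and it is already present when $b(P)=1$ (see \cref{sum-relation}). Your proposal instead asserts that the covering terms and the factor $2$ ``appear only when dilating''; this cannot work, because $\varphi_k$ induces a term-by-term bijection between the covering (resp.\ adjacent) paths $Q'$ for $(\la',\mu')$ and those for $(\la,\mu)$, so the dilation step can neither create new summands nor change their coefficients. As written, your identification of \eqref{selfdualrel} with \eqref{notChook2} in the base case is incorrect, and the induction cannot repair it.

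Two smaller points. First, the claim that ``the $i$ and $-i$ scalars cancel'' in the dilation step is not accurate: on the left-hand side the same scalar multiplies both $D^\la_\mu$ and its dual, so when $k\in\underline{\sf cont}(P')$ the product picks up $(\pm i)^2=-1$, and on the right-hand side the covering terms (where $k\in\underline{\sf cont}(Q')$) pick up $-1$ while the adjacent terms do not; consistency then rests on the breadth shifts $b(P)=b(P')+1$, $b(Q)=b(Q')+1$ versus $b(Q)=b(Q')$, which is exactly the bookkeeping you postpone, so this must be carried out rather than dismissed as cancellation. Second, your completeness argument is only sketched for products of two degree-one generators; to conclude you need an inductive scheme (the paper uses induction on the degree of the word together with a secondary induction on $\ell(\la)+\ell(\nu)$) showing that multiplying a light leaves basis element by one more generator stays in the span of the basis, since the quadratic relations alone only rewrite length-two products and termination of the rewriting is not automatic.
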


\begin{eg}
 We have already seen   examples of the 
``commuting relations" in \cref{zeroorientation}; 
of the ``non-commuting relations" in \cref{zeroorientation2}; the ``adjacent relation" in \cref{nullbraidontiles};
the 
 ``idempotent relations" are what one would expect;  the combinatorics of 
 the ``self-dual relation" is pictured in \cref{generationofspots,sum-relation}.

\end{eg}

\begin{proof}
By \cref{generatorsarewhatweasay} it is enough to check that   
(\ref{rel1}) to (\ref{adjacent})     are a complete list of relations. 
 We first prove that all these relations do hold.  
The idempotent relations are immediate.  We now proceed to check the other relations.

\smallskip
\noindent\textbf{Proof of the self-dual relation.} First consider the case where $b(P) = 1$. This is (up to commutation) exactly the case covered in \cref{generationofspots}. We just need to note that $\langle r-1, c\rangle_\la$ and $\langle r, c-1\rangle_\la$ give Dyck paths adjacent to $P$ and that $Q = \langle r-j, c\rangle_\la = \langle r, c-j\rangle_\la$ (for $j\geq 2$) give two identical Dyck paths satisfying $P\prec Q$.

Now if $b(P)\geq 2$ then we can find $k$ such that $(\la, \mu) = (\varphi_k(\la'), \varphi_k(\mu'))$ where $\la' = \mu'-P'$ with $b(P')=b(P)-1$ and $k\in \underline{\sf cont}(P')$. Now using induction and applying the dilation homomorphism we get
\begin{eqnarray*}
-D_\mu^{\la} D_{\la}^\mu
&=& (-1)^{b(P')-1}\left(
2
\!\! \sum_{   \begin{subarray}{c} Q'\in {\rm DRem}(\la') \\ P'\prec Q' \end{subarray}}
\!\!
(-1) ^{b(Q') }(-1) D^{\varphi_k(\la')}_{\varphi_k(\la'- Q') } D^{\varphi_k(\la' - Q')  }_{\varphi_k(\la')} \right. \\
&& + 
\left. \!\!\sum_{  \begin{subarray}{c} Q'\in {\rm DRem}(\la') \\ Q' \, \text{adjacent to}\, P' \end{subarray}}
\!\!
 (-1)^{b(Q')} D^{\varphi_k(\la')}_{\varphi_k(\la'- Q') } D^{\varphi_k(\la' - Q')  }_{\varphi_k(\la)}\right). 
\end{eqnarray*}
(Note that the $(-1)$ in the first summand comes from the fact that $k\in \underline{\sf cont}(Q')$ whereas this is not the case for the terms in the second summand). 
Now we have $\varphi_k(\la'-Q') = \la - Q$ for some $Q\in {\rm DRem}(\la)$. Moreover, $\varphi_k$ gives bijections between all $Q'$s satisfying $P'\prec Q'$ and all $Q$s satisfying $P\prec Q$, and between all $Q'$s adjacent to $P'$ and all $Q$s adjacent to $P$. Finally, noting that $b(P) = b(P')+1$, $b(Q) = b(Q')+1$ if $P'\prec Q'$ and $b(Q) = b(Q')$ if $Q'$ adjacent to $P$ gives the required relation.  

\medskip
\noindent 
\textbf{Proof of the commuting relations.}
The fact that $P$ and $Q$ commute is equivalent to $P_{sf}\cap Q_{sf} = \emptyset$. Now these relations follow directly by applying the spot idempotent and fork idempotent relations illustrated in \cref{spotpic,spotpic2}.  
(An example is depicted in \cref{zeroorientation}.)  

\smallskip
\noindent 
\textbf{Proof of the non-commuting relations.}
First consider the case where $b(P) = 1$ and $b(Q)=2$ then we have $Q\setminus P = Q^1 \sqcup Q^2$ with $|Q^1|=|Q^2| = 1$.   Visualising the product of basis elements on the partition $\mu$ we have that the only non-trivial part is given in \cref{ADD DIAGRAM}, simply by applying   the spot-fork relation (as in \cref{spotpic2}). This proves the result in this case.

\begin{figure}[ht!]
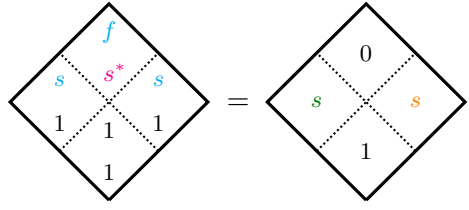

$$\begin{minipage}{2.55cm}
% [inline block 22: 2 envs, 4038 chars -> data_tex | \begin{tikzpicture}[scale=0.92]    ...]
  \end{minipage}
%%%% 
$$
\caption{The non-commuting relations with $b({\color{magenta}P})=1$ and 
$b({\color{cyan}Q})=2$ on the left.  
On the right we picture ${\color{darkgreen}Q^1}$ and  ${\color{orange}Q^2}$. 
The equality follows by the spot-fork relation (as in \cref{spotpic2}).}
\label{ADD DIAGRAM}
\end{figure}
 
 Now if $b(P)\geq 2$ or $b(Q)\geq 3$ then we can find $k$, such that $(\mu, \mu -P, \mu -Q, \mu - P - Q^1, \mu - P - Q^2) = (\varphi_k(\mu'), \varphi_k(\mu' - P'), \varphi_k(\mu' - Q'), \varphi_k(\mu' -P' - Q'^1), \varphi_k(\mu' -P' - Q'^2))$ such that $k\in \underline{\sf cont}(P')\sqcup \underline{\sf cont}(Q'^1) \sqcup \underline{\sf cont}(Q'^2)$. Now we can use induction and apply the dilation homomorphism. Noting that spot (respectively fork) tiles in $P_{sf}$ correspond to fork (respectively spot) tiles in $Q_{sf}$, gives the required relation.
 (The example in \cref{zeroorientation2} is obtained by dilating the example in \cref{ADD DIAGRAM} at $k=0$.)

\smallskip
\noindent 
\textbf{Proof of the adjacent relation.}  First assume that $b(P)=b(Q)=1$. Say $Q={[r,c]}$ then we have $\langle P\cup Q \rangle_\mu = \langle r,c\rangle_\mu$. Now applying \cref{nullbraidontiles} (i) once followed by $b(\langle r,c\rangle_\mu)-2$ times \cref{nullbraidontiles} (ii), we obtain
$$D^{\mu -P-Q}_{\mu-P}D^{\mu - P}_\mu = (-1)^{b(\langle r,c\rangle_\mu)-1}D^{\mu-P-Q}_{\mu - \langle r,c\rangle_\mu}D^{\mu - \langle r,c\rangle_\mu}_\mu$$
Now if $\langle r,c\rangle_\mu\notin {\rm DRem}(\mu)$ then this is equal to zero by the cyclotomic relations, giving the results. An example of this case is provided in \cref{a big one}. 
Now if $b(P)\geq 2$ or $b(Q)\geq 2$ then we can find $k$ such that 
$(\mu, \mu -P , \mu -P-Q, \mu-\langle P \cup Q\rangle_\mu) = (\varphi_k(\mu'), \varphi_k(\mu' -P') , \varphi_k(\mu' -P'-Q'), \varphi_k(\mu'-\langle P' \cup Q'\rangle_\mu'))$ with either $k\in \underline{\sf cont}(P')$ or $k\in \underline{\sf cont}(Q')$ but not both. In the first case, using induction and the dilation homomorphism we have
$$(\pm i) D^{\mu -P-Q}_{\mu-P}D^{\mu - P}_\mu = (-1)^{b(\langle P'\cup Q' \rangle_\mu')-b(Q')} (\mp i) D^{\mu-P-Q}_{\mu - \langle r,c\rangle_\mu}D^{\mu - \langle r,c\rangle_\mu}_\mu.$$ Noting that $b(\langle P\cup Q \rangle_\mu) = b(\langle P'\cup Q'\rangle_{\mu'})+1$ and $b(Q)=b(Q')$ gives the result.
In the second case, using induction and the dilation homomorphism we have 
$$(\pm i) D^{\mu -P-Q}_{\mu-P}D^{\mu - P}_\mu = (-1)^{b(\langle P'\cup Q' \rangle_\mu')-b(Q')} (\pm i) D^{\mu-P-Q}_{\mu - \langle r,c\rangle_\mu}D^{\mu - \langle r,c\rangle_\mu}_\mu.$$ Noting that $b(\langle P\cup Q \rangle_\mu) = b(\langle P'\cup Q'\rangle_{\mu'})+1$ and $b(Q)=b(Q')+1$ gives the result.

 \begin{figure}[ht!]
$$ \begin{minipage}{5.5cm}
% [inline block 23: 6 envs, 14203 chars -> data_tex | \begin{tikzpicture}[scale=0.92]    ...]
  \end{minipage}
$$
\caption{Rewriting the top left diagram, $D^{\mu-{\color{magenta}P}-{\color{cyan}Q}}_{\mu-{\color{magenta}P}}D^{\mu-{\color{magenta}P}}_\mu$, as the bottom diagram, 
$(-1)^{{\color{darkgreen}\langle{P}\cup {Q}\rangle_\mu}-1}
D^{\mu-{\color{magenta}P}-{\color{cyan}Q}}_
{\mu- {\color{darkgreen}\langle{P}\cup {Q}\rangle_\mu}}
D^{\mu- {\color{darkgreen}\langle{P}\cup {Q}\rangle_\mu}}
_\mu
$.
}
\label{a big one}
\end{figure}

\bigskip
It remains to show that these form a complete list of relations. 
\smallskip

\noindent
{\bf Completeness of relations. }
It is enough to show that, using (\ref{rel1})--(\ref{adjacent}) and their duals, we can rewrite any product of $k$ degree $1$ generators as a linear combination of light leaves basis elements. We proceed by induction on $k$. For $k=1$ there is nothing to prove. For $k=2$, note that (\ref{rel1})--(\ref{adjacent}) and their duals cover precisely all possible (non-zero) product of two degree $1$ generators and rewrite these as linear combinations of basis elements. Now, assume that the result holds for $k$ and consider a product of $k+1$ generators. By induction, it is enough to consider a product of the form
$$D^\mu_\la D^\la_\nu D^\nu_{\nu \pm P}$$
where $D^\mu_\la D^\la_\nu$ is a basis element of degree $k$ and $P$ is a Dyck path. To show that this product can be rewritten as a linear combination of basis elements, we will additionally use induction on $\ell(\la)+\ell(\nu)$. If $ \ell(\la)+\ell(\nu) = 0$ then $\la = \nu = \emptyset$, $\nu +P = (1)$ and we have, using (\ref{rel1}),  
$$D^\mu_\emptyset D^\emptyset_\emptyset D^\emptyset_{(1)} = D^\mu_\emptyset  D^\emptyset_{(1)}$$
which is a basis element. Now assume that $\ell(\la) + \ell(\nu) \geq 1$. If $\la \neq \nu$ then we can write 
$$D^\la_\nu = D^\la_{\nu -Q} D^{\nu - Q}_\nu$$
for some $Q\in {\rm DRem}(\nu)$. Now using (\ref{rel1})--(\ref{adjacent}) we can write
$$D^{\nu - Q}_\nu D^\nu_{\nu \pm P} = \sum_{\nu'}c_{\nu'}D^{\nu - Q}_{\nu'}D^{\nu'}_{\nu \pm P}$$
for some $c_{\nu '}\in \Bbbk$ where $\ell(\nu') \leq \ell(\nu - Q) < \ell(\nu)$. Now we have
\begin{eqnarray*}
D^\mu_\la D^\la_\nu D^\nu_{\nu \pm P} &=& D^\mu_\la D^\la_{\nu -Q}D^{\nu - Q}_\nu D^\nu_{\nu \pm P} \\
&=& \sum_{\nu '} c_{\nu'} (D^\mu_\la D^\la_{\nu - Q}D^{\nu - Q}_{\nu'})D^{\nu'}_{\nu \pm P}\\
&=& \sum_{\nu' , \la'} d_{\nu ' , \la'} D^\mu_{\la'}D^{\la'}_{\nu'}D^{\nu'}_{\nu\pm P}.
\end{eqnarray*}
by induction, and $\ell(\la')+\ell(\nu')<\ell(\la)+\ell(\nu)$ so we're done. It remains to consider the case where $\la = \nu$. Here we must have $\mu \neq \la$. First observe that 
$$D^\mu_\la D^\la_\la D^\la_{\la +P} = D^\mu_\la D^\la_{\la +P}$$
by (\ref{rel1}) and this is a basis element. The last case to consider is
$$D^\mu_\la D^\la_\la D^\la_{\la -P} = D^\mu_\la D^\la_{\la -P}.$$
As $\mu\neq \la$ we have 
$D^\mu_\la = D^\mu_{ \la +Q}D^{\la + Q}_\la$, for some $Q\in {\rm DAdd}(\la)$ and so
$$D^\mu_\la D^\la_\la D^\la_{\la +P} = D^\mu_{\la +Q}D^{\la +Q}_\la D^\la_{\la -P}.$$
Now, using (\ref{rel1})--(\ref{adjacent}) we have 
$$D^{\la +Q}_\la D^\la_{\la -P} = \sum_{\nu'}c_{\nu'} D^{\la + Q}_{\nu'} D^{\nu'}_{\la -P}$$
with $\ell(\nu') \leq \ell (\la - P)<\ell(\la)$ and so 
\begin{eqnarray*}
D^\mu_{\la +Q}D^{\la +Q}_\la D^\la_{\la -P} &=& \sum_{\nu'}c_{\nu'} (D^\mu_{\la + Q} D^{\la + Q}_{\nu'}) D^{\nu'}_{\la -P}\\
&=& \sum_{\la' , \nu'}d_{\la' , \nu'} D^\mu_{\la '} D^{\la'}_{\nu'}D^{\nu'}_{\la - P}
\end{eqnarray*}
using induction as ${\rm deg} D^\mu_{\la + Q} D^{\la + Q}_{\nu'} = k$, with $\ell(\nu') \leq \ell(\la -P)< \ell(\la)$. Now as $\ell(\la')+\ell(\nu')<\ell(\la)+\ell(\nu)$, we're done by induction.
 \end{proof}

\subsection{Recasting the Dyck presentation as a quiver and relations}
Gabriel proved  that every basic algebra is isomorphic to the path algebra of its ${\rm Ext} $-quiver modulo relations. 
We now go through the formal procedure of recasting the Dyck presentation in this language.  
%
%
%\begin{defn}
%We define the Dyck graph $\mathscr{D}$ with nodes given by the $ \mptn $ and a single edge  connecting each pair $\la,\mu \in \mptn$ such that $\la=\mu\pm P$ for some removable Dyck path $P$. 
%\end{defn}
%
%We define $Q$ to be the quiver obtained from $\mathscr{D}$ 
%by ``doubling", that is, $Q$ has    vertex set $\{e_\la \mid \la \in \mptn\}$ and for each edge has a  pair of arrows  $d^\la_\mu: \la   \to  \mu$ 
% and  $d^\mu_\la:\mu \to \la$. 
%

\begin{defn}\label{quiverdefn}
We define the Dyck quiver $\mathscr{D}_{m,n}$ with    vertex set 
$\{E_\la \mid \la \in \mptn\}$  and 
  arrows   $d^\la_\mu: \la   \to  \mu$ 
and 
   $d^\mu_\la:\mu \to \la$ for every $\la=\mu - P$ with $P\in {\rm DRem}(\mu)$. 
%a single edge  connecting each pair $\la,\mu \in \mptn$ such that $\la=\mu\pm P$ for some removable Dyck path $P$. 
\end{defn}

An example is depicted in \cref{quiver}.
%We define $Q$ to be the quiver obtained from $\mathscr{D}$ 
%by ``doubling", that is, $Q$ has    vertex set $\{e_\la \mid \la \in \mptn\}$ and for each edge has a  pair of arrows  $d^\la_\mu: \la   \to  \mu$ 
% and  $d^\mu_\la:\mu \to \la$. 

\begin{figure}[ht!]
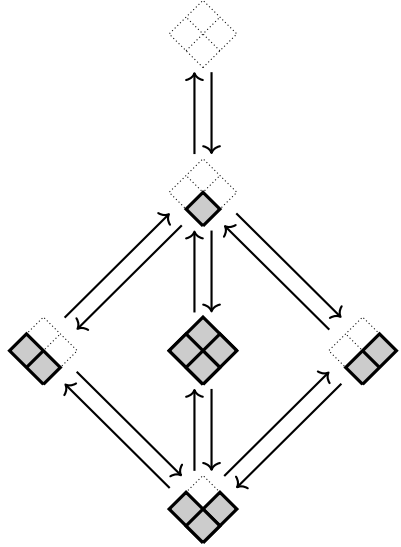


$$
 % [inline block 24: 1 envs, 5785 chars -> data_tex | \begin{tikzpicture}[scale=0.7 ] ...]

 $$

 \caption{The  quiver $\mathscr{D}_{2,2}$. }
 \label{quiver}
 
 \end{figure}

\begin{prop}
The map 
  $$
   E_\mu  \mapsto {\sf 1} _\mu
\qquad d^\la_\mu \mapsto  D^\la_\mu $$
defines an algebra homomorphism from the path algebra of the Dyck quiver $\mathscr{D}_{m,n}$ to $\mathcal{H}_{m,n} $. 
Thus, the algebra  $\mathcal{H}_{m,n} $ is isomorphic to the quotient of the  path algebra of the Dyck quiver 
$\mathscr{D}_{m,n}$ by the quadratic relations given in (\ref{rel1})--(\ref{adjacent}) (where we replace all $D_\mu^\la$'s with $d_\mu^\la$'s).
\end{prop}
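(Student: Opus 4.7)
The plan is to deduce this proposition essentially as a direct reformulation of Theorem~7.3, with only minor bookkeeping. First I would observe that the path algebra of the Dyck quiver $\mathscr{D}_{m,n}$ is, by definition, the associative $\Bbbk$-algebra freely generated by the vertex idempotents $E_\mu$ (for $\mu\in\mptn$) and the arrows $d^\la_\mu$, $d^\mu_\la$ (one pair for each $\la=\mu-P$ with $P\in{\rm DRem}(\mu)$), subject only to the obvious path-algebra relations: $E_\mu E_\la=\delta_{\la,\mu}E_\la$ together with the source/target compatibility $E_\la d^\la_\mu E_\mu=d^\la_\mu$ (and its dual). These are precisely the relations (\ref{rel1}) in Theorem~7.3 once we identify $E_\mu\leftrightarrow {\sf 1}_\mu$ and $d^\la_\mu\leftrightarrow D^\la_\mu$.

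Next I would verify that the assignment in the statement extends to a well-defined algebra homomorphism $\Phi$ from the path algebra of $\mathscr{D}_{m,n}$ to $\mathcal{H}_{m,n}$: this follows immediately from the idempotent relations (\ref{rel1}) which are proven to hold in $\mathcal{H}_{m,n}$ in Theorem~7.3. Since the remaining quadratic relations (\ref{selfdualrel}) through (\ref{adjacent}) have also been shown to hold among the $D^\la_\mu$ and $D^\mu_\la$ in $\mathcal{H}_{m,n}$ (again by Theorem~7.3), the homomorphism $\Phi$ descends to a homomorphism $\bar\Phi$ from the quotient of the path algebra by these relations to $\mathcal{H}_{m,n}$.

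Surjectivity of $\bar\Phi$ is immediate from Proposition~5.3, which states that $\mathcal{H}_{m,n}$ is generated as a $\Bbbk$-algebra by the idempotents ${\sf 1}_\mu$ together with the degree-$1$ elements $D^\la_\mu$ and $D^\mu_\la$ for $\la = \mu-P$. Injectivity is really the substance, but it is already packaged inside Theorem~7.3: that theorem asserts precisely that $(\ref{rel1})$--$(\ref{adjacent})$ is a \emph{complete} set of relations among the chosen generators, which is equivalent to saying that any algebra homomorphism defined on the free algebra on these generators descending through these relations must factor injectively.

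Thus the only real task is translation, and I anticipate no substantive obstacle: the bulk of the proof (establishing that (\ref{rel1})--(\ref{adjacent}) both hold and suffice) is Theorem~7.3. What remains to write is simply the identification of the free algebra on the generators (\ref{geners}) modulo relation (\ref{rel1}) with the path algebra of $\mathscr{D}_{m,n}$, and the observation that the additional relations (\ref{selfdualrel})--(\ref{adjacent}) are homogeneous quadratic expressions in the arrows $d^\la_\mu, d^\mu_\la$. The proposition then follows by combining these observations with Theorem~7.3 and Proposition~5.3.
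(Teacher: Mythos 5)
Your proposal is correct and is essentially the paper's own argument: the paper proves this proposition simply by citing \cref{presentation} (together with the generation statement already built into it via \cref{generatorsarewhatweasay}), and your write-up just spells out the routine identification of the path algebra of $\mathscr{D}_{m,n}$ with the free algebra on the generators modulo the idempotent relations \eqref{rel1}. No substantive difference, and no gap.
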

\begin{proof}
This follows directly from \cref{presentation}.
\end{proof}

\begin{eg}\label{building on}
Continuing with \cref{quiver} we have that  the algebra  $\mathcal{H}_{2,2}$ 
 is the path algebra of the quiver $\mathscr{D}_{2,2}$ 
modulo the following relations and their duals 
\begin{align}\label{1thing}
d^\varnothing _{(1)}d^{(1)}_{(2)}=0=d^\varnothing _{(1)}d^{(1)}_{(1^2)}
\qquad
d^{(1)}_{(2)}d^{(2)}_{(2,1)}
=
d^{(1)}_{(2^2)}d^{(2^2)}_{(2,1)}
=
d^{(1)}_{(1^2)}d^{(1^2)}_{(2,1)}
%$$
% $$
\qquad d^{(1)}_{\la}d_{(1)}^{\la}  
   =-d_\varnothing^{(1)}
d^\varnothing_{(1)} 
\end{align}
for $\la = (2), (1^2)$ or $(2^2)$,
\begin{align}
\label{2thing}
d^{(2,1)}_{(2^2)}d_{(2,1)}^{(2^2)}
   =-d^{(2,1)}_{(2 )}d_{(2,1)}^{(2 )}
   -d^{(2,1)}_{(1^2 )}d_{(2,1)}^{(1^2 )}
  \end{align}
   and 
  for any pair  $\nu<\mu$ not of the above form, we have that   
\begin{align}\label{3thing}
d^\nu _{\mu}
d^\mu_\nu
=0.
\end{align}
\end{eg}

\begin{eg}    \label{example-biserial}\label{coef-quiver}
 Apart from the categories of this paper, the only  Hecke categories   whose quiver and relations were understood were those corresponding to Weyl groups of ranks 2 and 3 \cite{MR2017061} 
 and those bi-serial algebras corresponding to $(W,P)=(S_n,S_{n-1})$, which we now describe.  
In this case, the quiver is depicted in \cref{quive2r}.

\begin{figure}[ht!]
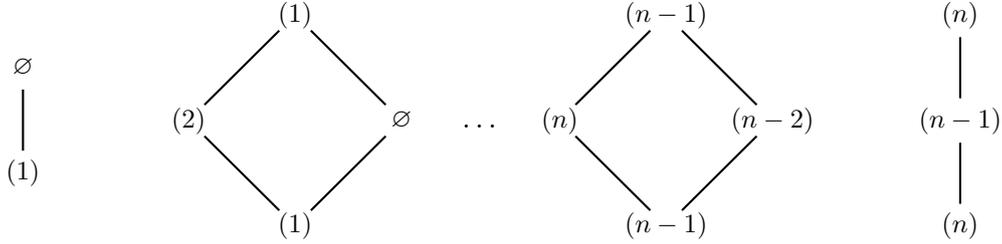


$$
 % [inline block 25: 5 envs, 3837 chars -> data_tex | \begin{tikzpicture}[scale=0.8 ] ...]
\end{minipage}
$$

 \caption{The  Alperin diagrams of projective modules for 
 $\mathcal{H}_{n,1}$. }
 \label{quiver3}
 
 \end{figure}  
  
 \end{eg}

 \section{Submodule structure of standard modules}   
    
 For this section, we assume that $\Bbbk$ is a field. As noted in \cref{heere ris the basus}, the algebra $\mathcal{H}_{m,n}$ is a basic (positively) graded quasi-hereditary algebra with graded cellular basis given by $$\{D^\mu_\la D^\la_\nu \, : \, \text{for all Dyck pairs} \, (\la, \mu), (\la, \nu)\, \text{with}\, \, \la, \mu, \nu\in \mptn \}.$$ 
For $\la \in \mptn$, write $\mathcal{H}_{m,n}^{\leq \la} = {\rm span}\{D^\mu_\alpha D^\alpha_\la \, : \, \alpha, \mu  \in \mptn , \, \alpha \leq \la\}$ and $\mathcal{H}_{m,n}^{< \la} = {\rm span}\{D^\mu_\alpha D^\alpha_\la \, : \, \alpha, \mu  \in \mptn , \, \alpha < \la\}$. 
Setting $${\rm DP}(\la):=\{ \mu\in \mptn\, : \, (\la, \mu)\, \text{is a Dyck pair}\},$$ the (left) {\sf standard module} $\Delta_{m,n}(\la) = \mathcal{H}_{m,n}^{\leq \la} / \mathcal{H}_{m,n}^{< \la} $ has a basis given by 
 $$\{ u_\mu := D^\mu_\la + \mathcal{H}_{m,n}^{<\la} \,  : \, \mu \in {\rm DP}(\la)\}.$$  Each $u_\mu$ generates a submodule of $\Delta_{m,n}(\la)$ with a $1$-dimensional simple head, which we denote by $L_{m,n}(\mu)$. 
 
 In this section, we describe the full submodule structure of the standard modules. As $\mathcal{H}_{m,n}$ is positively graded, the grading provides a submodule filtration of $\Delta_{m,n}(\la)$. Decompose ${\rm DP}(\la)$ as
 $${\rm DP}(\la) = \bigsqcup_{k \geq 0} {\rm DP}_k(\la) \quad \text{where} \quad  {\rm DP}_k(\la) = \{\mu\in {\rm DP}(\la) \, : \, \deg (\la, \mu)=k\}.$$
 Note further, that the algebra $\mathcal{H}_{m,n}$ is generated in degree $1$. This implies that, in order to describe the full submodule structure, it is enough to find, for each $\mu \in {\rm DP}_k(\la)$, the set of all $\nu \in {\rm DP}_{k+1}(\la)$ such that 
 $$u_\nu = c  D^\nu_\mu u_\mu$$
 for some $c\in \Bbbk$ and $\nu = \mu \pm P$ for some $P\in {\rm DAdd}(\mu)$ or $P\in {\rm DRem}(\mu)$ respectively. Thus, the condition that $\nu = \mu \pm P$ for some $P\in {\rm DRem}(\mu)$ or $P\in {\rm DAdd}(\mu)$ respectively is certainly a necessary condition for the existence of an extension between $L_{m,n}(\mu)$ and $L_{m,n}(\nu)$ in $\Delta_{m,n}(\la)$. We claim that it is also sufficient. Assume $\mu \setminus \la = \sqcup_i Q^i$. For $P\in {\rm DAdd}(\mu)$, note that $(\la, \mu +P)$ is a Dyck pair if and only if $P$ is not adjacent to any $Q^i$ and so in this case $(\mu +P)\setminus \la = \sqcup_i Q^i \sqcup P$ is the Dyck tiling and we have 
 $$D^{\mu +P}_\mu D^\mu_\la = D^{\mu +P}_\la$$
 by the definition of the light leaves basis.
For $P\in {\rm DRem}(\mu)$, the only way to have ${\rm deg}(\la, \mu -P) = {\rm deg}(\la , \mu) +1$ is if $P\notin \{Q^i\}$ and there exists some $Q\in \{Q^i\}$ such that $P\prec Q$ do not commute. In this case we have $Q\setminus P = R \sqcup S$ for some $R, S\in {\rm DRem}(\mu - P)$. We prove by induction on ${\rm deg}(\la, \mu)$ that $D^{\mu- P}_\mu D^\mu_\la = D^{\mu - P}_\la$.  If ${\rm deg}(\la, \mu) = 1$ then $\mu - Q=\la$ and the non-commuting relation gives  
 $$D^{\mu -P}_\mu D^\mu_\la = D^{\mu -P}_{\mu - P - R} D^{\mu - P - R}_{\mu - Q} = D^{\mu - P}_\la$$
 as required. 
 
 Now assume that ${\rm deg}(\la, \mu)\geq 2$. 
Suppose $Q\not\prec Q'$ for  $Q'\in \{Q^i\}$. Then we can write $D^\mu_\la = D^\mu_{\mu - Q} D^{\mu - Q}_\la$ and we have
 $$D^{\mu - P}_\mu D^\mu_\la = D^{\mu - P}_\mu D^{\mu}_{\mu - Q}D^{\mu - Q}_\la = D^{\mu - P}_{\mu - P - R} D^{\mu - P - R}_{\mu - Q} D^{\mu - Q}_\la = D^{\mu - P}_\la$$
 by the non-commuting relations and the definition of the light leaves basis. 
Otherwise,
we have that   $D^{\mu}_\la = D^\mu_{\mu - Q'}D^{\mu - Q'}_\la$ for some $Q'$ commuting with $P$. Then we have
 $$D^{\mu - P}_\mu D^\mu_\la = D^{\mu - P}_\mu D^{\mu}_{\mu - Q'}D^{\mu -Q'}_\la = D^{\mu - P}_{\mu - P - Q'} D^{\mu - P - Q'}_{\mu - Q'}D^{\mu - Q'}_\la = D^{\mu - P}_{\mu - P - Q'}D^{\mu - P - Q'}_\la = D^{\mu - P}_\la$$
where the second equality follows from the commuting relation, the third one follows by induction (as ${\rm deg}(\la, \mu - Q') = {\rm deg}(\la, \mu) - 1$), and the final equality follows by  the definition of the light leaves basis. 
 
  \begin{rmk}\label{socle} We set $k_\la = \max \{ k\geq 0 \, | \, {\rm DP}_k(\la)\neq \emptyset\}$. Then it is easy to check that ${\rm DP}_{k_\la}(\la)$ consists of a single element $\mu_\la$.  To construct the cup diagram of $\mu_\la$, start with the weight $\la$ and apply the following two steps:
 
\begin{enumerate}[leftmargin=*]
 \item  
repeatedly find a pair of vertices labeled $\up$ $\down$ in order from left to right that are neighbours in the sense that there are only vertices already joined by cups in between. Join these new vertices together with a cup. Then repeat the process until there are no more such $\up$ $\down$ pairs. 

We are left with a sequences of $\down$'s followed by a sequences of $\up$'s.
\item Join these using concentric anti-clockwise cups.  We are left with either a sequence of $\up$'s or a sequence of $\down$'s. Draw vertical rays on these. 
\end{enumerate}

 \end{rmk}

Suppose $\mu_\la \setminus \la = \sqcup_i Q^i$. 
Note that $\mu_\la$ is characterised by the following two properties:
 \begin{enumerate}
 \item There is no $P\in {\rm DAdd}(\mu_\la)$ such that $P \sqcup \left(\bigsqcup_i Q^i\right)$ is a Dyck tiling of $(\mu_\la+P) \setminus \la$.
 \item If $P\in \{Q^i\}$ and $Q\prec P$ then $Q\in \{Q^i\}$.
 \end{enumerate}
 
 This implies, in particular, that if $\mu \in {\rm DP}_k(\la)$ for $k<k_\la$, then either (1) or (2) above fails and we have seen that in each case we can find some $h\in \mathcal{H}_{m,n}$ and $\nu\in {\rm DP}_{k+1}(\la)$ such that $u_\nu = hu_\mu$.  Thus the radical and socle filtration of $\Delta_{m,n}(\la)$ coincide with its grading filtration and the socle of $\Delta_{m,n}(\la)$ is given by $L_{m,n}(\mu_\la)$.
 We have proved the following:

\begin{thm}\label{submodule}
Let $\la \in \mptn$. The Alperin diagram of the standard module $\Delta_{m,n}(\la)$ has vertex set labelled by the set 
 $\{ L_{m,n}(\mu)\, : \,  \mu\in {\rm DP}(\la)\}$  and edges
$$L_{m,n}(\mu) \longrightarrow L_{m,n}(\nu)$$
whenever $\mu \in {\rm DP}_k(\la)$, $\nu \in {\rm DP}_{k+1}(\la)$ for some $k\geq 0$ and $\nu = \mu \pm P$ for some $P\in {\rm DAdd}(\mu)$ or $P\in {\rm DRem}(\mu)$ respectively. 
Moreover, the radical and socle filtration both coincide with the grading filtration and $\Delta_{m,n}(\la)$ has simple socle isomorphic to $L_{m,n}(\mu_\la)$ (where $\mu_\la$ is described in \cref{socle}).
\end{thm}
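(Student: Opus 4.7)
The plan is to leverage the quadratic presentation of Theorem B together with the positivity of the grading to reduce the description of the full submodule lattice to a careful analysis of how the degree-$1$ generators $D^\nu_\mu$ act on the canonical basis $\{u_\mu : \mu \in \mathrm{DP}(\la)\}$ of $\Delta_{m,n}(\la)$. Since the basis elements $D^\mu_\la D^\la_\nu$ of $\mathcal{H}_{m,n}$ are products of degree $0$ and degree $1$ generators (Proposition~\ref{generatorsarewhatweasay}), and since $\Delta_{m,n}(\la)$ acquires a grading filtration that coincides a priori with a submodule filtration (by the positivity of the grading), to identify all submodules it suffices to determine for every $\mu \in \mathrm{DP}_k(\la)$ and every $\nu = \mu \pm P$ with $P$ a Dyck path whether $D^\nu_\mu \, u_\mu$ is a nonzero scalar multiple of $u_\nu$.

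First I would treat the addable case. Suppose $\mu\in\mathrm{DP}_k(\la)$ with Dyck tiling $\mu\setminus\la=\bigsqcup_i Q^i$, and let $P\in\mathrm{DAdd}(\mu)$. The pair $(\la,\mu+P)$ is a Dyck pair of degree $k+1$ precisely when $P$ is distant from, or covers/is covered by, every $Q^i$, which (by the structure of $\mathrm{DRem}$) is equivalent to $P$ not being adjacent to any $Q^i$. In that case $(\mu+P)\setminus\la=P\sqcup\bigsqcup_i Q^i$ is the Dyck tiling, and by the very definition of the light-leaves element one has $D^{\mu+P}_\mu D^\mu_\la = D^{\mu+P}_\la$, giving $D^{\mu+P}_\mu u_\mu = u_{\mu+P}$. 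If instead $P$ is adjacent to some $Q^i$, the adjacent relation \eqref{adjacent} forces the product to live in $\mathcal{H}_{m,n}^{<\la}$ (either by the cyclotomic vanishing or by rewriting onto a strictly larger Dyck removal), so $D^{\mu+P}_\mu u_\mu=0$.

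Next I would handle the removable case $P\in\mathrm{DRem}(\mu)$. The degree of $(\la,\mu-P)$ exceeds that of $(\la,\mu)$ by one precisely when $P\notin\{Q^i\}$ and there exists $Q\in\{Q^i\}$ with $P\prec Q$ not commuting with $P$; write $Q\setminus P=R\sqcup S$ with $R,S\in\mathrm{DRem}(\mu-P)$. The key identity is
\[
D^{\mu-P}_\mu D^\mu_\la = D^{\mu-P}_\la,
\]
which I would prove by induction on $\mathrm{deg}(\la,\mu)$. The base case $\mu-Q=\la$ is an immediate application of the non-commuting relation \eqref{noncommuting}. For the inductive step, either one can factor $D^\mu_\la=D^\mu_{\mu-Q}D^{\mu-Q}_\la$ (when $Q$ is not covered by another $Q'\in\{Q^i\}$) and apply \eqref{noncommuting} directly, or one factors through some $Q'$ that commutes with $P$, applies the commuting relation \eqref{commuting}, and invokes the inductive hypothesis on $(\la,\mu-Q')$.

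Finally, I would identify the socle. Because $\mathcal{H}_{m,n}$ is positively graded and generated in degrees $\{0,1\}$, the grading filtration is simultaneously the radical and socle filtration; the socle therefore consists of the top-degree layer $\mathrm{DP}_{k_\la}(\la)$. It remains to verify that this layer is a single element, and to identify $\mu_\la$ combinatorially as in Remark~\ref{socle}: one checks that $\mu_\la$ is the unique element of $\mathrm{DP}(\la)$ for which (1) no addable Dyck path $P$ preserves the Dyck-tiling condition and (2) $\{Q^i\}$ is downward closed under $\prec$; conversely, any $\mu$ of lower degree fails one of these, so admits a strictly larger element of $\mathrm{DP}(\la)$ reachable via a degree-$1$ generator. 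I expect the main obstacle to be the non-commuting inductive identity above; the combinatorial bookkeeping (distinguishing the cases $Q\prec Q'$ versus not, and ensuring the chosen $Q'$ really commutes with $P$) is where the proof must be executed with care, rather than the relations themselves.
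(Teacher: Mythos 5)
Your proposal is correct and follows essentially the same route as the paper: reduce via positivity and degree-$1$ generation to the action of the generators $D^\nu_\mu$ on the basis $\{u_\mu\}$, settle the addable case directly from the light-leaves construction, prove the removable-case identity $D^{\mu-P}_\mu D^\mu_\la = D^{\mu-P}_\la$ by the same induction splitting into the commuting and non-commuting relations, and identify the socle via the uniqueness of $\mu_\la$ and the reachability of a higher-degree $u_\nu$ from every non-maximal $u_\mu$. The only slight compression is asserting up front that the grading filtration equals the radical and socle filtrations from positivity and degree-$1$ generation alone; as in the paper, this also needs the reachability statements, which you do supply at the end.
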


An example is provided in \cref{standard21}.

\begin{figure}[ht!]
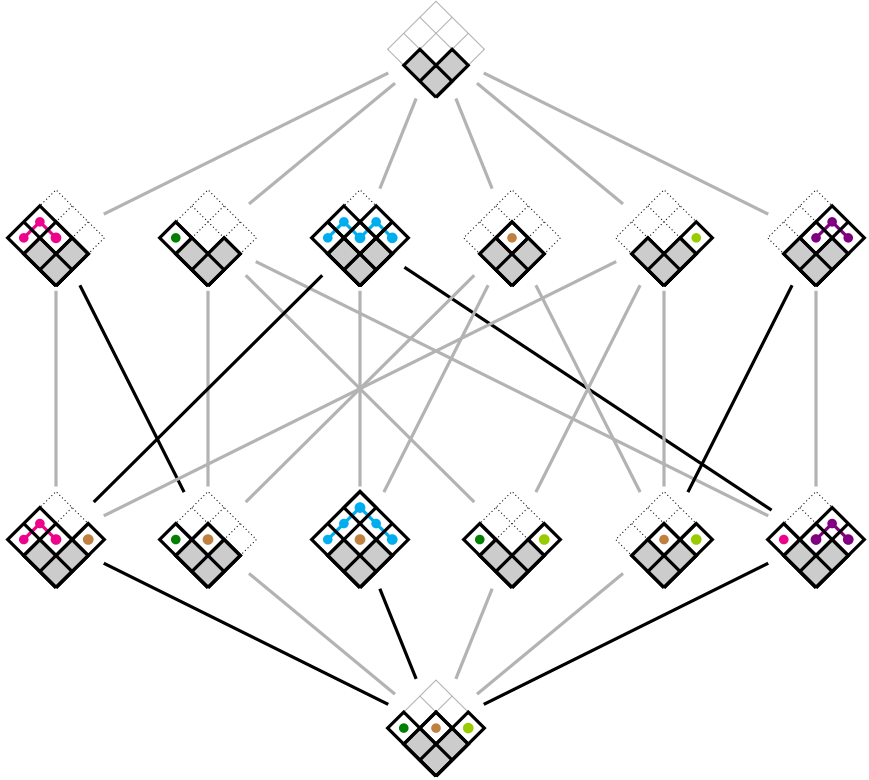

$$
 % [inline block 26: 1 envs, 20570 chars -> data_tex | \begin{tikzpicture}[scale=1 ] ...]

 $$     
\caption{The Alperin diagram for the standard module $\Delta_{3,3}(2,1)$.  We    use    grey lines to indicate pairs obtained by adding a Dyck path and black lines to indicate the pairs obtained by removing a Dyck path.
}\label{standard21}
\end{figure}

     \section{The isomorphism between Hecke categories and  Khovanov arc algebras}

We now utilise  our newfound  presentations  in order to prove that the Khovanov arc algebras and   Hecke categories are isomorphic as $\ZZ$-graded $\Bbbk$-algebras for $\Bbbk$ any commutative integral domain containing a square root of $-1$.

   \subsection{Signs and the statement of the isomorphism}
For the purpose of defining our isomorphism,   we will wish to consider all degree 1 diagrams in the Khovanov arc algebra. 
 Using the dilation  homomorphism of section 6, we are often able to restrict our 
attention to diagrams which are incontractible. which by \cref{incontr-prtns} are of the form $ \underline{ \mu} { \la}  
  \overline{ \la}$ (or its dual) for $\mu$ a rectangle and $\lambda=\mu -P$ with $b(P)=1$.
 The following lemma is immediate by construction of the arc diagrams.
 
 \begin{lem} 
The diagrams $\underline{\mu}\la \overline{\la}$ for $(\mu,\la) = ((1), \emptyset)$, $(\mu,\la) =( (c),  (c-1))$ and $(\mu,\la) =
( (1^r),  (1^{r-1}))$ are given respectively by
  the   {\sf arc, left-zigzag} and  {\sf right-zigzag} 
 diagrams 
\begin{equation}\label{gens-arc0} 
 % [inline block 27: 4 envs, 3800 chars -> data_tex | \begin{tikzpicture}  [yscale=0.76,xscale=-0.76]   ...]
 \end{minipage}
\end{equation}
with $c-2$ concentric circles and  a total of $r-c$ vertical strands to the left of the diagram.  The case with 
$c\geq r \geq 1$ is similar but with $r-2$ 
concentric circles
and  a total of $c-r$ vertical strands to the right of the diagram.

 \end{lem}

\begin{rmk}
The trivial embeddings $\mptn \rightarrow \mathscr{P}_{m+1,n}$ and $\mptn \rightarrow \mathscr{P}_{m,n+1}$ sending a partition to itself extends to algebra embeddings $\mathcal{K}_{m,n} \rightarrow \mathcal{K}_{m+1, n}$ and $\mathcal{K}_{m,n} \rightarrow \mathcal{K}_{m, n+1}$ defined on the arc diagrams by adding an upwards strand to the left or a downwards strand to the right. We have chosen to represent each arc diagram $\underline{\mu}\la\overline{\nu}$ in the smallest $\mathcal{K}_{m,n}$ where it is defined to avoid drawing lots of vertical strands which play no role in the multiplication or the next definition.
\end{rmk}

\begin{defn}\label{sign-defn}
Let $(\la, \mu)$ be a Dyck pair of degree $1$. Then $\la = \mu - P$ for some $P\in {\rm DRem}(\mu)$. We set ${\sf sgn}(\la, \mu)$ to be the average of the elements in the set $\underline{\sf cont}(P)$. In other words if the unique clockwise cup in $\underline{\mu}\la\overline{\la}$ connects vertices in position $i-\tfrac{1}{2}$ and $j+\tfrac{1}{2}$ for $i\leq j$ then ${\sf sgn}(\la, \mu) = \tfrac{1}{2}(j+i)$
   \end{defn}
   
\begin{eg} The    generators   $ \underline{ \mu} { \la}  
  \overline{  \la}$
 of the form 
     $$
 % [inline block 28: 5 envs, 2364 chars -> data_tex | \begin{tikzpicture}  [scale=0.76]  ...]

$$
for $\mu=(1^1)$, $(2)$, $(1^2)$, $(2^2)$, $(3)$ respectively    and $\la = \mu - P$ with $b(P) = 1$  have signs $0, -1, 1, 0$ and $-2$.

\end{eg}

\begin{thm}\label{isomorphismthem}

We have a graded $\Bbbk$-algebra isomorphism $\Psi: \mathcal{H}_{m,n }   \to \mathcal{K}_{m,n}$ defined on generators by setting, for all $\la,\mu\in \mptn$ such that  $(\la, \mu)$  is a Dyck pair of degree $1$,
$$
\Psi({\sf 1}_\la)  = 
\underline{ \la}  { \la} 
 \overline{ \la},
\qquad
\Psi (D^\la_\mu ) = 
i^{{\sf sgn}( \la, \mu)}
 \underline{ \la} { \la}  
  \overline{  \mu}\qquad  \Psi (D^\mu_\la ) = 
i^{{\sf sgn}( \la, \mu)}
 \underline{ \mu} { \la}  
  \overline{  \la}
$$
where  $i$ is a square root of $-1$ in $\Bbbk$. 
\end{thm}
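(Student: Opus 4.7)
The plan is to prove the theorem by verifying that $\Psi$, defined on the generators listed in \eqref{geners}, respects the quadratic presentation of $\mathcal{H}_{m,n}$ given by \cref{presentation}, and then showing that $\Psi$ takes the light leaves basis of $\mathcal{H}_{m,n}$ to a scalar multiple of the standard diagrammatic basis of $\mathcal{K}_{m,n}$. For the latter, I would show by induction on the degree that
\[
\Psi(D^\mu_\la D^\la_\nu) \;=\; i^{{\sf sgn}(\la,\mu)+{\sf sgn}(\la,\nu)}\, \underline{\mu}\la\overline{\nu},
\]
where ${\sf sgn}(\la,\mu)$ is extended to arbitrary Dyck pairs as the sum of the signs of the degree $1$ pairs produced by peeling off one removable Dyck path at a time. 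Since both algebras are free $\Bbbk$-modules with bases indexed by the same set (triples $(\la,\mu,\nu)$ of partitions in $\mptn$ with $(\la,\mu)$ and $(\la,\nu)$ Dyck pairs by \cref{TLDyck,corDyckpairs,heere ris the basus}), bijectivity of $\Psi$ will follow once well-definedness is established.

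The idempotent relations \eqref{rel1} are immediate because the diagrams $\underline{\la}\la\overline{\la}$ are orthogonal idempotents in $\mathcal{K}_{m,n}$. For the commuting relations \eqref{commuting}, when $P,Q\in{\rm DRem}(\mu)$ commute, the cups corresponding to $P$ and $Q$ in $\underline{\mu}$ are disjoint, so the surgery procedure applied to the two composites $\Psi(D^{\mu-P-Q}_{\mu-P})\Psi(D^{\mu-P}_\mu)$ and $\Psi(D^{\mu-P-Q}_{\mu-Q})\Psi(D^{\mu-Q}_\mu)$ involves non-interacting surgeries and yields the same final arc diagram $\underline{\mu-P-Q}\,(\mu-P-Q)\,\overline{\mu}$; the sign prefactors on each side agree since $\underline{\sf cont}(P)$ and $\underline{\sf cont}(Q)$ are disjoint. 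The non-commuting relation \eqref{noncommuting} covers the case $P\prec Q$, and one verifies it directly on the nested arcs, paying attention that the two intermediate factorisations $\mu-P-Q^1$ and $\mu-P-Q^2$ yield the same arc diagram after surgery, again with matching sign contributions.

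For the adjacent relation \eqref{adjacent}, I would first handle the base case $b(P)=b(Q)=1$ by direct calculation (see e.g.\ \cref{brace-surgery,brace-surgery2} which illustrate exactly this type of merge/split in the arc algebra): the product of two adjacent arcs either yields the enclosing cup diagram associated with $\langle P\cup Q\rangle_\mu$ up to an alternating sign coming from the merging rule $1\otimes x \mapsto x$, or else produces an unmatched $x\otimes x$ and vanishes --- exactly mirroring the cyclotomic vanishing on the Hecke side. The general case then follows by applying the dilation homomorphisms from \cref{dilate-K}, which commute with $\Psi$ by construction (the sign factor $i$ in the definition of $\varphi_k$ on $\mathcal{H}_{m,n}$ was engineered precisely so that $\Psi\circ\varphi_k = \varphi_k\circ\Psi$, since dilation on the arc side adds a new clockwise cup contributing a shift of $\pm\tfrac12$ to the sign). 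The main obstacle will be verifying the self-dual relation \eqref{selfdualrel}: here one must reconcile the alternating sum over covering and adjacent Dyck paths with the surgery expansion of a ``barbell'' $\underline{\la}\la\overline{\mu}\cdot\underline{\mu}\la\overline{\la}$, whose evaluation via $1\mapsto 1\otimes x + x\otimes 1$ produces precisely one anti-clockwise circle $+$ one clockwise circle pair of terms per surgery step --- the clockwise circles are what match (via reorientation) the sum on the right-hand side of \eqref{selfdualrel}. I would prove this by induction on $b(P)$, with the base case $b(P)=1$ being a repackaging of \cref{generationofspots}, using dilation to climb up, and careful sign bookkeeping comparing $(-1)^{b(Q)}$ on the Hecke side against the number of clockwise-cup re-orientations produced by surgery on the arc side.
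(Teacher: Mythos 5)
Your first half---verifying that $\Psi$ respects the presentation of \cref{presentation} by direct surgery computations in the incontractible cases and then propagating via the dilation maps---is essentially the paper's own argument, and your sketches of the commuting, non-commuting, adjacent and self-dual checks follow the same lines (one small slip: in the adjacent relation the vanishing case comes from the rule $y\otimes y\mapsto 0$ applied to two $\up$-oriented propagating strands, not from $x\otimes x$). The genuine gap is in your bijectivity step. The exact identity you propose to prove by induction, $\Psi(D^\mu_\la D^\la_\nu)=i^{{\sf sgn}(\la,\mu)+{\sf sgn}(\la,\nu)}\,\underline{\mu}\la\overline{\nu}$, is false: the image of a light leaves basis element is in general \emph{not} a scalar multiple of a single arc diagram. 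Already in $\mathcal{K}_{2,2}$ take $\la=(2,1)$ and $\mu=\nu=(2,2)$ (a Dyck pair of degree $1$ with ${\sf sgn}=0$, so the powers of $i$ are trivial here); then $D^{(2,2)}_{(2,1)}D^{(2,1)}_{(2,2)}$ is a cellular basis element, while on the arc side
\begin{equation*}
\bigl(\underline{(2,2)}\,(2,1)\,\overline{(2,1)}\bigr)\bigl(\underline{(2,1)}\,(2,1)\,\overline{(2,2)}\bigr)
\;=\;\underline{(2,2)}\,(2,1)\,\overline{(2,2)}\;+\;\underline{(2,2)}\,(1)\,\overline{(2,2)}.
\end{equation*}
Indeed the first surgery merges the two anticlockwise circles (cups of $\underline{(2,2)}$ with the caps of $\overline{(2,1)}$ below, and cups of $\underline{(2,1)}$ with the caps of $\overline{(2,2)}$ above) into one anticlockwise circle, and the second surgery then splits that single circle into two nested circles, so the rule $1\mapsto 1\otimes x + x\otimes 1$ contributes two terms: one retains the weight $(2,1)$ and the other has the strictly smaller weight $(1)$.

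Consequently the most you can hope for is the unitriangular statement \eqref{profoftheform}, namely $\Psi(D^\mu_\la D^\la_\nu)$ equals (a unit times) $\underline{\mu}\la\overline{\nu}$ plus terms $\underline{\mu}\zeta\overline{\nu}$ with $\zeta<\la$, and bijectivity then follows from triangularity rather than from an exact basis-to-basis match. But establishing even this requires an argument your proposal does not contain: you must show the leading term always survives with coefficient exactly $1$, i.e.\ that in the evaluation of $(\underline{\mu}\la\overline{\la})(\underline{\la}\la\overline{\nu})$ none of the annihilating rules $x\otimes x\mapsto 0$, $x\otimes y\mapsto 0$, $y\otimes y\mapsto 0$ (with equal orientations) is ever invoked, and that every non-annihilating surgery preserves the weight $\la$ in at least one summand. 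This is exactly the inside/outside topological case analysis of the paper's proof (its Step 2), together with its Step 1 showing each $\underline{\mu}\la\overline{\la}$ is a product of degree-one diagrams via the local idempotent circles of \cref{local-surg}; your peeling argument reproduces Step 1, but Step 2 is missing and cannot be replaced by the false exact formula.
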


\subsection{Proof of the isomorphism}The remainder of this section is dedicated to the proof of \cref{isomorphismthem}.

 \begin{lem}
 [Local idempotent relations]\label{local-surg}
  Any anticlockwise-oriented circle which intersects the weight at {\em precisely} two points is a local weight-idempotent in the following sense:
applying  the local surgery procedure at this point is equivalent to simply
 deleting this circle (see \cref{local-idem} for an example). 
  \end{lem}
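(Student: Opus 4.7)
The plan is to reduce the lemma to the unit identity of the Khovanov Frobenius algebra. Since $C$ is a closed curve meeting the weight at precisely two points, say $v_1$ and $v_2$, it must be formed by exactly one cap and one cup sharing the common endpoint pair $\{v_1,v_2\}$. The anticlockwise orientation of $C$ assigns to it the label $1 \in V = \Bbbk\langle 1,x\rangle$, which is the multiplicative unit of the underlying Frobenius algebra encoded in the surgery rules of \cref{Khovanov  arc algebras}.

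I would then examine the local surgery move that takes place at the two endpoints of $C$. This move is a merge that combines $C$ with the arcs on the opposite side of the weight (the cap and cup of $C$ are matched against the corresponding arcs across the weight line, joining $C$ to a neighbouring component). By the merging rule $1 \otimes c \mapsto c$ for $c \in \{1,x,y\}$ listed in \cref{Khovanov  arc algebras}, the label of the neighbouring component is preserved after the merge, so the effect is simply to absorb the cap and cup of $C$ into the adjacent component without altering any label. Comparing this with the effect of deleting $C$ --- removing the cap and cup together with the closed loop but leaving the rest of the diagram intact --- the two outputs agree.

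The main obstacle is to verify that in every possible local configuration the surgery at $\{v_1,v_2\}$ reduces to a merge rather than a split or some more exotic move. Since $C$'s cap and cup share both endpoints exclusively with each other on the side of the weight where the loop lies, $C$ is an isolated component on that side, and the matching across the weight therefore joins $C$ to a distinct component. Consequently only the merging rule applies, and the lemma follows directly from the unit axiom $1\cdot c = c$ of the Frobenius algebra; the accompanying example in \cref{local-idem} should then illustrate the move schematically.
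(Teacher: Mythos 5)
Your argument is correct and is essentially the paper's own proof: the paper likewise observes that the anticlockwise circle carries the unit label and invokes the merging rules $1\otimes 1\mapsto 1$, $1\otimes x\mapsto x$ and $1\otimes y\mapsto y$ of \cref{Khovanov  arc algebras}, so that the surgery at the two weight points absorbs the circle without changing any orientation, exactly as deletion would. Your extra check that the move is necessarily a merge (since the circle is a closed component whose two arcs meet only each other, the surgered cup--cap pair joins it to a distinct component) is a harmless elaboration of the same one-line argument.
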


 \begin{proof}
 This follows immediately using the surgery procedures $1\otimes 1 \mapsto 1$, $1\otimes x \mapsto x$ and $1\otimes y \mapsto y$.
 \end{proof}

\begin{figure}[ht!]
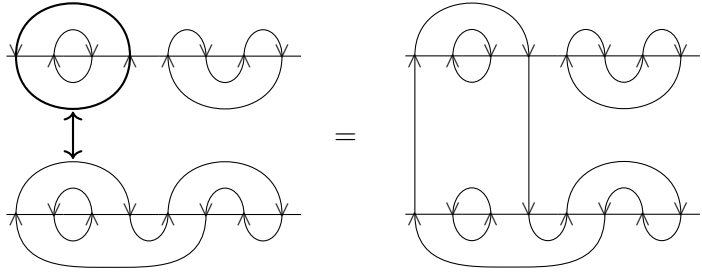

  \begin{equation*} 
 \quad   \begin{minipage}{4.5cm}
 % [inline block 29: 2 envs, 5179 chars -> data_tex | \begin{tikzpicture}  [xscale=0.5,yscale=0.7] ...]
 \end{minipage}
 \end{equation*} 
\caption{An indicative example of the local idempotent multiplication. Notice that the righthand-side can be thought of as being obtained by deleting the bold circle (together with its weight arrows) and ``stretching" the bottom component so that it reaches the top.
}
\label{local-idem}
\end{figure}

 \begin{prop}[The idempotent relations]
 The idempotent relations are preserved by $\Psi$. 
 \end{prop}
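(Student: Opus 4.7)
The plan is to verify both relations in~(\ref{rel1}) after applying $\Psi$. Since the scalar $i^{{\sf sgn}(\la,\mu)}$ commutes with multiplication, the task reduces to checking the three diagrammatic identities
\begin{align*}
(\underline{\mu}\mu\overline{\mu})(\underline{\la}\la\overline{\la}) &= \delta_{\la,\mu}\,\underline{\la}\la\overline{\la}, \\
(\underline{\la}\la\overline{\la})(\underline{\la}\la\overline{\mu}) &= \underline{\la}\la\overline{\mu}, \\
(\underline{\la}\la\overline{\mu})(\underline{\mu}\mu\overline{\mu}) &= \underline{\la}\la\overline{\mu}
\end{align*}
in $\mathcal{K}_{m,n}$. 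When $\la\neq\mu$, the first identity is immediate from the opening rule of Khovanov multiplication: the product vanishes because the cap-weight $\mu$ of the left factor does not match the cup-weight $\la$ of the right factor.

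The remaining cases all assert that $\underline{\nu}\nu\overline{\nu}$ acts as a left or right identity on any oriented arc diagram whose matching weight is $\nu$. To prove this, I would stack the two factors and examine the middle region between the two weight strips of the stacked diagram. The ``inner'' cup and cap diagrams of the two factors populate this region, and the surgery procedure must be applied to any closed components that form there. The key observation is that $\underline{\nu}\nu$ and $\nu\overline{\nu}$ are oriented of degree zero, so every closed loop arising in the middle region will be anti-clockwise oriented and will intersect the relevant weight strip at precisely two points.

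By Lemma~\ref{local-surg}, each such anti-clockwise loop is a local weight-idempotent: performing the surgery procedure on it is equivalent to simply deleting it. Iterating this deletion for each closed loop collapses the middle region entirely and reduces the stacked diagram to the surviving arc diagram from the non-idempotent factor, which is precisely the right-hand side of the identity. The main technical point will be to verify that every closed component produced during the surgery is genuinely an anti-clockwise circle meeting a weight strip at exactly two points; this is a careful consequence of the degree-zero orientations already noted, and once established, Lemma~\ref{local-surg} does all the remaining work.
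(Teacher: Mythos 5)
Your proposal is correct and matches the paper's argument: the paper likewise observes that $\underline{\la}\la\overline{\la}$ consists only of anticlockwise circles meeting the weight at precisely two points and then invokes \cref{local-surg} to delete them, which is exactly your reduction (the $\la\neq\mu$ vanishing being immediate from the matching condition on weights). The only cosmetic difference is that you phrase the key step as a claim about loops formed in the middle region, whereas it suffices to note the circles of the idempotent factor itself have the required form before each surgery step.
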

 
\begin{proof}
Note that the element $\underline{\la}\la\overline{\la}$ contains only anticlockwise circles intersecting the weight at precisely two points. Thus the result follows from \cref{local-surg}.
\end{proof}
    
    \begin{prop}\label{firstrel}(The self-dual relation)
Let  $P\in {\rm DRem}(\mu)$ and $\la = \mu - P$. Then we   have 
\begin{eqnarray*}  (-1)^{\sgnlamu}
  \underline{\la} \la  \overline{\mu}
  \cdot  \underline{\mu} \la  \overline{\la}
= &&
2
\!\!  \sum_{   \begin{subarray}{c} 
\nu= \la- Q \\ P\prec Q \end{subarray}}
 \!
(-1) ^{b(Q)+b(P)-1+ {  \sgnnula 	} }  \underline{\la} \nu  \overline{\nu}
  \cdot  \underline{\nu} \nu \overline{\la}\\ 
  && + \!
     \sum_{   \begin{subarray}{c} 
\nu= \la- Q \\ P \text { adj. } Q \end{subarray}}
 \!\!\!\!\!
(-1) ^{b(Q)+b(P)-1+ {  \sgnnula 	} }  \underline{\la} \nu  \overline{\nu}
  \cdot  \underline{\nu} \nu \overline{\la} .  
\end{eqnarray*}
  \end{prop}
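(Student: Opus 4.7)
The plan is to prove this equality of arc-diagram products by first reducing to an incontractible base case via the dilation homomorphisms and then computing the product explicitly in that case.

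For the reduction, by \cref{incontr-prtns} any non-incontractible Dyck pair $(\la,\mu)$ of degree~$1$ arises as $(\varphi_k(\la'), \varphi_k(\mu'))$ for a pair $(\la', \mu')$ with $b(P') \leq b(P)$, and by \cref{dilate-K} the dilation $\varphi_k : \mathcal{K}_{m,n} \to \mathcal{K}_{m+1,n+1}$ is an injective graded algebra homomorphism. I would check that the combinatorial data transform correctly under $\varphi_k$: the map $Q' \mapsto \varphi_k(Q')$ gives a bijection between removable Dyck paths $Q'$ of $\la'$ with $P' \prec Q'$ (resp.\ adjacent to~$P'$) and those $Q$ of $\la$ with $P \prec Q$ (resp.\ adjacent to~$P$); the breadths $b(P)$ and $b(Q)$ either both increase by~$1$ or neither does, depending on whether $k$ lies in the relevant content interval; and the sgn statistics $\sgnlamu$, $\sgnnula$ shift in a compatible way. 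A case analysis on $k \in \underline{\sf cont}(P)$ versus $k \notin \underline{\sf cont}(P)$ then reduces the relation for $(\la,\mu)$ to the corresponding relation for $(\la',\mu')$.

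In the base case $(\la,\mu)$ is incontractible so by \cref{incontr-prtns} we may take $\mu = (c^r)$ rectangular (with $c \leq r$, the dual case being symmetric) and $P$ a single tile of content $r-c$. The diagram $\underline{\mu}\la\overline{\la}$ is then, up to vertical strands, the brace generator of \eqref{gens-arc1}: a clockwise cup nested inside $c-2$ concentric anticlockwise cups together with the matching oriented cap structure. Stacking $\underline{\la}\la\overline{\mu}$ below gives a central region $\overline{\mu}\cdot\underline{\mu}$ containing exactly one clockwise circle $C$ (encoding~$P$) nested inside $c-2$ anticlockwise circles; the latter are local weight-idempotents by \cref{local-surg} and so may be deleted. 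The non-trivial surgeries then take place between $C$ and the anticlockwise circles touching $C$ in the combined diagram. Using the splitting rule $1 \mapsto 1\otimes x + x\otimes 1$ and the merging rules recalled in Section~3, each anticlockwise circle adjacent to $C$ corresponds to a removable Dyck path $Q$ of $\la$ either adjacent to $P$ (contributing a single term, as in \cref{brace-surgery2}) or covering $P$ (contributing two terms via the two summands of the splitting rule, as in \cref{brace-surgery}), which explains the factor of~$2$ in the first sum.

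The main obstacle will be the sign bookkeeping: the left-hand side carries $(-1)^{\sgnlamu}$ while each term on the right demands $(-1)^{b(Q)+b(P)-1+\sgnnula}$, so I would verify that the surgery output at each $Q$ produces precisely this sign. This reduces to computing $\sgnnula - \sgnlamu$ in terms of the combinatorics of the cup diagram $\underline{\la}$, which in turn splits into sub-cases depending on whether $Q$ lies entirely to the left of $P$, entirely to the right, or covers $P$. In each sub-case the sgn-difference is a linear function of $b(Q)$ and $b(P)$ coming from the geometric positions of the Dyck paths, and I expect matching this against $b(Q)+b(P)-1$ modulo~$2$, together with the overall sign governed by the orientation produced by the surgery, to be the core combinatorial identity underlying the relation.
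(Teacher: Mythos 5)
Your scaffolding (reduce to the incontractible rectangular case via dilation, then compute there by surgery) is the same as the paper's, but the mechanism you propose for the base case is not what the surgery actually produces, and this is a genuine gap. In the rectangle case $\mu=(c^r)$, $b(P)=1$, $m=\min\{r,c\}$, the $m-2$ concentric anticlockwise circles in $\underline{\la}\la\overline{\mu}\cdot\underline{\mu}\la\overline{\la}$ are removed by the merging rule $1\otimes 1\mapsto 1$ (the local idempotent relation of \cref{local-surg}); they never trigger the splitting rule. The only splitting occurs once, in the second of the two surgeries on the innermost brace pair (\cref{brace-surgery}), so the left-hand side expands into exactly \emph{two} basis diagrams --- it does not decompose as one term per $Q$ adjacent to $P$ plus two terms per $Q$ covering $P$, and neither the factor $2$ nor the signs $(-1)^{b(Q)}$ can be read off the left-hand surgery as you suggest. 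What the argument actually requires is to expand every right-hand product $\underline{\la}\nu\overline{\nu}\cdot\underline{\nu}\nu\overline{\la}$ by surgery as well: after the preliminary sgn/breadth computation (adjacent $Q$ have ${\sf sgn}(\nu,\la)={\sf sgn}(\la,\mu)\pm 1$ and $b(Q)=1$; covering $Q$ have ${\sf sgn}(\nu,\la)={\sf sgn}(\la,\mu)$ and $b(Q)=x+1$) the statement reduces to \cref{selfdual-rectangle}, in which the covering terms carry \emph{alternating} signs $(-1)^{x+1}$; each adjacent term expands into one of the two left-hand diagrams plus a common extra diagram $D_1$, each covering term with $2\le x\le m-2$ expands into $D_{x-1}+D_x$, and the outermost covering term gives $D_{m-2}$ alone, so the whole right-hand side telescopes down to the two left-hand diagrams. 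Your plan never computes the right-hand products at all, and your ``two summands of the splitting rule per covering circle'' explanation of the factor $2$ would in any case be incompatible with the alternating signs that make the cancellation work.

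Two smaller omissions: the degenerate rectangles ($r=1$ or $c=1$, with $r=c=1$ giving a zero product) fall outside your description of the brace picture and need separate, if easy, treatment; and in the dilation step the sign bookkeeping must distinguish not only $k\in\underline{\sf cont}(P')$ versus $k\notin\underline{\sf cont}(P')$ but also, in the latter case, whether $k\in\underline{\sf cont}(Q')$ for each $Q'$, since that is what makes the parities $b(Q)+b(P)-1+{\sf sgn}(\nu,\la)$ come out uniformly (in one branch the whole identity must be divided by $-1$). That half of your plan is otherwise sound and is essentially the paper's argument.
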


 \begin{proof} We first focus on the incontractible   case in which $\mu=(c^r)$ is a rectangle with $r,c\geq 1$ and $b(P)=1$ so that $\la=(c^{r-1},c-1)$.  
 
 \noindent {\bf The generic case. } We first consider the case  $r,c>1$ and set $m = \min \{ r,c\}$. In this case, the set of all Dyck paths $Q\in {\rm DRem}(\la)$ appearing on the right hand side of the equation can be described as follows. 
 The set 
 $$\{ Q\in {\rm DRem}(\la) \, : \, Q \, \text{ adj. }  \, P\} = \{Q^{-1}, Q^1\}$$
satisfies $b(Q^{\pm 1}) = 1$ and, setting $\nu_{\pm 1} = \la - Q^{\pm 1}$, we have ${\sf sgn}(\nu_{\pm 1}, \la) = {\sf sgn}(\la, \mu) \pm 1$.
The set 
$$\{Q\in {\rm DRem}(\la) \, : \, P\prec Q\} = \{Q^2, \ldots , Q^{m-1}\}$$
satisfies $b(Q^x) = x+1$ for $2\leq x\leq m-1$ and, setting $\nu_x = \la - Q^x$, we have ${\sf sgn}(\nu_x, \la) = {\sf sgn}(\la, \mu)$.  An example is given in Figure \ref{sum-relation}.

Thus, the self-dual relation can be rewritten in this case as
\begin{equation}\label{selfdual-rectangle}
\underline{\la}\la \overline{\mu} \cdot \underline{\mu}\la \overline{\la} = \underline{\la}\nu_{-1} \overline{\nu_{-1}} \cdot \underline{\nu_{-1}}\nu_{-1} \overline{\la} +\underline{\la}\nu_{1} \overline{\nu_{1}} \cdot \underline{\nu_{1}}\nu_{1} \overline{\la} + 2 \sum_{x=2}^{m-1} (-1)^{x+1} \underline{\la}\nu_{x} \overline{\nu_{x}} \cdot \underline{\nu_{x}}\nu_{x} \overline{\la}.
\end{equation}

The weight diagram for $\mu$ consists of $r$ down arrows followed by $c$ up arrows; the weight diagram for $\la=(c^{r-1},c-1)$ is obtained from that of $\mu$ by swapping the neighbouring $\down \up$.  Pictorially
$$
 \mu=
\left(\begin{minipage}{5 cm}
 % [inline block 30: 3 envs, 3290 chars -> data_tex | \begin{tikzpicture}   ...]
 \end{minipage}
  $$ plus $|r-c|$ vertical strands as in \cref{gens-arc1} on either side. 
In other words, $  \underline{\mu} \la  \overline{\la}
$ consists of a brace  surrounded by $m-2$ concentric anti-clockwise circles.  
We will ignore the vertical strands  as they have no effect on the multiplication.

Performing surgery on 
 $ \underline{\la} \la  \overline{\mu}
\cdot 
 \underline{\mu} \la  \overline{\la}$ 
we first apply \cref{local-surg} to the $m-2$ concentric circles one at a time (starting from the outermost pair of circles) until we obtain the diagram
$$
   \underline{\la} \la  \overline{\mu}\cdot 
 \underline{\mu} \la  \overline{\la}
  = \;\;
   \begin{minipage}{6cm}
 % [inline block 31: 9 envs, 16056 chars -> data_tex | \begin{tikzpicture}  [xscale=0.5,yscale=0.7]  % \begin{tikzpicture}  [scale=0.76] ...]
 \end{minipage} 
\end{equation}
where we have highlighted the pair of  re-oriented circles in each case (with the pink circle of degree 2 and the blue of degree 0).  Notice that the sum of the left-hand terms in each of 
(\ref{LMS2}) and (\ref{LMS3}) is the required sum (\ref{LMS}). The right hand term in (\ref{LMS2}) and (\ref{LMS3}) are identical, and we denote this diagram by $D_1$. We will show that the sum of these 2 terms (which is equal to $2D_1$) will cancel with the remaining terms in the larger sum.  
For $2\leq x\leq m-2$ we have 
\begin{equation}\label{eqna3}   \underline{\nu_x} \nu_x  \overline{\la}
=
    \begin{minipage}{9cm}
 % [inline block 32: 5 envs, 16843 chars -> data_tex | \begin{tikzpicture}   [xscale=0.5,yscale=0.7]   \clip(-6.75,-2.8) rectangle (10,2.8);...]
 \end{minipage}$$
which is equal to $D_{m-2}$.       Replacing all terms into  the right-hand side of (\ref{selfdual-rectangle})  we obtain 
$$(\underline{\la} \la\overline{\mu}\cdot \underline{\mu}\la \overline{\la} +2 D_1 ) + 2 \sum_{x=2}^{m-2} (-1)^{x+1} (D_{x-1}+D_x) + 2 (-1)^{m} D_{m-2} = \underline{\la} \la\overline{\mu}\cdot \underline{\mu}\la \overline{\la}$$
as required.

 \bigskip
\noindent {\bf Degenerate cases. } We now consider the degenerate cases 
in which   $r$ or $c$ is equal to 1.   
The $r=1=c$ case simply follows as 
$$
% (-1)^{\sgnlamu}
  \underline{\la} \la  \overline{\mu}
  \cdot  \underline{\mu} \la  \overline{\la}=
\begin{minipage}{1.2 cm}
   % [inline block 33: 9 envs, 11698 chars -> data_tex | \begin{tikzpicture}  [xscale=0.5,yscale=0.7] ...]
 \end{minipage}$$
 which is equal to 
$ \underline{\la} \nu_1  \overline{\nu_1}
  \cdot  \underline{\nu_1} \nu_1 \overline{\la} $ as required.  The $c=2$ case can be handled in an identical fashion (except that we stop after the first equality and note that there are no up-strands on the right of the diagrams).
  
  \medskip
  
  Finally, take $(\la, \mu)$ with $\la = \mu -P$ to be any Dyck pair of degree 1 which is contractible at $k$, for some $k$. Then there exists $(\la', \mu')$ with $\la' = \mu' - P'$ such that $(\la, \mu) = (\varphi_k(\la'), \varphi_k(\mu'))$. For each $Q'\in {\rm DRem}(\la')$, $\nu' = \la ' - Q'$ we write $\nu = \varphi_k(\nu') = \la - Q$. We can assume by induction that the self-dual relation holds for $(\la', \mu')$ and applying the dilation homomorphism we obtain
\begin{equation}  \begin{array}{lll}  (-1)^{{\sf sgn}(\la',\mu')}
  \underline{\la} \la  \overline{\mu}
  \cdot  \underline{\mu} \la  \overline{\la}
& = &
2
 \displaystyle\sum_{   \begin{subarray}{c} 
\nu= \la- Q \\ P\prec Q \end{subarray}}

(-1) ^{b(Q')+b(P')-1+ {  {\sf sgn}(\nu' , \la')}	}   \underline{\la} \nu  \overline{\nu}
  \cdot  \underline{\nu} \nu \overline{\la}\\ 
  && \ + 
 \displaystyle     \sum_{   \begin{subarray}{c} 
\nu= \la- Q \\ P \text { adj. } Q \end{subarray}}

(-1) ^{b(Q')+b(P')-1+ {  {\sf sgn}(\nu' , \la')	} }  \underline{\la} \nu  \overline{\nu}
  \cdot  \underline{\nu} \nu \overline{\la}
  \end{array}\end{equation}\label{primed}
 First consider the case where $k\in \underline{\sf cont}(P')$. Then we have ${\sf sgn}(\la', \mu' ) = {\sf sgn}(\la , \mu)$ and $b(P') = b(P)-1$. For each $Q$ such that $P\prec Q$, we have $b(Q') = b(Q)-1$ and ${\sf sgn}(\nu', \la') = {\sf sgn}(\nu , \la)$ so we have
 \begin{eqnarray*}
 (-1)^{b(Q')+b(P')-1+{\sf sgn}(\nu', \la')}  &=&  (-1)^{b(Q)-1+b(P)-1-1+{\sf sgn}(\nu, \la)}\\
 &=&  (-1)^{b(Q)+b(P)-1+{\sf sgn}(\nu, \la)}.
 \end{eqnarray*}
 On the other hand, for each $Q$ adjacent to $P$, we have $b(Q') = b(Q)$ and ${\sf sgn}(\nu', \la') = {\sf sgn}(\nu , \la)\pm 1$ and so we have
 \begin{eqnarray*}
 (-1)^{b(Q')+b(P')-1+{\sf sgn}(\nu', \la')}  &=&  (-1)^{b(Q)+b(P)-1-1+{\sf sgn}(\nu, \la) \pm 1}\\
 &=&  (-1)^{b(Q)+b(P)-1+{\sf sgn}(\nu, \la)}.
 \end{eqnarray*}
So we obtain the required relation for $(\la, \mu)$.  

It remains to consider the case where $k\notin \underline{\sf cont}(P')$. In this case we have ${\sf sgn}(\la', \mu' ) = {\sf sgn}(\la , \mu)\pm 1$ and $b(P') = b(P)$. Let $Q'\in {\rm DRem}(\la')$. If $k\notin \underline{\sf cont}(Q')$ then we have $b(Q') = b(Q)$ and ${\sf sgn}(\nu', \la' ) = {\sf sgn}(\nu , \la)\pm 1$ and so we have 
$$ (-1)^{b(Q')+b(P')-1+{\sf sgn}(\nu', \la')}  =  (-1)^{b(Q)+b(P)-1+{\sf sgn}(\nu, \la)\pm 1}.$$
If $k\in \underline{\sf cont}(Q')$ then we have $b(Q') = b(Q)-1$ and ${\sf sgn}(\nu', \la' ) = {\sf sgn}(\nu , \la)$ and so we have 
$$ (-1)^{b(Q')+b(P')-1+{\sf sgn}(\nu', \la')}  =  (-1)^{b(Q)-1+b(P)-1+{\sf sgn}(\nu, \la)}.$$
Thus dividing the equation (\ref{primed}) by $-1$ gives the required equation for $(\la, \mu)$.
 \end{proof}

\begin{prop}[The commuting relations]
Let $P,Q\in {\rm DRem}(\mu)$ with $P\prec Q$ which commute, and let $\la=\mu-P$, $\nu=\mu-Q$, and $\alpha=\mu-P-Q$.
We have that
\begin{align}
i^{\sgnalla+\sgnlamu} \underline{\alpha} \alpha \overline{\la} \cdot \underline{\la} \la \overline{\mu} & = i^{\sgnalnu+\sgnnumu} \underline{\alpha} \alpha \overline{\nu} \cdot \underline{\nu} \nu \overline{\mu} \label{distcomm1} \\
i^{\sgnlamu+\sgnnumu} \underline{\nu} \nu \overline{\mu} \cdot \underline{\mu} \la \overline{\la} & = i^{\sgnalla+\sgnalnu} \underline{\nu} \alpha \overline{\alpha} \cdot \underline{\alpha} \alpha \overline{\la}. \label{distcomm2}
\end{align}
\end{prop}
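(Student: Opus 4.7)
The plan is to prove both equations in three stages. First, I would verify that the powers of $i$ match on each side. Since $P$ and $Q$ commute, their spot-fork supports $P_{sf}$ and $Q_{sf}$ are disjoint, so removing $Q$ from $\mu$ does not affect the portion of the cup diagram near $P$; in particular, the clockwise cup encoding $P$ has the same pair of endpoints in $\underline{\mu}\la\overline{\la}$ and in $\underline{\nu}\alpha\overline{\alpha}$. Thus $\sgnlamu = \sgnalnu$, and symmetrically $\sgnnumu = \sgnalla$. Adding these gives equality of the total $i$-exponents on the two sides of both (\ref{distcomm1}) and (\ref{distcomm2}), reducing the problem to the identities of raw arc-diagram products.

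Next, I would establish the arc-diagram identities in the incontractible base case, where $P$ and $Q$ are as small as possible subject to $P \prec Q$ and commutativity. Because $P$ and $Q$ commute, the cup for $Q$ in $\underline{\mu}$ is concentric with, but strictly larger than, the cup for $P$, with no shared endpoints. The product $\underline{\alpha}\alpha\overline{\la} \cdot \underline{\la}\la\overline{\mu}$ is evaluated by performing surgery on the middle $\overline{\la}\underline{\la}$. Away from the cups for $P$ and $Q$, all circles produced by the merging are anticlockwise and meet the weight in exactly two points, and hence act as local identities by \cref{local-surg}. The remaining surgery near the $P$-cup and the $Q$-cup can be carried out independently (the cups being disjoint), and each instance mirrors the computation performed for the isolated brace in \cref{brace-surgery}: merging the pair of anticlockwise circles adjacent to the single clockwise cup produces a single anticlockwise circle together with an undisturbed clockwise cup, yielding $\underline{\alpha}\alpha\overline{\mu}$. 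An identical calculation with $\la$ replaced by $\nu$ gives the same diagram, establishing (\ref{distcomm1}); the argument for (\ref{distcomm2}) is dual, swapping the roles of caps and cups.

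Finally, I would extend to the general case by induction on $b(P) + b(Q)$ using the dilation homomorphisms of \cref{dilate-K} and its analogue for $\mathcal{H}_{m,n}$. If the configuration $(\alpha, \la, \nu, \mu)$ is contractible at some $k$, there exists $(\alpha', \la', \nu', \mu')$ with $\varphi_k$-images $(\alpha, \la, \nu, \mu)$ and strictly smaller total breadth, and the induction hypothesis applies. One then transports the identity through $\varphi_k$. The main obstacle is sign bookkeeping: the $\pm i$ scalars introduced by $\varphi_k$ on the Hecke side must be reconciled with the parity shifts in the various $\sgnlamu$ on the Khovanov side. This is controlled by splitting into cases according as $k$ lies in $\underline{\sf cont}(P)$, in $\underline{\sf cont}(Q)$, or in neither; in each case the total parity change on the left-hand side equals that on the right-hand side of each equation (since $P$ and $Q$ appear in precisely one of $\underline{\sf cont}(P)$ or $\underline{\sf cont}(Q)$ and their roles are interchangeable between the two factorisations), so the inductive step preserves the identity.
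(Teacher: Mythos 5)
Your overall strategy---cancel the $i$-exponents using that ${\sf sgn}$ depends only on the content interval of the removed Dyck path, verify the raw arc-diagram identity by explicit surgery in an incontractible configuration, and transport to the general case by dilation---is exactly the paper's. Your surgery sketch is also essentially right: in the incontractible case every surgery is a merge of type $1\otimes 1\mapsto 1$ (no splits occur), so the appeal to \cref{brace-surgery}, whose second step is a split producing a sum of two terms, is a little loose, but the content you extract from it is correct, and both factorisations collapse to the same single diagram ($\underline{\alpha}\alpha\overline{\mu}$ for the first identity, $\underline{\nu}\alpha\overline{\la}$ for the second).

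The one genuine issue is your induction scheme. You take as base case the configuration in which ``$P$ and $Q$ are as small as possible'' and induct on $b(P)+b(Q)$ via contraction. But the incontractible configurations for this relation are the rectangles $\mu=(c^r)$ with $b(P)=1$ and \emph{arbitrary} $3\le b(Q)\le\min\{r,c\}$: for such a quadruple no simultaneous contraction exists (already $\mu$ and $\la=\mu-P$ are never both contractible at the same $k$), so the configurations with large $Q$ cannot be reached from a smaller base case by dilation and must all be handled directly by the surgery computation, as the paper does. Your surgery argument does not in fact use minimality of $Q$, so the repair is only to state the base case correctly---but as written the induction has a hole. Relatedly, the sign bookkeeping in your final stage is unnecessary: after stage 1 the $i$-powers on the two sides are literally equal and can be cancelled, leaving a scalar-free identity in $\mathcal{K}_{m,n}$, and the arc-algebra dilation $\varphi_k$ of \cref{dilate-K} introduces no scalars, so there is no Hecke-side $\pm i$ to reconcile. (Your parenthetical justification there is also off: since $P\prec Q$, the content interval of $P$ is contained in that of $Q$, so $k\in\underline{\sf cont}(P)$ forces $k\in\underline{\sf cont}(Q)$; this does not matter only because the whole sign discussion is superfluous.)
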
    

\begin{proof}
First note that ${\sf sgn}(\la, \mu) = {\sf sgn}(\alpha , \nu)$ as $\la = \mu - P$ and $\alpha = \nu - P$. Similarly, ${\sf sgn}(\alpha, \la) = {\sf sgn}(\nu , \mu)$. Thus we can cancel the signs on both sides of the equation.  
Now, if $P$ and $Q$ are distant then the result follows directly using \cref{local-surg}. It remains to consider the case where $P\prec Q$ and they commute.  
 We first focus on the  incontractible case in which     $\mu=(c^r)$ is a rectangle for $r,c>2$ and  $b(P)=1$ and $3 \leq b(Q) \leq \min\{r,c\}$ (note that we must have $b(Q)>2$ in order to commute with $P$).  
Set $m = \min \{r,c\}$ and assume that  $b(Q)=m$. 
We have that $\la=(c^{r-1},c-1)$, $\nu=((c-1)^{r-1}, r-m)$, and $\alpha=((c-1)^{r-2},c-2, r-m)$.
 Thus we have $\underline{\nu}\nu \overline{\mu} \cdot \underline{\mu}\la \overline{\la}$ is equal to 
 \begin{equation}\label{distcomm}
%  \underline{\la} \la  \overline{\mu}\cdot 
%    \underline{\mu} \nu  \overline{\nu}
%\;\;  =\;\;
   \begin{minipage}{5.75cm}
 % [inline block 34: 4 envs, 11934 chars -> data_tex | \begin{tikzpicture}  [xscale=0.5,yscale=0.7]  ...]
 \end{minipage} 
\end{equation}
In both equations there are $r-3$  dotted anti-clockwise circles in each diagram; and equality follows from  applying  the $1\otimes 1 \mapsto 1$ rule a total of $r$ times as required. 
A very similar calculation proves \cref{distcomm1}.
The other cases, where $3\leq b(Q) \leq m-1$ are completely analogous, except that the large arc in $\underline{\mu} \nu \overline{\nu}$ and $\underline{\la} \alpha \overline{\alpha}$ forms part of a zigzag or a brace.

Finally, the general case follows directly by applying the dilation homomorphism.
\end{proof}

% We now verify relation \ref{rel3}.  We  separate this out    into two distinct cases, 
% where $\nu  =\mu\pm Q$.

    \begin{prop}[The non-commuting relation]\label{almostthere}
Let $P,Q\in {\rm DRem}(\mu)$ with $P\prec Q$ which do not commute, and let $\la=\mu-P$ and $\nu=\mu-Q$. Then  $Q\setminus P = Q^1 \sqcup Q^2$ where $Q^1, Q^2 \in {\rm DRem}(\mu - P)$ and we
set $\alpha=\la-Q^1$ and $\beta=\la-Q^2$. 
We  have that 
%$$D^{\mu -Q}_\mu D^\mu_{\mu-P} = D^{\mu - Q}_{\mu - P - Q_1}D^{\mu - P - Q_1}_{\mu - Q} = D^{\mu - Q}_{\mu - P - Q_2}D^{\mu - P - Q_2}_{\mu - Q}
%$$
%
%
%
%    If   $\la  =\mu-P$ and $\nu  =\mu- Q$ for $P\neq Q$ then we have that 
\begin{equation}\label{thisoney}
  i^{\sgnlamu+\sgnnumu}  \underline{\nu} \nu  \overline{\mu}
  \cdot  \underline{\mu} \la  \overline{\la} 
  =
%(-1)^{rb(P\wedge Q)} 
%\Psi(
%D^{\la} _{\mu-P \wedge Q} 
%D^ {\mu-P \wedge Q} _{\nu }  )
%   (-1)^{\sgnlamu}
%  \underline{\la} \la  \overline{\mu}
%  \cdot  \underline{\mu} \la  \overline{\la} 
  %
  %
  %
%  (-1) ^{2b (P\wedge Q) }
  i^{ %2{\rm rb}(P\wedge Q) 
%  +
\sgnalla+\sgnnual
 } 
  \underline{\nu} \nu  \overline{ \alpha}
  \cdot  \underline{ \alpha}  \alpha \overline{\la}
    = 
  i^{  
{\sf sgn}(\beta,\la)+{\sf sgn}(\nu,\beta)
 } 
  \underline{\nu} \nu  \overline{ \beta}
  \cdot  \underline{ \beta}  \beta \overline{\la}.
\end{equation}
%if $\alpha=\mu-P \wedge Q$ is defined and 0 otherwise.

\end{prop}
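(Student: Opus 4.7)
The approach mirrors the proofs of the self-dual and commuting relations: first handle the incontractible base case by direct surgery computation in the Khovanov arc algebra, then bootstrap to the general case via the dilation homomorphism $\varphi_k$ of \cref{dilate-K}, which commutes with $\Psi$ on generators by construction.

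The plan is as follows. First I would reduce to the minimal situation where $b(P) = 1$ and $b(Q) = 2$, so that $|Q^1| = |Q^2| = 1$ and $Q$ is a three-tile Dyck path with $P$ at its peak. In this setting the underlying partition $\mu$ is small enough that the arc diagrams $\underline{\nu}\nu\overline{\mu}$, $\underline{\mu}\la\overline{\la}$, $\underline{\nu}\nu\overline{\alpha}$, $\underline{\alpha}\alpha\overline{\la}$ can each be drawn explicitly; the corresponding cup/cap diagrams differ only in how a pair of neighbouring caps and cups are paired off. Applying the surgery procedure to $\underline{\nu}\nu\overline{\mu}\cdot\underline{\mu}\la\overline{\la}$ produces a single cap-cup merge of type $1\otimes 1\mapsto 1$ (collapsing the anticlockwise circle which is the local idempotent at $P$, cf.\ \cref{local-surg,brace-surgery}), and the resulting diagram is manifestly equal to the analogous surgery output computed from $\underline{\nu}\nu\overline{\alpha}\cdot\underline{\alpha}\alpha\overline{\la}$. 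The same unique diagram is also obtained from the $\beta$-version by symmetry $Q^1\leftrightarrow Q^2$.

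Second, the sign exponents coincide on the nose, not merely modulo $4$. Indeed, writing $\underline{\sf cont}(P)=[\alpha_P,\omega_P]$ and $\underline{\sf cont}(Q)=[\alpha_Q,\omega_Q]$, the definition of covering non-commuting Dyck paths forces
\[
\underline{\sf cont}(Q^1)=[\alpha_Q,\alpha_P-1],\qquad \underline{\sf cont}(Q^2)=[\omega_P+1,\omega_Q]
\]
(up to swapping $Q^1,Q^2$), so by \cref{sign-defn},
\[
\sgnalla+\sgnnual=\tfrac12(\alpha_Q+\alpha_P-1)+\tfrac12(\omega_P+1+\omega_Q)=\sgnlamu+\sgnnumu,
\]
and identically for the $\beta$-side. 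Thus once the unsigned surgery equality holds, the signed equality \eqref{thisoney} holds verbatim. Note that all four sign values are genuine integers since $|P|,|Q|,|Q^1|,|Q^2|$ are all odd.

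Third, for the general case I would choose $k$ such that $(\mu,\la,\nu,\alpha,\beta)$ lies in the image of $\varphi_k$ and apply the induction hypothesis on $b(P)+b(Q)$ to the pre-image. The dilation homomorphism $\varphi_k\colon \mathcal{K}_{m,n}\to\mathcal{K}_{m+1,n+1}$ simply inserts the new pair of $\up\down$ vertices as an anticlockwise idempotent circle, so it sends each of the four arc-diagram products in \eqref{thisoney} to the analogous product at the dilated level without introducing any scalar. The change in the sign exponents under dilation is the same bookkeeping as in the final paragraph of the proof of \cref{firstrel}: if $k\in\underline{\sf cont}$ of the Dyck path then $b$ increases by $1$ while $\sgn$ is unchanged, and if $k\notin\underline{\sf cont}$ then $b$ is unchanged while $\sgn$ shifts by $\pm 1$; and the additive identity $\sgn(P)+\sgn(Q)=\sgn(Q^1)+\sgn(Q^2)$ is preserved under both of these shifts in a consistent way on both sides of \eqref{thisoney}.

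The main obstacle will be the explicit surgery computation in the first step, where one must track the interaction between the new caps/cups created by $P$ (sitting at the peak of $Q$) and the larger caps/cups of $Q$ itself. Unlike the self-dual relation, where the local picture reduces to a brace acted on by a pair of braces, here the cup of $P$ is threaded through the cap of $Q$, so the surgery involves a split $1\mapsto 1\otimes x+x\otimes 1$ followed by a merge, and one has to verify that the $x\otimes 1$-term produces the single diagram claimed on the right while the $1\otimes x$-term cancels (or is absorbed into the reorientation encoded by passing from $\alpha$ to $\beta$). Once this local calculation is accurate, the combinatorial sign identity and the dilation argument are essentially formal.
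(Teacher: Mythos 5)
Your strategy coincides with the paper's: one first checks that the four exponents agree as integers (your endpoint computation, giving ${\sf sgn}(\alpha,\la)+{\sf sgn}(\nu,\alpha)=\tfrac12(i_Q+i_P-1)+\tfrac12(j_P+1+j_Q)=\sgnlamu+\sgnnumu$, is exactly the one used), then reduces by the dilation homomorphism of \cref{dilate-K} -- which is scalar-free on arc diagrams, so once the exponents cancel nothing more needs to be tracked -- to the incontractible case, which is settled by explicit surgery. Two points in your treatment of that base case need correcting, and they are where the real content sits. First, ``incontractible'' does not mean small: it means $\mu=(c^r)$ is an \emph{arbitrary} rectangle with $b(P)=1$ and $b(Q)=2$ (no further contraction is possible, since the unique removable box of $\mu$ already lies in $P$), so the relevant diagrams carry a family of nested anticlockwise circles and the computation is not ``a single cap--cup merge'' but a chain of $1\otimes 1\mapsto 1$ merges through all of those circles (with the oriented-strand merge rules taking over in the thin cases $r=2$ or $c=2$, and a separate short check when $r=c=2$). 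Second, the worry in your closing paragraph does not materialise: in these particular products no split $1\mapsto 1\otimes x+x\otimes 1$ ever fires and no cancellation between an ``$x\otimes 1$'' and a ``$1\otimes x$'' term is needed -- every surgery step merges an anticlockwise circle with another component, as in \cref{local-surg}, and both $\underline{\nu}\nu\overline{\mu}\cdot\underline{\mu}\la\overline{\la}$ and $\underline{\nu}\nu\overline{\alpha}\cdot\underline{\alpha}\alpha\overline{\la}$ (and likewise the $\beta$-version) collapse to literally the same diagram. Finally, the dilation sign-bookkeeping in your third step is superfluous and slightly off target: the exponents of \eqref{thisoney} involve only ${\sf sgn}$, not $b$, so importing the $b$-shift analysis from \cref{firstrel} is a red herring; equality of the exponents plus scalar-freeness of $\varphi_k$ already gives the reduction.
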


\begin{proof}
We start by proving that all the signs in the equation are equal. Assume that $P$ corresponds to a cup connecting $i_P -\tfrac{1}{2}$ and $j_P +\tfrac{1}{2}$ and $Q$ corresponds to a cup connecting $i_Q -\tfrac{1}{2}$ and $j_Q +\tfrac{1}{2}$. Then $Q^1$ corresponds to a cup connecting $i_Q - \tfrac{1}{2}$ and $(i_P +1 ) -\tfrac{1}{2}$ and $Q^2$ corresponds to a cup connecting $(j_P +1)-\tfrac{1}{2}$ and $j_Q + \tfrac{1}{2}$. This implies that
\begin{eqnarray*}
{\sf sgn}(\alpha, \la) + {\sf sgn}(\nu , \alpha) &=& {\sf sgn}(\beta , \la) + {\sf sgn}(\nu , \beta) \\ &=& 
\tfrac{1}{2}({i_Q + i_P - 1}) + \tfrac{1}{2}({j_P + 1 + j_Q})  = \tfrac{1}{2}({i_P + j_P}) +  \tfrac{1}{2}({i_Q + j_Q})\\
 &=& {\sf sgn}(\la, \mu) + {\sf sgn}(\nu , \mu).
 \end{eqnarray*}
 
 Thus we can restrict our attention to the incontractible case as the general case will follow directly by applying the dilation homomorphism. 
So we can assume that $\mu=(c^r)$ is a rectangle for $r,c>1$ and $b(P)=1$ and $b(Q)=2$. 
For the $r=2=c$ case, $\la=(2,1)$, $\mu=(2^2)$, $\nu=(1)$  and we can choose $\alpha=(2) $ and $\beta=(1^2)$. 
Here we have that $\underline{\nu}\nu \overline{\mu} \cdot \underline{\mu}\la \overline{\la}$ is given by
$$    \begin{minipage}{2.4cm}
 % [inline block 35: 4 envs, 5319 chars -> data_tex | \begin{tikzpicture}  [xscale=0.5,yscale=0.7,scale=1.1] ...]
 \end{minipage}
$$ where the last two terms are equal to $\underline{\nu} \nu \overline{\alpha}\cdot \underline{\alpha}\alpha\overline{\la}$ and $\underline{\nu}\nu \overline{\beta}\cdot \underline{\beta} \beta \overline{\la}$ as required.

Now suppose $r,c>2$.  
We can safely ignore vertical strands on the left or right as they will not affect the multiplication.
In other words, we may assume in our pictures that $r=c$.
We have that $\la=(r^{r-1},r-1)$ and $\nu=(r^{r-2},r-1,r-2)$.
We set          $\alpha=(r^{r-1},r-2)$ and  $\beta= (r^{r-2},(r-1)^2)$.  
 Here we have that $\underline{\nu} \nu \overline{\mu}\cdot \underline{\mu}\la\overline{\la}$ is given by
 \begin{equation*} 
%  \underline{\la} \la  \overline{\mu}\cdot 
%    \underline{\mu} \nu  \overline{\nu}
%\;\;  =\;\;
   \begin{minipage}{5.75cm}
 % [inline block 36: 3 envs, 10050 chars -> data_tex | \begin{tikzpicture}  [xscale=0.5,yscale=0.7]  ...]
 \end{minipage}
 \end{equation*}% \begin{equation}\label{signdilate}
 with $r-3$  dotted anti-clockwise circles in each diagram;     the equality follows from  applying  the $1\otimes 1 \mapsto 1$ rule a total of $r$ times as required.
Note that the last product is $\underline{\nu} \nu \overline{\alpha}\cdot \underline{\alpha}\alpha\overline{\la}$ as required. It can be shown similarly that it is also equal to $\underline{\nu} \nu \overline{\beta}\cdot \underline{\beta}\beta\overline{\la}$.
The remaining cases (where $r=2$ and $c>2$ or vice versa) are analogous, except that we make use of the oriented strand rules in the surgery procedure.
  \end{proof}

     \begin{prop}[The adjacent relation]
Let $P\in {\rm DRem}(\mu)$ and $Q\in {\rm DRem}(\mu - P)$ be adjacent.
 Let $\langle P\cup Q \rangle_\mu$ denote the smallest removable Dyck path of $\mu$ containing $P\cup Q$ (if it exists), and set $\la=\mu-P$, $\nu=\la-Q$, and $\alpha=\mu-\langle P\cup Q \rangle_\mu$.
 Then we have 
%\begin{equation}\label{adjacent}
%D^{\mu - P - Q}_{\mu - P}D^{\mu - P}_\mu = \left\{ \begin{array}{ll} (-1)^{b(\langle P\cup Q\rangle_\mu) - b(Q)} D^{\mu - P - Q}_{\mu - \langle P\cup Q\rangle_\mu}D^{\mu - \langle P\cup Q\rangle_\mu}_\mu & \mbox{if $\langle P\cup Q\rangle_\mu$ exists} \\ 0 & \mbox{otherwise} \end{array} \right.
%\end{equation}
%
%
%
%
 \begin{equation}\label{thisoney2}
  i^{\sgnlamu+\sgnnula}
  \underline{\mu} \la  \overline{\la}
  \cdot  \underline{\la} \nu  \overline{\nu} 
  =
\begin{cases}
i^{2b(\langle P\cup Q\rangle_\mu) - 2b(Q)  
  +{\sf sgn}(\alpha , \mu)+{\sf sgn}(\alpha , \nu)
 } 
  \underline{\mu} \alpha  \overline{ \alpha}
  \cdot  \underline{ \alpha}  \alpha \overline{\nu} & \text{if $\langle P\cup Q \rangle_\mu$ exists,} \\
 0 & \text{otherwise.}
\end{cases}
\end{equation}
%if $\alpha=\mu-P \wedge Q$ is defined and 0 otherwise.

\end{prop}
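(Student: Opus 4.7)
The plan is to verify this identity by the same strategy used for the preceding relations: first check the sign-matching between the two sides as a combinatorial statement about content intervals, then reduce to the incontractible base case via the dilation homomorphism $\varphi_k$ of \cref{dilate-K}, and finally perform the surgery directly in the base case. Writing $P$ with content interval $[i_P, j_P]$ and $Q$ with interval $[i_Q, j_Q]$ adjacent to $P$ (say with $j_P + 1 = i_Q$), the content interval of $\langle P\cup Q\rangle_\mu$, when it exists, is either $[i_P - 1, j_Q]$ or $[i_P, j_Q + 1]$, with $b(\langle P\cup Q\rangle_\mu) = b(P) + b(Q)$; a short calculation of the four averages $\sgnlamu$, $\sgnnula$, ${\sf sgn}(\alpha, \mu)$, ${\sf sgn}(\alpha, \nu)$ in terms of these endpoints shows that the exponents of $i$ on the two sides of \eqref{thisoney2} differ by exactly $2(b(\langle P\cup Q\rangle_\mu) - b(Q)) \pmod 4$, absorbing the $(-1)^{b(\langle P\cup Q\rangle_\mu) - b(Q)}$ factor that appears in the Hecke-side relation \eqref{adjacent}. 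This reduces the identity to an assertion about equality (or vanishing) of un-signed products of arc diagrams.

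In the incontractible base case $\mu = (c^r)$ with $b(P) = 1$ and $b(Q) = 1$, the diagrams $\underline{\mu}\la\overline{\la}$ and $\underline{\la}\nu\overline{\nu}$ are each concentric anticlockwise circles with a single clockwise arc inserted (for $P$ and $Q$ respectively), similar in spirit to the brace pictures in Examples~\ref{brace-surgery} and~\ref{brace-surgery2}. When stacking and surgering, the concentric anticlockwise circles collapse via \cref{local-surg}, and the remaining surgery between the $P$-cap (in $\overline{\la}$) and the $Q$-cup (in $\underline{\la}$) is of type $1\otimes 1 \mapsto 1$ producing precisely $\underline{\mu}\alpha\overline{\alpha}\cdot \underline{\alpha}\alpha\overline{\nu}$ when $\langle P\cup Q\rangle_\mu$ exists; here the clockwise arc of the output encompasses the contents of $P$, $Q$, and the extra tile of the extension. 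When $\langle P\cup Q\rangle_\mu$ does not exist, the hypothetical extension crosses the boundary of $\mu$, so one of the endpoints of the $P$-cap or $Q$-cup sits on a propagating strand, and the surgery then involves a $y\otimes y$ merge with matching orientations, evaluating to~$0$.

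For general $(\la, \mu)$, including the base case with $b(Q) > 1$ and all contractible configurations, the argument proceeds by induction on $b(P) + b(Q) + b(\langle P\cup Q\rangle_\mu)$ using the dilation homomorphism $\varphi_k$: choose $k$ in the interior of $\underline{\sf cont}(P)$ or $\underline{\sf cont}(Q)$, or in the content of the one-tile difference $\langle P \cup Q\rangle_\mu \setminus (P \cup Q)$, as appropriate, apply the inductive hypothesis to the smaller pair $(\la', \mu')$, and lift through $\varphi_k$. As in the proofs of \cref{firstrel,almostthere}, the sign adjustments $\pm i$ introduced on each degree-one generator by $\varphi_k$ cancel consistently because the total sign change depends only on the parity of the number of factors whose underlying Dyck path contains $k$.

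The main obstacle is the sign bookkeeping in the incontractible base case: there are several sub-cases (according as $Q$ lies on the left or right of $P$, and according as the extension producing $\langle P\cup Q\rangle_\mu$ goes further left or further right), each of which requires a separate diagrammatic calculation with a separate sign tally. A secondary subtlety is verifying that the zero case really corresponds to the merge rule $y \otimes y \mapsto 0$ (two propagating strands with matching orientations) rather than the oriented rule $y\otimes y \mapsto y\otimes y$ --- that is, checking that the orientation of the boundary strand is consistent with the orientation forced by the rest of the diagram.
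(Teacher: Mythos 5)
Your overall strategy is the same as the paper's (match the $i$-exponents first, reduce to an incontractible rectangle case by dilation, evaluate the surgeries there, and get the vanishing case from $y\otimes y\mapsto 0$), but the combinatorial premise on which your sign check and your induction rest is false, and this is a genuine gap because the proposition is precisely a statement about these signs. You claim that $\langle P\cup Q\rangle_\mu$ has content interval $[i_P-1,j_Q]$ or $[i_P,j_Q+1]$ and breadth $b(P)+b(Q)$, i.e.\ that it exceeds $P\cup Q$ by a single content. In fact $\langle P\cup Q\rangle_\mu = Q'\sqcup P\sqcup Q$ where $Q'$ is a Dyck path lying entirely on the \emph{opposite} side of $P$ from $Q$ (so the second alternative for the interval never occurs), and $b(Q')$ can be arbitrarily large: starting from the rectangle base case and dilating at a content of the third tile $Q'$ enlarges $\langle P\cup Q\rangle_\mu$ without changing $P$ or $Q$. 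Hence $b(\langle P\cup Q\rangle_\mu)=b(P)+b(Q)+b(Q')-1$, not $b(P)+b(Q)$, and since ${\sf sgn}(\alpha,\nu)$ is the average of $\underline{\sf cont}(Q')$, your exponent computation only treats the special case $b(Q')=1$. The correct verification (as in the paper) takes $P$ with contents $[i_P,j_P]$, $Q=[j_P+1,j_Q]$, $Q'=[i_{Q'},i_P-1]$, so the cup of $\langle P\cup Q\rangle_\mu$ joins $i_{Q'}-\tfrac12$ and $j_Q+\tfrac12$, and one checks that $2b(\langle P\cup Q\rangle_\mu)-2b(Q)+\tfrac12(i_{Q'}+j_Q)+\tfrac12(i_{Q'}+i_P-1)$ equals $\sgnlamu+\sgnnula=\tfrac12(i_P+j_P)+\tfrac12(j_P+1+j_Q)$ exactly, the $i_{Q'}$-terms cancelling; your version of the calculation cannot see this cancellation. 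The same false premise infects your induction step ``dilate at the content of the one-tile difference'': the correct move is to dilate at any $k\in\underline{\sf cont}(Q')$, and the bookkeeping must track $b(Q')$.

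Two smaller points. First, once the exponents are matched in full generality, the reduction by dilation is for the \emph{unsigned} identity, and on the arc-algebra side $\varphi_k$ carries no signs at all (\cref{dilate-K}); your appeal to ``$\pm i$ adjustments cancelling by a parity count of factors containing $k$'' imports the Hecke-side dilation signs where none are needed, and as phrased it is not a proof. Second, your incontractible case ($\mu=(c^r)$, $b(P)=b(Q)=1$) and the treatment of the zero case via two like-oriented propagating strands do agree with the paper (there $Q'$ is indeed a single tile and $\langle P\cup Q\rangle_\mu$ has breadth $2$, existing exactly when $r\geq 2$), though note that the evaluation there is by repeated $1\otimes 1\mapsto 1$ surgeries applied to \emph{both} sides until they reach a common diagram, rather than the left-hand side collapsing directly onto the unevaluated right-hand product.
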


\begin{proof}
We start by proving that the signs on both sides of the equation are equal. Write $\langle P\cup Q\rangle_\mu = Q'\sqcup P \sqcup Q$ and assume that $Q'$ is to the left of $Q$. Then we have that the cup corresponding to $P$ connects $i_P - \tfrac{1}{2}$ and $j_P + \tfrac{1}{2}$, the cup corresponding to $Q$ connects $(j_P +1)-\tfrac{1}{2}$ and $j_Q +\tfrac{1}{2}$, the cup corresponding to $Q'$ connects $i_{Q'}-\tfrac{1}{2}$ and $(i_P-1) + \tfrac{1}{2}$ and finally the cup corresponding to $\langle P\cup Q\rangle_\mu$ connects $i_{Q'} - \tfrac{1}{2}$ and $j_Q + \tfrac{1}{2}$ for some $i_{Q'} < i_P \leq j_P < j_Q$. Note that 
$$b(\langle P\cup Q\rangle_\mu) =
\tfrac{1}{2}(
{j_Q - i_{Q'}})+1 \quad \text{and} \quad b(Q) = 
\tfrac{1}{2}({j_Q - (j_P +1)}) +1.$$
Thus we have 
\begin{eqnarray*}
&& 2b(\langle P\cup Q\rangle_\mu) - 2b(Q)  
  +{\sf sgn}(\alpha , \nu)+{\sf sgn}(\alpha , \nu) \\
  && = 2 \left( \frac{j_Q - i_{Q'}}{2} +1  \right) -2 \left(\frac{j_Q - (j_P +1)}{2} +1 \right) + \frac{i_Q' + j_Q}{2} + \frac{i_{Q'} + (i_P -1)}{2} \\ 
  && = \frac{j_P+i_P}{2} + \frac{j_Q + (j_P+1)}{2} \\
  && = {\sf sgn}(\la, \mu) + {\sf sgn}(\nu, \la)
  \end{eqnarray*}
  as required.
  Thus we can restrict our attention to the incontractible case as the general case will follow directly by applying the dilation homomorphism. 

So we can assume that $\mu = (c^r), \la = (c^{r-1}, c-1)$ and $\nu = (c^{r-1}, c-2)$ (the case $\nu = (c^{r-2}, (c-1)^2)$ is similar). Note that $\langle P\cup Q\rangle_\mu$ exists precisely when $r\geq 2$. When $r=1$ it's easy to check that $\underline{\mu}\la \overline{\la} \cdot \underline{\la} \nu \overline{\nu} =0$ using $y\otimes y \mapsto 0$ as we are applying surgery to two $\up$-oriented propagating strands. We now assume that $r\geq 2$. Then we have $\alpha = (c^{r-2}, c-1, c-2)$. In this case, for $r\geq 3$, the equation $\underline{\mu}\la \overline{\la} \cdot \underline{\la} \nu \overline{\nu} = \underline{\mu}\alpha \overline{\alpha} \cdot \underline{\alpha}\alpha \overline{\nu}$ becomes
 \begin{equation}\label{eqna21}
% \underline{\la} \mu  \overline{\mu}
% \cdot
%  \underline{\mu} \mu  \overline{\nu}
%\;\;=  \;  
%i\times 
  \begin{minipage}{5.5cm}
 % [inline block 37: 2 envs, 7619 chars -> data_tex | \begin{tikzpicture} % [xscale=0.76,yscale= 0.76]   [xscale=0.5,yscale=0.7] ...]
 \end{minipage} \end{equation}
 where there are $r-3$ concentric dotted outer circles  in the top and bottom diagrams on each side of  the product.  
  We can rewrite both sides of this equation   using the $1\otimes 1 \mapsto 1$ surgery procedure $r-3$ times on the dotted strands (trivially turning each pair  of dotted circles into a large dotted circle) followed by 3 times on the solid strands in order to obtain  in both cases the diagram
  \begin{equation}\label{eqna22}
%i \times   \underline{\la} \alpha \overline{\alpha} \cdot \underline{\alpha} \alpha \overline{\nu}
% \;\;=  \; 
 \begin{minipage}{6.5cm}
   % [inline block 38: 1 envs, 2270 chars -> data_tex | \begin{tikzpicture}%  [xscale=0.76,yscale= 0.76]   [xscale=0.5,yscale=0.7] ...]
 \end{minipage}
 \end{equation}as required.
 The other cases (where $r=2$ and $c>2$ or vice versa, and where $r=c=2$) are similar. 
  \end{proof}

We thus conclude that the map $\Psi$ is indeed a $\ZZ$-graded 
homomorphism of $\Bbbk$-algebras (for $\Bbbk$ an integral domain containing a square root of $-1$).  It remains to check that this map is a bijection, we now verify this by showing that the image of 
the light leaves  basis of $\mathcal{H}_{m,n}$ is a  basis of 
$\mathcal{K}_{m,n}$ (we do this by showing that every basis element of $\mathcal{K}_{m,n}$ can be written as a product of degree 1 elements).  
Over a field, we could deduce the following result from the known Koszulity of $\mathcal{K}_{m,n}$ (this is the main result of  \cite{MR2600694}).  However, we wish to work over more general rings (where Koszulity does not hold). From an aesthetic point of view, 
we also prefer to  deduce that the algebra is generated in degree 1  directly (as appealing to  the strong cohomological property of Koszulity is somewhat  
``using a sledgehammer to crack a nut").

\begin{prop}
The map $\Psi$ is an isomorphism of graded $\Bbbk$-algebras. \end{prop}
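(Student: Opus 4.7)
The plan is to deduce bijectivity of $\Psi$ by exhibiting explicit bases on both sides with matching dimensions in each graded piece, and showing that $\Psi$ sends one basis to a unit-rescaled version of the other. By \cref{heere ris the basus} the set $\{D^\mu_\la D^\la_\nu : (\la,\mu), (\la,\nu) \text{ Dyck pairs}\}$ is a graded cellular basis of $\mathcal{H}_{m,n}$, while by definition $\mathcal{K}_{m,n}$ has a basis of arc diagrams $\{\underline{\mu}\la\overline{\nu} : \underline{\mu}\la \text{ and } \la\overline{\nu} \text{ oriented}\}$. By \cref{TLDyck} these two index sets coincide, and \cref{corDyckpairs} shows the gradings agree. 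So it suffices to prove that $\Psi(D^\mu_\la D^\la_\nu)$ is a unit multiple of $\underline{\mu}\la\overline{\nu}$ for each such triple.

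The heart of the argument is the inductive identity
$$\Psi(D^\mu_\la) \;=\; c(\la,\mu)\,\underline{\mu}\la\overline{\la}, \qquad c(\la,\mu) \in \Bbbk^{\times},$$
for every Dyck pair $(\la,\mu)$, proved by induction on $k = \deg(\underline{\mu}\la)$. The cases $k=0,1$ hold by definition of $\Psi$. For $k\ge 2$, one selects $P$ appearing in the Dyck tiling of $\mu\setminus\la$ which is not covered by any other tile, so that $D^\mu_\la = D^\mu_{\mu-P} D^{\mu-P}_\la$ (as in the proof of \cref{generatorsarewhatweasay}). Applying the homomorphism $\Psi$ gives a product
$$\Psi(D^\mu_\la) \;=\; i^{\sgn(\mu-P,\mu)}\,c(\la,\mu-P)\,\underline{\mu}(\mu-P)\overline{\mu-P} \cdot \underline{\mu-P}\la\overline{\la},$$
and the arc-diagram product is evaluated by the Khovanov surgery procedure: the caps of $\overline{\mu-P}$ glue to the mirror cups of $\underline{\mu-P}$ to form closed circles. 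Each such circle corresponds either to a cup of $\underline{\mu-P}$ which also appears in $\underline{\mu}$ (and is then oriented anticlockwise by the middle weight $\la$) or to a cup which has been merged/contracted with the outer structure of $\underline{\mu}$. In the first case, \cref{local-surg} removes the circle as a local idempotent; in the second, one obtains exactly the cup-diagram $\underline{\mu}$ restricted over $\la$. This collapses the product to $c'\,\underline{\mu}\la\overline{\la}$ for some unit $c'$.

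With the inductive identity in hand, compute
$$\Psi(D^\mu_\la D^\la_\nu) \;=\; c(\la,\mu)\,c(\la,\nu)\; \underline{\mu}\la\overline{\la}\cdot\underline{\la}\la\overline{\nu} \;=\; c(\la,\mu)\,c(\la,\nu)\,\underline{\mu}\la\overline{\nu},$$
where the last equality follows because $\overline{\la}\cdot\underline{\la}$ glues to a configuration of \emph{anticlockwise} circles (the middle weights on both sides are $\la$), which by \cref{local-surg} act as local idempotents and disappear. Thus $\Psi$ carries the light leaves basis of $\mathcal{H}_{m,n}$ bijectively to a scalar-unit rescaling of the standard basis of $\mathcal{K}_{m,n}$, proving that $\Psi$ is an isomorphism of graded $\Bbbk$-algebras.

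The main obstacle is justifying the inductive step's surgery calculation in full generality, especially when $P$ is covered by or non-commuting with other Dyck paths of the tiling; in such cases, orientations of intermediate circles and signs introduced by the surgery rules $1\mapsto 1\otimes x + x\otimes 1$ versus $1\otimes 1\mapsto 1$ must be tracked. However, the dilation homomorphism from \cref{dilate-K} (together with its $\mathcal{H}$-counterpart) reduces every configuration to the incontractible case, where the explicit surgery computations performed in \cref{firstrel,almostthere} apply verbatim; the resulting sign bookkeeping is precisely absorbed by the factor $i^{\sgn(\la,\mu)}$ appearing in the definition of $\Psi$.
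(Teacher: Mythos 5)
There is a genuine gap, and it sits at the heart of your argument: the claim that $\underline{\mu}\la\overline{\la}\cdot\underline{\la}\la\overline{\nu}=\underline{\mu}\la\overline{\nu}$ on the nose (and, upstream of it, that $\Psi$ sends each light leaves element to a \emph{unit multiple} of a single arc diagram) is false. \cref{local-surg} only lets you delete anticlockwise circles that meet the weight lines in \emph{exactly two} points; in the glued diagram $\underline{\mu}\la\overline{\la}\,\underline{\la}\la\overline{\nu}$ the cups of $\underline{\la}$ and caps of $\overline{\la}$ are usually parts of larger components built out of arcs of $\underline{\mu}$ and $\overline{\nu}$, and when a surgery cup--cap pair lies on a single anticlockwise circle the splitting rule $1\mapsto 1\otimes x+x\otimes 1$ fires and produces a second term with strictly smaller middle weight. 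Concretely, take $m=n=2$, $\mu=\nu=(2,2)$, $\la=(2,1)$: the first surgery merges the two big anticlockwise circles ($1\otimes 1\mapsto 1$), and the second surgery splits the resulting circle, giving
$$\underline{(2,2)}(2,1)\overline{(2,1)}\cdot\underline{(2,1)}(2,1)\overline{(2,2)}\;=\;\underline{(2,2)}(2,1)\overline{(2,2)}\;+\;\underline{(2,2)}(1)\overline{(2,2)},$$
with a genuine lower term of middle weight $(1)\subsetneq(2,1)$. So the light leaves basis is \emph{not} carried to a rescaled diagram basis, and your "matching bases" conclusion does not follow as stated. The same issue undermines the inductive identity $\Psi(D^\mu_\la)=c(\la,\mu)\underline{\mu}\la\overline{\la}$: the surgery evaluation of $\underline{\mu}(\mu-P)\overline{\mu-P}\cdot\underline{\mu-P}\la\overline{\la}$ is asserted, not proved, and your proposed rescue via dilation does not apply -- incontractibility is a notion for degree-one pairs only, and the computations you cite verify the defining relations of $\mathcal{H}_{m,n}$ rather than evaluating $\Psi$ on higher-degree light leaves elements.

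What can be salvaged is the weaker, and correct, statement: the product equals $\underline{\mu}\la\overline{\nu}$ \emph{plus} a sum of terms $\underline{\mu}\zeta\overline{\nu}$ with $\zeta<\la$, with leading coefficient exactly $1$. Bijectivity then follows by unitriangularity, once one also shows that every $\underline{\mu}\la\overline{\la}$ lies in the image of $\Psi$ (e.g.\ as a product of degree-one elements along a chain $\mu\supset\la^{(1)}\supset\dots\supset\la$, where \cref{local-surg} genuinely applies) and that in the evaluation of $\underline{\mu}\la\overline{\la}\cdot\underline{\la}\la\overline{\nu}$ none of the annihilating rules $x\otimes x$, $x\otimes y$, $y\otimes y\mapsto 0$ can ever occur. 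Establishing these two facts -- in particular the case-by-case control of the surgery rules guaranteeing the coefficient-one leading term and the non-vanishing -- is the real work that your proposal skips; it cannot be waved away by the local idempotent lemma.
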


\begin{proof}
 We have already seen  above 
 that the map is a $\Bbbk$-algebra homomorphism. 
 We need to show that the map is bijective. %: we will do this in two stages. 
Clearly the map is bijective when one restricts attention  to the degree 0 elements 
(indexed by weights/partitions) and degree 1 elements 
(indexed by Dyck pairs of degree 1) 
of both algebras. 
We now consider elements of higher degree: we will do this in two stages.  
We first show that every element of the form $\underline{\mu}\la\overline{\la}$ is in the image, by constructing it as a product of the degree 1 elements.
We will then show that 
\begin{align}\label{profoftheform}
(\underline{\mu}\la\overline{\la})
(\underline{\la}\la\overline{\nu})
=
\underline{\mu}\la\overline{\nu}+\sum_{\zeta < \la} 
\underline{\mu}\zeta \overline{\nu}
\end{align}
and so deduce that the map is bijective by unitriangularity.

\smallskip
\noindent
{\bf Step 1:}
 We fix a sequence 
 $\mu =\la^{(0)}\supset \la^{(1)}\supset \dots\supset \la^{(m)} =\la$
 such that the Dyck path $\la^{(k-1)} = \la^{(k)} - P^k$ where $P^k$
  is a removable Dyck path of
  $ \la^{(k)}$  of  breadth $p_k$ with $p_0\geq p_1\geq p_2 \dots \geq p_m$ (this decreasing size condition is not strictly necessary, but is helpful for visualisation).
We have that 
$$
(\underline{\mu}\la \overline{\la})=
(\underline{\mu}\mu \overline{\la^{(1)}})
(\underline{\la^{(1)}}\la^{(1)} \overline{\la^{(2)}})
\dots 
(\underline{\la^{(m-1)}}\la^{(m-1)} \overline{\la} )
$$
by repeated  application of \cref{local-surg}.

\smallskip
\noindent
{\bf Step 2:}
 We first observe that the subdiagram
 $\la\overline{\la} 
 \underline{\la}\la$ within the wider diagram 
 $(\underline{\mu}\la\overline{\la})
(\underline{\la}\la\overline{\nu})$
is an oriented Temperley--Lieb diagram whose arcs are all 
anti-clockwise oriented and which is invariant under the flip through the horizontal axis. 
%We will now consider 
We will %consider each % of the possible cases of the surgery procedure 
 work through the  cases of the  surgery rules of \cref{Khovanov  arc algebras}
 and how they can potentially be applied to the
anti-clockwise arcs in  $\la\overline{\la} 
 \underline{\la}\la$  (within the wider diagram  $(\underline{\mu}\la\overline{\la})
(\underline{\la}\la\overline{\nu})$) in turn: 
we will show that none of 
$x \otimes x \mapsto 0$, 
$x \otimes y \mapsto 0$, or 
$y \otimes y \mapsto 0$  can occur 
(thus $(\underline{\mu}\la\overline{\la})
(\underline{\la}\la\overline{\nu})\neq0$)
and that in all the other cases
there is a single term with weight $\la$ (as required).
%
%
%The surgery procedure is not concerned with the local orientation of 
%an arc (which is always anti-clockwise in our proof), but rather the 
%global orientation of the circle/strand to which the arc belongs. 
%Determining a global orientation of clockwise/anti-clockwise 
%is  equivalent to assigning an ``inside"/``outside" label to regions of the diagram in a manner we shall make more precise (case-by-case) and
%this will allow us to show that certain cases cannot occur (for  topological reasons).  
%
%We first consider the rules in which our pair of arcs belong to the same 
%global component. 

We first require some notation.  
Let  $\la\in \mptn$
and   fix  $i<j\in \ZZ+\tfrac{1}{2}$  connected by a cup in $\underline{\la}$, we denote this cup by $\underline{c}%=\overline{AB}
 \in \underline{\la}$ and similarly   
 we denote the reflected cap by $\overline{c}%=\underline{AB}
 \in \overline{\la}$.
 We will also need to speak of the interval $\underline{I}$ on the top weight line
 (respectively $\overline{I}$ on the bottom weight line) 
  lying strictly 
 between the points $i<j\in \ZZ+\tfrac{1}{2}$.
  The surgery procedure is not concerned with the local orientation of 
the  arcs  
$\underline{c}$ and $\overline{c}$ 
 (which are always anti-clockwise in our proof), but rather the 
global orientation of the circle/strand to which the arcs $\underline{c}$ and $\overline{c}$  belong. 
Determining a global orientation of clockwise/anti-clockwise 
is  equivalent to assigning an ``inside"/``outside" label to the
 regions $\underline{I}$  and $\overline{I}$  
 in a manner we shall make more precise (case-by-case) and
this will allow us to show that certain cases cannot occur (for  topological reasons).  
We will 
assume that all our diagrams  in this proof have the    minimum number of propagating lines --- this does not affect the surgery procedure, which is defined topologically, but allows us to speak of the `left'  and `right'  propagating lines in a given circle.  
   This is illustrated in \cref{cup-cap}.

\begin{figure}[ht!]
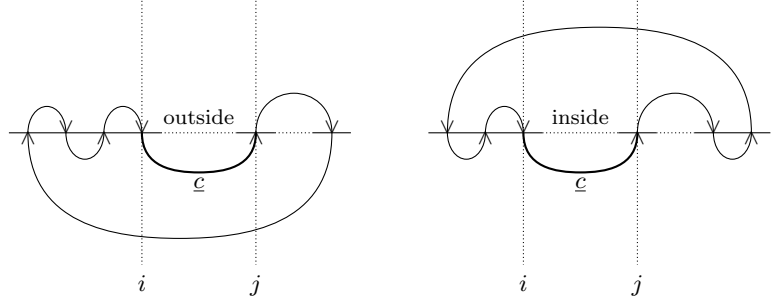

  \begin{equation*} 
 \quad   \begin{minipage}{4.75cm}
 % [inline block 39: 2 envs, 4188 chars -> data_tex | \begin{tikzpicture}  [xscale=0.5,yscale=0.7] ...]
 \end{minipage}
 \end{equation*} 
\caption{ On the left we picture an anti-clockwise oriented cup $\underline{c}$ as part of a wider 
clockwise oriented circle, notice that the region $\underline{I}$ is outside of the circle.
On the right we picture an anti-clockwise oriented cup $\underline{c}$ as part of a wider 
anti-clockwise oriented circle, notice that the region $\underline{I}$ is inside of the circle.
Similar pictures can be drawn for the caps.}
\label{cup-cap}
\end{figure}

We first consider the rules in which our pair of arcs belong to the same 
connected component (either a circle or a strand).

 If the arcs  $\underline{c}$ and $\overline{c}$  both belong to the same  clockwise oriented circle, then
the regions $\underline{I}$  and $\overline{I}$  both lie {\em outside} this circle. 
 If the arcs  $\underline{c}$ and $\overline{c}$  both belong to the same anti-clockwise oriented circle, then
the regions $\underline{I}$  and $\overline{I}$ both lie {\em inside} this circle.
This is depicted in \cref{clocker}.
  Applying the surgery procedure in the former case, we obtain two non-nested  clockwise oriented circles.  The rightmost propagating strand of one circle goes through the point $i$ (which was $\down$-oriented in $\la$) 
  and the leftmost propagating strand of the other circle goes through the point $j$ (which was $\up$-oriented in $\la$) and so   $\la$ is preserved.  
  Applying the surgery procedure in the latter case, we obtain two nested  circles   
  (and sum over the $1\otimes x$ and $x\otimes 1$  orientations). 
  In the rightmost case in \cref{clocker}, the $\la$ weight corresponds to orienting the inner circle clockwise, see \cref{clocker2} (the leftmost  case in \cref{clocker} corresponds to orienting the inner circle anti-clockwise).
  %(summing over the two distinct 
% possible orientations,   the original weight $\la$ and the ``other" weight which is lower in the Bruhat ordering).  

\begin{figure}[ht!]
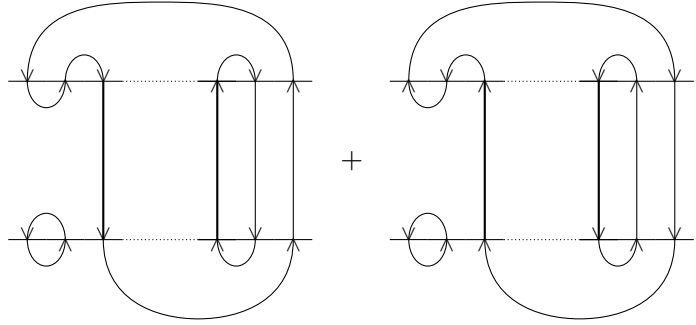

  \begin{equation*} 
 \quad   \begin{minipage}{4.75cm}
 % [inline block 40: 5 envs, 22863 chars -> data_tex | \begin{tikzpicture}  [xscale=0.5,yscale=0.7] ...]
 \end{minipage}
     \caption{The effect of applying the surgery to the rightmost diagram in \cref{clocker}. 
The first term has a clockwise inner circle and an outer anti-clockwise circle and its weight is equal to $\la$. The second term has the opposite conventions and the weight $\zeta<\la$.
     }
     \label{clocker2}
     \end{figure}

We now suppose that the arcs  $\underline{c}$ and $\overline{c}$  both belong to the same strand. 
In which case,   the terminating points of this strand  are either both less than or equal to $i$ or both greater than or equal to $j$. 
We claim that the surgery 
$y \mapsto x\otimes y$ does not change the weight $\la$.  
 Applying the surgery in the former (respectively latter) case, 
we obtain  a circle whose leftmost intersection with the weight lines is at   the point  $j$ 
and a strand  whose rightmost intersection with the weight lines is at   the point   $i$
(respectively a circle whose rightmost intersection with the weight lines is at   the point  $i$ 
and a strand  whose leftmost intersection with the weight lines is at   the point   $j$).
A circle whose leftmost point is $\up $-oriented (respectively rightmost point is $\down $-oriented) is necessarily   clockwise oriented, and so the claim follows.

Next we consider the rules in which our pair of arcs belong to the distinct 
connected components (each of which is either a circle or a strand).

 The $1\otimes 1 \mapsto 1$, $ x\otimes 1 \mapsto x$,  and $1\otimes x \mapsto x$ rules can be checked in a similar fashion to above.  In the case of 
 $1\otimes 1 \mapsto 1$ the original diagram consists of two non-nested circles (see \cref{11>1}); there are two propagating strands in the circle produced by surgery
  and the orientation of the circle can be determined via the left/right propagating strand and checked to match $\la$ by the fact that this  left/right propagating strand passes through $i$ or $j$ with label $\down$ or $\up$ respectively. 
 In the case of  $ x\otimes 1 \mapsto x$ or $1\otimes x \mapsto x$,  the original diagram consists of two nested circles (see \cref{1x>x}); there are four propagating strands in the circle produced by surgery
  and the orientation of the circle can be determined via the leftmost/rightmost propagating strand and checked to match $\la$ by the fact that this  leftmost (respectively rightmost) propagating strand has the sign  $\up$ (respectively $\down$) given by the {\em opposite direction} to that encountered at  $i$  (respectively at  $j$).

\begin{figure}[ht!]
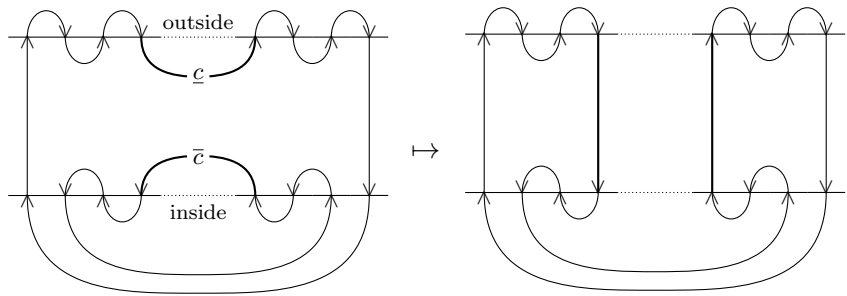

  \begin{equation*} 
   \begin{minipage}{4.1cm}
 % [inline block 41: 4 envs, 15388 chars -> data_tex | \begin{tikzpicture}  [xscale=0.5,yscale=0.7] ...]
 \end{minipage}
       \end{equation*}
       \caption{The effect of applying the $1\otimes x \mapsto x$ surgery.}
       \label{1x>x}
       \end{figure}

 The $x \otimes x \mapsto 0$ case   is of a different flavour entirely.   We must show that we {\em never} have to apply this rule in the simplification of a product of the form above (in \cref{profoftheform}). To see this, note that the intervals $\underline{I}$ and $\overline{I}$ must both lie outside of their circles: however, 
this implies that both circles contain propagating lines and (by the planarity condition on arc diagrams) each circle must nest inside the other (as they cannot intersect), which provides  a contradiction. 
Similarly the $x\otimes y \mapsto 0$ case (also the $y \otimes y \mapsto 0$ case) gives rise to an intersecting circle and strand, which is a contradiction.

 The merging rules involving a strand can be checked in a similar fashion.
  \end{proof}

%\appendix
               \bibliographystyle{amsalpha}   
\bibliography{master}

 \end{document}